\newtheorem{theorem}{Theorem}[section]
\newtheorem{lemma}[theorem]{Lemma}
\newtheorem{definition}[theorem]{Definition}
\newtheorem{corollary}[theorem]{Corollary}
\newtheorem{proposition}[theorem]{Proposition}
\newtheorem{question}[theorem]{Question}
\newtheorem{remark}[theorem]{Remark}
\title[The fractional $p\,$-biharmonic systems]{The fractional $p\,$-biharmonic systems: optimal Poincaré constants, unique continuation and inverse problems}
\keywords{Fractional Laplacian, $p$\,-biharmonic operator, Poincaré inequality, unique continuation, inverse problems}
\subjclass[2020]{Primary 35R30; secondary 26A33, 42B37, 46F12}
\author{Manas Kar}
\thanks{Indian Institute of Science Education and Research (IISER) Bhopal (\href{mailto:manas@iiserb.ac.in}{manas@iiserb.ac.in})}
\address{Indian Institute of Science Education and Research (IISER) Bhopal, India}
\email{manas@iiserb.ac.in}
\author{Jesse Railo}
\thanks{Department of Pure Mathematics and Mathematical Statistics, University of
Cambridge (\href{mailto:jr891@cam.ac.uk}{jr891@cam.ac.uk})}
\address{Department of Pure Mathematics and Mathematical Statistics, University of
Cambridge, Cambridge CB3 0WB, UK}
\email{jr891@cam.ac.uk}
\author{Philipp Zimmermann}
\thanks{Department of Mathematics, ETH Zurich (\href{mailto:philipp.zimmermann@math.ethz.ch}{philipp.zimmermann@math.ethz.ch})}
\address{Department of Mathematics, ETH Zurich, Z\"urich, Switzerland}
\email{philipp.zimmermann@math.ethz.ch}
\date{\today}
\newcommand{\C}{{\mathbb C}}
\newcommand{\R}{{\mathbb R}}
\newcommand{\K}{{\mathbb K}}
\newcommand{\Z}{{\mathbb Z}}
\newcommand{\N}{{\mathbb N}}
\newcommand{\schwartz}{\mathscr{S}}
\newcommand{\tempered}{\mathscr{S}^{\prime}}
\newcommand{\fraclaplace}{(-\Delta)^s}
\newcommand{\fourier}{\mathcal{F}}
\newcommand{\ifourier}{\mathcal{F}^{-1}}
\newcommand{\vev}[1]{\left\langle#1\right\rangle}
\newcommand{\norm}[1]{\lVert #1 \rVert}
\newcommand{\abs}[1]{\left\lvert #1 \right\rvert}
\DeclareMathOperator{\Div}{div} 
\DeclareMathOperator{\Nabla}{\overline{\nabla}} 
\DeclareMathOperator{\DivH}{\overline{div}} 
\DeclareMathOperator{\Lap}{\overline{\Delta}} 
\DeclareMathOperator{\supp}{supp} 
\newcommand\scalemath[2]{\scalebox{#1}{\mbox{\ensuremath{\displaystyle #2}}}} 
\begin{document}

\maketitle
\begin{abstract} This article investigates nonlocal, fully nonlinear generalizations of the classical biharmonic operator $(-\Delta)^2$. These fractional $p$\,-biharmonic operators appear naturally in the variational characterization of the optimal fractional Poincaré constants in Bessel potential spaces. We study the following basic questions for anisotropic fractional $p$\,-biharmonic systems: existence and uniqueness of weak solutions to the associated interior source and exterior value problems, unique continuation properties (UCP), monotonicity relations, and inverse problems for the exterior Dirichlet-to-Neumann maps. Furthermore, we show the UCP for the fractional Laplacian in \emph{all} Bessel potential spaces $H^{t,p}$ for any $t\in \R$, $1 \leq p < \infty$ and $s \in \R_+ \setminus \N$: If $u\in H^{t,p}(\R^n)$ satisfies $(-\Delta)^su=u=0$ in a nonempty open set $V$, then $u\equiv 0$ in $\R^n$. This property of the fractional Laplacian is then used to obtain a UCP for the fractional $p$\,-biharmonic systems and plays a central role in the analysis of the associated inverse problems. Our proofs use variational methods and the Caffarelli--Silvestre extension.
\end{abstract}

\setcounter{tocdepth}{1}
\tableofcontents

\section{Introduction}
\label{intro}

The classical $p$\,-biharmonic operator is given by
\begin{equation}
\label{eq: p biharmonic operator}
    (-\Delta)_p^2u = \Delta(|\Delta u|^{p-2}\Delta u)
\end{equation}
 which is a nonlinear, elliptic, fourth order generalization of the well-known biharmonic operator $(-\Delta)^2$. A general boundary value problem for the $p$\,-biharmonic operator could then be formulated as follows: Find a function $u\colon \Omega\to \R$ solving
 \begin{equation}
 \label{eq: boundary value problem p biharm eq}
     \begin{split}
    (-\Delta)_p^2u &= f(x,u)\quad \text{in}\quad \Omega, \\
    B_ju&=g_j\quad\quad \quad \,\text{on}\quad \partial\Omega
\end{split}
 \end{equation} 
 for $j=1,2$, where $\Omega\subset\R^n$ is some domain with sufficiently smooth boundary, $f\colon \Omega\times \R\to\R$ a possibly nonlinear function, $B_j$, $j=1,2$, are some boundary operators and $g_1,g_2\colon\partial\Omega\to \R$ are given boundary data. Typical boundary conditions which have been studied in the existing literature are the Navier boundary conditions $u=g_1,\,\Delta u=g_2$ on $\partial\Omega$, the Dirichlet boundary conditions $u=g_1,\,\partial_{\nu}u=g_2$ on $\partial\Omega$, where $\nu$ denotes the unit outer normal to $\partial\Omega$, or combinations of them which are called mixed Dirichlet--Navier boundary conditions. The regularity properties of biharmonic functions, that is solutions to  \eqref{eq: boundary value problem p biharm eq} with $f\equiv 0$, spectral properties of biharmonic operators, variational formulations and unique continuation principles have been studied extensively for $p=2$ (see e.g.~the articles \cite{Gazzola} for well-posedness, regularity properties, \cite{SpectraBiharm, Spectrum-Biharm} for spectral properties and \cite{ Colombini:Grammatico:1999,colombini:Koch:2010, Lin:Sei:Wang:2011} for the strong unique continuation properties for the fourth order elliptic equation). Moreover, in the article \cite{CaffBiharmObstacle} Caffarelli and Friedman studied the obstacle problem for the biharmonic operator. In recent years, many of these results have been extended to the case $p\neq 2$. For example in the works \cite{Spectrum-p-Biharm, Talbi2007OnTS} the authors analyzed the spectrum of the $p$\,-biharmonic operator and showed that the eigenvalue problem associated to \eqref{eq: p biharmonic operator}, namely problem \eqref{eq: boundary value problem p biharm eq} with $f(x,u)=\lambda |u|^{p-2}u$ and homogeneous Navier boundary conditions, have a simple, isolated least positive eigenvalue $\lambda_+>0$.  

This work is devoted to the study of the \emph{anisotropic fractional $p$\,-biharmonic operators}
\begin{equation}\label{eq:p-biharmonic operator}
    (-\Delta)_{p,A}^s u \vcentcolon = (-\Delta)^{s/2}(|A^{1/2}(-\Delta)^{s/2} u|^{p-2}A(-\Delta)^{s/2}u)
\end{equation}
where $1 < p < \infty$, $s > 0$ and $A\in L^{\infty}(\R^n;\R^{m\times m})$ is a symmetric, positive definite, uniformly elliptic matrix-valued function. We will simply call the operator \eqref{eq:p-biharmonic operator} as the \emph{fractional $p$-biharmonic operator} when $A=\mathbf{1}_m$ and often restrict to the truly fractional cases $s \in \R_+ \setminus \Z$ or $s \in \R_+ \setminus 2\Z$. The associated exterior value problem takes the form
\begin{equation}
    \label{eq: exterior value problem}
        \begin{split}
            (-\Delta)^s_{p,A}u&=f(x,u)\quad \text{in}\quad \Omega,\\
            u&=g\quad\quad \quad\,\,\, \text{in}\quad \Omega_e
        \end{split}
    \end{equation}
where $\Omega_e\vcentcolon = \R^n\setminus\overline{\Omega}$ is the exerior of $\Omega$, $f\colon\R^n\times\R\to\R$ is a possibly nonlinear function or $f\in(\widetilde{H}^{s,p}(\Omega;\R^m))^*$ (see Section~\ref{sec: preliminaries}), which models an interior source, and $g\in H^{s,p}(\R^n;\R^m)$ is the prescribed exterior value of $u$. As we will see later, in the cases $f\equiv 0$ (pure exterior value problem) or $f\in (\widetilde{H}^{s,p}(\Omega;\R^m))^*$ and $g\equiv 0$ (pure interior source problem), the solutions $u$ can be obtained by minimizing a related energy functional (called $p$\,-energy). The considered energy functional is similar to the one considered in the work \cite{RegularityHarmonicMaps} but there $s$ is fixed to the critical value $s=n/p$, $A=\mathbf{1}_m$ and the authors considered functions $u$ taking values in a closed Riemannian manifold $N\subset\R^m$.

Next, we describe our main contributions (the detailed discussion is given in Section~\ref{sec: main results}) and the structure of this article. We introduce the basic notation and functional setting used in this work in Section~\ref{sec: preliminaries}. We start in Section~\ref{sec: varioational char} by showing that the fractional $p$\,-biharmonic operator \eqref{eq:p-biharmonic operator} studied in this work naturally appears in the variational characterization of the optimal fractional Poincaré constants in Bessel potential spaces. In fact, we prove that there is a function $u\in H^{s,p}(\R^n)$ whose $p$\,-energy coincides with $C^{-p}_*$, where $C_*$ is the optimal Poincar\'e constant, and it solves \eqref{eq: exterior value problem} with $f(x,u)=\lambda|u|^{p-2}u$ for some $\lambda>0$.
In Section~\ref{sec: existence theory}, we establish the existence and uniqueness of weak solutions to the (anisotropic) fractional $p$\,-biharmonic systems \eqref{eq: exterior value problem} in the two mentioned limiting cases of pure exterior values and interior sources. We define the exterior Dirichlet-to-Neumann (DN) maps related to the anisotropic fractional $p$\,-biharmonic operators in Section~\ref{subsec: trace space and DN maps}. We study unique continuation properties (UCP) of these nonlinear, nonlocal operators, in Section~\ref{sec: UCP}. Finally, in Section~\ref{sec: inverse problems main}, we establish monotonicity relations for the fractional $p$\,-biharmonic operators and uniqueness results for the related exterior data inverse problems in the presence of monotonicity assumptions for certain conformal coefficients of a priori known anisotropy.

\subsection*{Acknowledgements} The authors are grateful to Giovanni Covi, María Ángeles García-Ferrero and Angkana Rüland for helpful discussions on the UCP for fractional Laplacians and suggesting a proof of Theorem~\ref{UCP} for $0 < s < 1$, which we partially follow. The authors thank Shiqi Ma, Mikko Salo and Joaquim Serra for helpful discussion related to this article. M.K. was supported by MATRICS grant (MTR/2019/001349) of SERB. J.R. was supported by the Vilho, Yrjö and Kalle Väisälä Foundation of the Finnish Academy of Science and Letters.

\section{Main results of the article and comparison to the literature}
\label{sec: main results}

In this section, we state and discuss the main results obtained in this work. We also briefly compare our results to the existing literature on the way. We refer to the following books on the basics of the fractional Laplacian, fractional Sobolev spaces and their applications \cite{Singular-Integrals-Stein,Triebel-Theory-of-function-spaces}.

\subsection{On the optimal fractional Poincar\'e constants} For any bounded open set $\Omega \subset \R^n$, $1 < p <\infty$ and $s \geq 0$ there exists a constant $C(n,p,s,\Omega) > 0$ such that
\begin{equation}\label{eq:basicPoincareIntro}
    \norm{u}_{L^p(\Omega)} \leq C\norm{(-\Delta)^{s/2}u}_{L^p(\R^n)}
\end{equation}
for all $u \in C_c^\infty(\Omega)$ (see e.g. \cite[Lemma 3.3]{ARS21-OnFractVersionMurat} or \cite[Lemma 5.4]{RZ2022unboundedFracCald}). Later on we will refer to \eqref{eq:basicPoincareIntro} as the fractional Poincaré inequality. Note that we do not require any boundary regularity of the domain $\Omega$, which is similar as for example in the classical Sobolev embedding theorem for $W^{k,p}_0(\Omega)$-functions (see e.g.~\cite[Theorem~A.5]{Variational-Methods}) and hence for the Poincar\'e inequality in these spaces. We establish a variational characterization of the optimal fractional Poincaré constant in \eqref{eq:basicPoincareIntro} when $1 < p < \infty$ and $s > 0$. This characterization is directly related to the fractional $p$\,-biharmonic operator given in \eqref{eq:p-biharmonic operator} and in part motivates to investigate properties of the fractional $p$\,-biharmonic operators and their other relations with the fractional Laplacians. Further references on \eqref{eq:basicPoincareIntro} and the higher order fractional Laplacians can be found in \cite{RZ2022unboundedFracCald}.

One important application of the fractional Poincar\'e inequalities is to show well-posedness results for certain nonlocal partial differential equations (PDEs). More precisely, these inequalities allow to obtain coercivity estimates for the weak formulations of some nonlocal operators which together with the Lax--Milgram theorem prove existence of unique solutions (see e.g.~\cite{DirichletNonlocalOperators,RZ2022unboundedFracCald,RosOton16-NonlocEllipticSurvey}). Moreover, we point out that the constant in the stability estimates of the obtained unique solutions via the Lax--Milgram theorem depend linearly on the Poincar\'e constant which further motivates the study of the optimal fractional Poincar\'e constants. 

The standard examples of nonlocal PDEs are the uniformly elliptic integro-differential operators which have the form
\[
    Lu(x)\vcentcolon = p.v.\int_{\R^n}(u(x)-u(y))K(x-y)\,dy,
\]
where the kernel $K\colon\R^n\to \R$ satisfies
\[
    K(x)\geq 0,\quad K(-x)=K(x),\quad \frac{\lambda}{|x|^{n+2s}}\leq K(x)\leq\frac{\Lambda}{|x|^{n+2s}}
\]
for all $x\in\R^n$ with $0<\lambda<\Lambda$, $0<s<1$ and $p.v.$ stands for the Cauchy principal value. These operators naturally show up as the infinitesimal generators of stable L\'evy processes or more precisely the associated semigroups. A particular simple and well-behaved uniformly elliptic integro-differential operator is the fractional Laplacian $(-\Delta)^s$ for $0<s<1$, which corresponds to a stable, radially symmetric L\'evy process, and its higher order generalization to $s\in \R_+$ (see e.g.~\cite{DirichletNonlocalOperators,RosOton16-NonlocEllipticSurvey}). The cases when $p \neq 2$ appear naturally in the studies of nonlinear PDEs and the standard example is the fractional $p$\,-Laplacian which is usually defined as
\begin{equation}\label{eq: fracpLapDef}
    (-\Delta_p)^su(x)\vcentcolon =C_1p.v.\int_{\R^n}\frac{|u(x)-u(y)|^{p-2}(u(x)-u(y))}{|x-y|^{n+sp}}\,dy
\end{equation}
for $0<s<1$ and some normalizing constant $C_1>0$. The constant $C_1$ can be chosen in such a way that $(-\Delta_p)^su\to -\Delta_pu$ as $s\uparrow 1$ and $(-\Delta)^s_pu\to (-\Delta)^su$ as $p\downarrow 2$ for sufficiently smooth functions $u$, where $\Delta_p$ denotes the $p$\,-Laplacian defined by $\Delta_p u\vcentcolon = \text{div}(|\nabla u|^{p-2}\nabla u)$ (see e.g.~\cite{ThreeRep} 
and references therein). The fractional Poincar\'e inequality, the Sobolev embedding theorem and inequalities for closely related operators have been studied extensively in the literature (see e.g. \cite{PBL18-Opt-Prob-First-Eigenvalues-pFracLap,DirichletNonlocalOperators,HYZ12-FracGaglNirenHardy,LL14--Fractional-eigenvalues,MS21-Best-frac-p-Poincare-unboundedDoms}).

For any $1< p<\infty$, $s\geq 0$ and an open bounded set $\Omega\subset\R^n$, we define the set
\[
    \mathcal{M}_p=\{u\in\widetilde{H}^{s,p}(\Omega);\,\|u\|_{L^p(\R^n)}=1\}\subset \widetilde{H}^{s,p}(\Omega)
\] 
and the energy functional
\[
     \mathcal{E}_p\colon\mathcal{M}_p\to \R_+,\quad \mathcal{E}_p(u)=\int_{\R^n}|(-\Delta)^{s/2}u|^p\,dx.
\]
We have obtained the following result on the optimal fractional Poincar\'e constants:
\begin{theorem}
\label{Theorem: variational characterization of fractional poincare constant}
    Let $1< p<\infty$, $s> 0$ and $\Omega\subset\R^n$ be an open bounded set and denote by $C_*=C_*(n,p,s,\Omega)>0$ the optimal fractional Poincar\'e constant. Then the following statements hold:
    \begin{enumerate}[(i)]
        \item\label{item 1 var char} The constant $C_*$ satisfies
        \[
            C_*^{-p}=\inf_{v\in\mathcal{M}_p}\mathcal{E}_p(v).
        \]
        \item\label{item 2 var char} There exists a minimizer $u\in\mathcal{M}_p$ with $\lambda_{1,s,p}\vcentcolon =\mathcal{E}_p(u)>0$. 
        \item\label{item 3 var char} Any minimizer $u\in \mathcal{M}_p$ solves the following Euler--Lagrange equation
        \begin{equation}
        \label{eq: EL eq}
              \int_{\R^n}|(-\Delta)^{s/2}u|^{p-2}(-\Delta)^{s/2}u (-\Delta)^{s/2}v\,dx=\lambda_{1,s,p}\int_{\R^n}|u|^{p-2}uv\,dx
        \end{equation}
        for all $v\in \widetilde{H}^{s,p}(\Omega)$.
        \item\label{item 4 var char} If $0\neq v\in\widetilde{H}^{s,p}(\Omega)$, $\mu\in\C$ satisfy
        \begin{equation}
        \label{eq: identity 4}
             \int_{\R^n}|(-\Delta)^{s/2}v|^{p-2}(-\Delta)^{s/2}v (-\Delta)^{s/2}w\,dx=\mu\int_{\R^n}|v|^{p-2}vw\,dx
        \end{equation}
        for all $w\in C_c^{\infty}(\Omega)$, then $\mu \in \R$ and there holds $\mu\geq \lambda_{1,s,p}$.
    \end{enumerate}
\end{theorem}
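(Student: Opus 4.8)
The plan is to establish the four items in order, each by a standard calculus-of-variations argument, the compactness step in (ii) being the only real subtlety. For (i), I would first upgrade the Poincar\'e inequality \eqref{eq:basicPoincareIntro} from $C_c^\infty(\Omega)$ to all of $\widetilde{H}^{s,p}(\Omega)$ by density, using $\norm{u}_{L^p(\R^n)}=\norm{u}_{L^p(\Omega)}$ on test functions together with the continuity of $H^{s,p}(\R^n)\hookrightarrow L^p(\R^n)$ and of $(-\Delta)^{s/2}\colon H^{s,p}(\R^n)\to L^p(\R^n)$. This identifies $C_*$ with the supremum of the degree-zero homogeneous quotient $\norm{u}_{L^p(\R^n)}/\norm{(-\Delta)^{s/2}u}_{L^p(\R^n)}$ over nonzero $u\in\widetilde{H}^{s,p}(\Omega)$; restricting the supremum to the scale-normalized set $\mathcal{M}_p$ (on which the denominator cannot vanish, again by \eqref{eq:basicPoincareIntro}) and inverting yields $C_*^{-p}=\inf_{v\in\mathcal{M}_p}\mathcal{E}_p(v)$. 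This infimum is automatically finite and strictly positive since $0<C_*<\infty$.

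For (ii), I would use the direct method. From a minimizing sequence $(u_k)\subset\mathcal{M}_p$, the energy bound and $\norm{u_k}_{L^p(\R^n)}=1$ give a bound in $\widetilde{H}^{s,p}(\Omega)$, which is reflexive for $1<p<\infty$ (a closed subspace of the reflexive space $H^{s,p}(\R^n)$), so a subsequence converges weakly to some $u$. The crucial point — and the step I expect to be the main obstacle — is that $\mathcal{M}_p$ is a sphere in $L^p$ and hence \emph{not} weakly closed in $\widetilde{H}^{s,p}(\Omega)$; to recover $u\in\mathcal{M}_p$ I would invoke the compact embedding $\widetilde{H}^{s,p}(\Omega)\hookrightarrow L^p(\R^n)$ valid for bounded $\Omega$ (from the preliminaries), which turns the weak convergence into strong $L^p(\R^n)$ convergence and so preserves the normalization. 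Weak lower semicontinuity of $\mathcal{E}_p$ (it is convex and continuous, being a power of an $L^p$-norm composed with the bounded linear map $(-\Delta)^{s/2}$) then forces $u$ to be a minimizer, and (i) gives $\lambda_{1,s,p}=\mathcal{E}_p(u)=C_*^{-p}>0$.

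For (iii), I would use the Lagrange-multiplier idea directly: given a minimizer $u$ and a direction $v\in\widetilde{H}^{s,p}(\Omega)$, the curve $t\mapsto (u+tv)/\norm{u+tv}_{L^p(\R^n)}$ lies in $\mathcal{M}_p$ for small $|t|$ (since $\norm{u+tv}_{L^p(\R^n)}\to 1$) and its energy is minimized at $t=0$; differentiating at $t=0$ and using $\norm{u}_{L^p(\R^n)}=1$ produces \eqref{eq: EL eq}, and the choice $v=u$ identifies the multiplier as $\mathcal{E}_p(u)=\lambda_{1,s,p}$. The only technical care needed is differentiation under the integral sign for $\mathcal{E}_p(u+tv)$ and $\norm{u+tv}_{L^p(\R^n)}^p$, which follows from the $C^1$-regularity of $x\mapsto|x|^p$ with derivative $p|x|^{p-2}x$ and the domination $|a+tb|^{p-1}|b|\le(|a|+|b|)^{p-1}|b|\in L^1$ for $|t|\le 1$ (H\"older, since $(|a|+|b|)^{p-1}\in L^{p/(p-1)}(\R^n)$ when $a,b\in L^p(\R^n)$).

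Finally, for (iv) I would extend \eqref{eq: identity 4} from $w\in C_c^\infty(\Omega)$ to all of $\widetilde{H}^{s,p}(\Omega)$ by density — both sides are bounded linear functionals of $w$, because $|(-\Delta)^{s/2}v|^{p-2}(-\Delta)^{s/2}v$ and $|v|^{p-2}v$ belong to $L^{p/(p-1)}(\R^n)$ by H\"older and $(-\Delta)^{s/2}$ is bounded into $L^p(\R^n)$ — and then test with $w=\overline{v}\in\widetilde{H}^{s,p}(\Omega)$, using that $(-\Delta)^{s/2}$ commutes with complex conjugation, to obtain $\mathcal{E}_p(v)=\mu\norm{v}_{L^p(\R^n)}^p$. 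Since $v\neq 0$, this forces $\mu=\mathcal{E}_p(v)/\norm{v}_{L^p(\R^n)}^p$ to be real and nonnegative, and by $p$-homogeneity of $\mathcal{E}_p$ together with $v/\norm{v}_{L^p(\R^n)}\in\mathcal{M}_p$ we conclude $\mu=\mathcal{E}_p\big(v/\norm{v}_{L^p(\R^n)}\big)\ge\inf_{\mathcal{M}_p}\mathcal{E}_p=\lambda_{1,s,p}$ by (i)–(ii).
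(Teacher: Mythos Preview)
Your proposal is correct and follows essentially the same route as the paper: part (i) is immediate, (ii) is the direct method on the reflexive space $\widetilde{H}^{s,p}(\Omega)$ with the Rellich--Kondrachov compact embedding supplying the $L^p$-normalization, (iii) differentiates the normalized curve $t\mapsto (u+tv)/\|u+tv\|_{L^p}$ at $t=0$, and (iv) tests with $\bar v$ and rescales. The only place the paper does noticeably more bookkeeping is in (iii), where it carefully separates real and imaginary parts to pass from the identity with $\mathrm{Re}$ to the complex-bilinear form of \eqref{eq: EL eq}; your sketch is compatible with this but glosses over it.
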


Theorem \ref{Theorem: variational characterization of fractional poincare constant} is completely analogous to the well-known classical result, which connects the optimal Poincar\'e constant in $\|u\|_{L^p(\Omega)}\leq C\|\nabla u\|_{L^p(\Omega)}$ and the $p$\,-Laplace operator $\Delta_p$ (see e.g.~ \cite{MR1968344,Esposito:Nitsch:Trombetti:2013,EigenPLap2,EigenPLap}). We remark that the fractional $p$\,-Laplacian \eqref{eq: fracpLapDef} also shows up similarly in the variational characterization of the optimal Poincar\'e constants in Slobodeckij--Gagliardo spaces $W^{s,p}(\Omega)$ (see e.g. \cite[Section~3]{MR3264796} and \cite{MS21-Best-frac-p-Poincare-unboundedDoms}). In the setting of Slobodeckij--Gagliardo spaces, higher order eigenvalues of the fractional $p$\,-Laplacians and fractional capacities of sets are also studied recently (see e.g.~\cite{Brasco:Parini:2016,FracpLaplacian, DELPEZZO2020111479}). The authors are not aware of similar studies for the Bessel potential seminorms (i.e. $L^p$ norms of the fractional Laplacians) or for the fractional $p$\,-biharmonic operators. In part, these connections and analogies make it tempting to study the properties of the fractional $p$\,-biharmonic operators further.

\subsection{On the unique continuation properties of the fractional Laplacians and $p$\,-biharmonic systems} Unique continuation properties for (elliptic) operators have a long history \cite{A57, AKS62,C39, KT08} and dates back at least to Riesz~\cite{RI-liouville-riemann-integrals-potentials}. Roughly speaking this principle states that any solution of an elliptic equation that vanishes in an open set must be identically zero. It has several applications in inverse problems, control theory and existence theory for PDEs. There are various methods to prove the unique continuation principles for elliptic problems. One could recall Holmgren's uniqueness theorem to obtain the UCP for elliptic PDEs involving real analytic coefficients, see \cite{John:1982}. There are also certain inequalities, such as the doubling inequalities, three sphere inequalities, frequency function methods and Carleman estimates, that can be used to establish the UCP for elliptic equations for less regular coeffiecients, see e.g.~\cite{Lerner:2019}.
However, the method of Carleman estimates has great importance in proving the UCP as well as solving several inverse problems (see e.g.~\cite{DSFKSU09,Is07,KSU07,SU87}).
In this article, we study similar properties for the fractional Laplacians and fractional $p$\,-biharmonic operators. We have proved the following UCP result for the fractional Laplacian in all Bessel potential spaces, excluding the end point $p=\infty$:

\begin{theorem}[UCP]
\label{UCP}
    Let $1\leq p<\infty$, $s\in \R_+\setminus\N$ and $r\in\R$. If $u\in H^{r,p}(\R^n)$ satisfies $(-\Delta)^su=u=0$ in a nonempty open set $V$, then $u\equiv 0$ in $\R^n$.
\end{theorem}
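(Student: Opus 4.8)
\emph{Overview.} I would prove the theorem in three stages: reduce to the base case $0<s<1$ using the locality of integer powers of $-\Delta$; reduce to $L^p$-regular data by mollification; and then run the Caffarelli--Silvestre extension argument, the only point requiring care being that it must survive the passage from $L^2$ to $L^p$. For the first stage, write $s=m+\sigma$ with $m\in\N_0$ and $\sigma\in(0,1)$ (possible since $s\notin\N$) and set $v\vcentcolon=(-\Delta)^m u\in H^{r-2m,p}(\R^n)$. Since $(-\Delta)^m$ is a differential operator of order $2m$, it is local, so $u=0$ in $V$ gives $v=0$ in $V$, and $(-\Delta)^\sigma v=(-\Delta)^s u=0$ in $V$. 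Granting the theorem for exponents in $(0,1)$, this yields $v\equiv0$, i.e.\ $|\xi|^{2m}\widehat u=0$ in $\tempered$, so $\supp\widehat u\subset\{0\}$ and $u$ is a polynomial; since $(1-\Delta)^{r/2}$ maps polynomials to polynomials, $(1-\Delta)^{r/2}u\in L^p(\R^n)$ is a polynomial, hence zero because $p<\infty$, and therefore $u\equiv0$. So it suffices to treat $0<s<1$.

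\emph{Reduction to smooth, bounded data.} For $0<s<1$ and $u\in H^{r,p}(\R^n)$ I would mollify: with $\rho_\eps$ a standard mollifier and $u_\eps\vcentcolon=u\ast\rho_\eps=\psi_\eps\ast((1-\Delta)^{r/2}u)$, where $\psi_\eps\vcentcolon=(1-\Delta)^{-r/2}\rho_\eps\in\schwartz$ (a smooth symbol of moderate growth applied to a Schwartz function), one gets $u_\eps\in C^\infty(\R^n)\cap L^p(\R^n)\cap L^\infty(\R^n)$, while $u_\eps=(-\Delta)^s u_\eps=0$ on the open set $V_\eps\vcentcolon=\{x:\overline{B_\eps(x)}\subset V\}$, which is nonempty for $\eps$ small. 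As $u_\eps\to u$ in $H^{r,p}(\R^n)$, it is enough to show $u_\eps\equiv0$ for small $\eps$, so I may assume $u\in C^\infty(\R^n)\cap L^p(\R^n)\cap L^\infty(\R^n)$ with $u=(-\Delta)^s u=0$ on a nonempty open set $V$.

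\emph{Caffarelli--Silvestre extension and boundary unique continuation.} Let $w(x,y)\vcentcolon=(P_s(\cdot,y)\ast u)(x)$ be the Caffarelli--Silvestre extension, where the Poisson kernel $P_s(\cdot,y)=c_{n,s}\,y^{2s}(|\cdot|^2+y^2)^{-(n+2s)/2}$ lies in $L^1(\R^n)\cap L^\infty(\R^n)\subset L^{p'}(\R^n)$. Then $w$ is well defined for $u\in L^p(\R^n)$, belongs to $C^\infty(\R^{n+1}_+)$, solves $\Div(y^{1-2s}\nabla_{x,y}w)=0$ in $\R^{n+1}_+$, satisfies $w(\cdot,y)\to u$ in $L^p(\R^n)$ as $y\to0^+$, and has weighted Neumann trace $-c_s\lim_{y\to0^+}y^{1-2s}\partial_y w(\cdot,y)=(-\Delta)^s u$. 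On a ball $V_0\Subset V$ both the Dirichlet and the weighted Neumann traces vanish, and since $u$ is smooth near $V$ the extension $w$ is regular up to $V_0\times\{0\}$ in the weighted sense by Caffarelli--Silvestre boundary regularity, so $w$ has vanishing Cauchy data on $V_0\times\{0\}$. The unique continuation principle from the boundary for the degenerate elliptic operator $\Div(y^{1-2s}\nabla\,\cdot\,)$, which follows from R\"uland's Carleman estimate (and underpins the known $p=2$ result), then forces $w\equiv0$ in a half-ball over $V_0$; as $w$ is real-analytic in the connected set $\R^{n+1}_+$ (the weight $y^{1-2s}$ being analytic there), it vanishes identically, and passing to the trace at $y=0$ gives $u\equiv0$.

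\emph{Main obstacle.} The hard part will be matching the $L^p$-framework with the weighted $L^2$-Sobolev setting in which R\"uland's Carleman estimate is formulated --- i.e.\ verifying that $w\in H^1(B_R^+;y^{1-2s}\,dx\,dy)$ near $V_0\times\{0\}$ and that it genuinely attains the stated Cauchy data there. The reduction to smooth, bounded $u$ is precisely what removes this difficulty: the extension is then classically (weighted-)regular near the boundary and R\"uland's estimate, being local over $V$, applies directly; the only essentially global ingredient, namely well-definedness of $w$ on all of $\R^{n+1}_+$, is supplied by $P_s(\cdot,y)\in L^{p'}(\R^n)$.
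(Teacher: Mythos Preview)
Your proposal is correct and follows essentially the same strategy as the paper: reduce to $0<s<1$ via locality of integer powers of $-\Delta$, mollify to smooth bounded data, and then apply the Caffarelli--Silvestre extension together with R\"uland's boundary Carleman estimate and interior real-analyticity. The only differences are cosmetic---the paper performs the two reductions in the opposite order and spells out the verification that the extension lies in $H^1_{\mathrm{loc}}(\overline{\R^{n+1}_+},y^{1-2s})$ (using $y^{1-2s}\partial_y U\in L^\infty$ and $\nabla u\in L^p$), which you correctly flag as the main technical point and correctly attribute to the mollification step.
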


This settles an open problem in \cite[Question 7.1]{CMR20} and extends the result \cite[Theorem~1.2 and Corollary~3.5]{CMR20} from $1 \leq p \leq 2$ to the missing cases $2 < p < \infty$. The proof strategy is similar to \cite[Theorem~1.2]{GSU20}. The higher order cases are proved by an iteration argument with the local operators $(-\Delta)^k$, $k\in\N$, as in \cite[Theorem~1.2]{CMR20}. In particular, the proof uses the Carleman estimates of R\"uland \cite[Proposition~2.2]{Ru15} for the Caffarelli--Silvestre (CS) extension \cite{CS-nonlinera-equations-fractional-laplacians,CS-extension-problem-fractional-laplacian}. However, we need to use additional $L^p$ estimates and make a specific localization argument for the extension problem. The proof of Theorem~\ref{UCP} is presented in Section \ref{sec: UCP}.

Unique continuation properties for the fractional Laplacian and related nonlocal operators have been extensively studied in recent years. We summarize some of these results next. There are strong unique continuation results for $0<s<1$ when one assumes higher regularity of the function \cite{FF-unique-continuation-fractional-ellliptic-equations, Ru15}. In the strong UCP, one replaces the condition $u|_V=0$ by the requirement that $u$ vanishes to infinite order at some point $x_0\in V$. The higher order case $s\in\R^+\setminus \Z$, $s>1$, has been studied recently by several authors \cite{FF-unique-continuation-higher-laplacian, GR-fractional-laplacian-strong-unique-continuation, YA-higher-order-laplacian}. These results however assume some special conditions on the function~$u$, i.e.~they require that $u$ is in a $L^2$ Sobolev space which depends on the power~$s$ of the fractional Laplacian $\fraclaplace$. We also point the interested reader to the work \cite{KR-all-functions-are-s-harmonic} where the author proves a higher order Runge approximation property by $s$-harmonic functions $u$ in the unit ball $B_1$ when $s\in\R^+\setminus\Z$. In the range $0<s<1$, this result has already been established in \cite{DSV-all-functions-are-s-harmonic}. Similar higher regularity approximation results are proved in the article \cite{GSU20} for the fractional Schr\"odinger equation. 
The UCP when $p=2$ and the closely related Runge approximation have been applied in several nonlocal inverse problems to obtain uniqueness results (see e.g. \cite{CRZ22,GRSU-fractional-calderon-single-measurement,GSU20,RZ2022unboundedFracCald}). Another interesting application of the UCP comes from computed tomography \cite{CMR20, IM-unique-continuation-riesz-potential}
as the Riesz potentials (i.e.~the inverses of fractional Laplacians) naturally appear after the so called backprojections in different tomographies.

We denote by $\mathbb{S}_+^m$ the class of functions $A\in L^{\infty}(\R^n;\R^{m\times m})$ taking values in the set of symmetric, positive definite matrices and satisfying the uniform ellipticity condition
    \begin{equation}
    \label{eq: ellipticity intro}
        \lambda^2|v|^2\leq \langle Av,v\rangle\leq \Lambda^2 |v|^2\quad\text{a.e. in}\quad \R^n
    \end{equation}
    for all $v\in\R^m$ and a pair of real numbers $0<\lambda<\Lambda$. The anisotropic fractional $p$\,-biharmonic operator $(-\Delta)^s_{p,A}$ is given weakly by
    \[
        \langle (-\Delta)^s_{p,A} u,v\rangle=\int_{\R^n}|A^{1/2}(-\Delta)^{s/2}u|^{p-2}A(-\Delta)^{s/2}u\cdot (-\Delta)^{s/2}v\,dx
    \]
    for all $u,v\in H^{s,p}(\R^n;\R^m)$ and maps $H^{s,p}(\R^n;\R^m)$ to $(H^{s,p}(\R^n;\R^m))^*$. Using Theorem \ref{UCP}, we are able to prove the following UCP result for the anisotropic fractional $p$\,-biharmonic systems:
\begin{theorem}[UCP for the anisotropic fractional $p$\,-biharmonic operator]
\label{thm: UCP}
    Let $m\in\N$, $1 < p<\infty$, $s> 0$ with $s\notin 2\N$ and $A\in \mathbb{S}_+^m$. Assume that $\Omega\subset\R^n$ is an open set, let $u_1,u_2\in H^{s,p}(\R^n;\R^m)$ and define the functions $v_i\in L^{p'}(\R^n;\R^m)$ by
    \[
        v_i\vcentcolon = |A^{1/2}(-\Delta)^{s/2}u_i|^{p-2}A(-\Delta)^{s/2}u_i
    \]
    for $i=1,2$. If there holds
    \[
        (-\Delta)^s_{p,A} u_1=(-\Delta)^s_{p,A} u_2\quad\text{and}\quad v_1=v_2\quad \text{in}\quad\Omega,
    \]
    then $u_1\equiv u_2$ in $\R^n$.
\end{theorem}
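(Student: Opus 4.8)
The plan is to reduce everything to Theorem~\ref{UCP} applied with the exponent $s/2$ in place of $s$. The key structural observation is that the anisotropic fractional $p$\,-biharmonic operator factors as $(-\Delta)^{s/2}$ composed with a pointwise nonlinearity, so the hypothesis $(-\Delta)^s_{p,A}u_1=(-\Delta)^s_{p,A}u_2$ in $\Omega$ becomes a fractional Laplace equation for the field $w\vcentcolon = v_1-v_2\in L^{p'}(\R^n;\R^m)$, while the hypothesis $v_1=v_2$ in $\Omega$ is exactly $w=0$ in $\Omega$.

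Concretely, I would first unfold the weak formulation: for $\phi\in C_c^\infty(\Omega;\R^m)$ the definition of $(-\Delta)^s_{p,A}$ gives $\langle(-\Delta)^s_{p,A}u_i,\phi\rangle=\int_{\R^n}v_i\cdot(-\Delta)^{s/2}\phi\,dx$, so the assumption $(-\Delta)^s_{p,A}u_1=(-\Delta)^s_{p,A}u_2$ in $\Omega$ reads $\int_{\R^n}w\cdot(-\Delta)^{s/2}\phi\,dx=0$ for all such $\phi$. Since $(-\Delta)^{s/2}$ is self-adjoint with respect to the $L^{p'}$--$L^p$ pairing and $w\in L^{p'}(\R^n;\R^m)=H^{0,p'}(\R^n;\R^m)$ componentwise, this says precisely that $(-\Delta)^{s/2}w=0$ in $\Omega$ in the distributional sense. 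Applying Theorem~\ref{UCP} to each component of $w$ with the parameters $r=0$, with $p$ replaced by $p'$ and with $s$ replaced by $s/2$ — here the hypothesis $s\notin 2\N$ is exactly what makes $s/2\in\R_+\setminus\N$, so the theorem applies — yields $w\equiv 0$, i.e.\ $v_1=v_2$ on all of $\R^n$.

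Next I would invert the pointwise nonlinearity. Writing $\Phi_A(\xi)\vcentcolon =|A^{1/2}\xi|^{p-2}A\xi$ and substituting $\eta=A^{1/2}\xi$ gives $\Phi_A(\xi)=A^{1/2}(|\eta|^{p-2}\eta)$, so $\Phi_A=A^{1/2}\circ\psi_p\circ A^{1/2}$ with $\psi_p(\eta)\vcentcolon =|\eta|^{p-2}\eta$. The map $\psi_p$ is injective on $\R^m$ for $1<p<\infty$ (if $\psi_p(\eta)=\psi_p(\zeta)$, taking moduli gives $|\eta|=|\zeta|$, and then $\eta=\zeta$), and $A^{1/2}(x)$ is invertible for a.e.\ $x$ since $A\in\mathbb{S}_+^m$; hence $\Phi_A(x;\cdot)$ is injective for a.e.\ $x$. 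As $v_i=\Phi_A(\cdot\,;(-\Delta)^{s/2}u_i)$ with $(-\Delta)^{s/2}u_i\in L^p(\R^n;\R^m)$, the equality $v_1=v_2$ forces $(-\Delta)^{s/2}u_1=(-\Delta)^{s/2}u_2$ a.e.\ in $\R^n$, that is $(-\Delta)^{s/2}(u_1-u_2)=0$ in $\R^n$.

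Finally I would conclude with a Liouville-type argument: $u_1-u_2\in H^{s,p}(\R^n;\R^m)$ with $s>0$ embeds into $L^p(\R^n;\R^m)$, and on the Fourier side $(-\Delta)^{s/2}(u_1-u_2)=0$ forces $|\xi|^s\,\widehat{u_1-u_2}=0$, so $\widehat{u_1-u_2}$ is supported at the origin and $u_1-u_2$ is a (vector-valued) polynomial; since the only polynomial in $L^p(\R^n)$ with $p<\infty$ is zero, we obtain $u_1\equiv u_2$. The step that needs genuine care — rather than being a real obstacle — is the first one: one must check that the weak equality of the biharmonic operators in $\Omega$ is exactly the distributional statement $(-\Delta)^{s/2}w=0$ in $\Omega$ required by Theorem~\ref{UCP} (this is the self-adjoint duality bookkeeping between $L^{p'}$ and test functions, together with identifying $w$ as an element of a Bessel potential space of the type covered by that theorem), and that the standing hypothesis $s\notin 2\N$ indeed supplies the condition $s/2\notin\N$ there.
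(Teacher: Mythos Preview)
Your argument is correct and follows the same three-stage skeleton as the paper: first use the UCP for the fractional Laplacian (Theorem~\ref{UCP}) on $w=v_1-v_2\in L^{p'}$ to upgrade $v_1=v_2$ from $\Omega$ to all of $\R^n$; then pass from $v_1=v_2$ to $(-\Delta)^{s/2}u_1=(-\Delta)^{s/2}u_2$; finally run the Fourier/polynomial Liouville argument. The one genuine difference is in the middle step: you invert the nonlinearity pointwise by factoring $\Phi_A=A^{1/2}\circ\psi_p\circ A^{1/2}$ and noting each factor is injective, whereas the paper instead appeals to the integrated monotonicity estimates of Lemma~\ref{lemma: Auxiliary lemma} (splitting into $p\geq 2$ and $1<p<2$) to force $\|(-\Delta)^{s/2}(u_1-u_2)\|_{L^p}=0$. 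Your route is shorter and avoids the case split; the paper's route has the advantage of reusing the same inequalities that drive uniqueness in Section~\ref{sec: existence theory}. You also invoke Theorem~\ref{UCP} uniformly in $p'$ rather than citing \cite{CMR20} separately for $p'\leq 2$, and you handle the final step by the support-at-the-origin argument in all cases rather than separating off $sp<n$ via Hardy--Littlewood--Sobolev; both simplifications are harmless.
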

Theorem \ref{thm: UCP} is proved in Section \ref{sec: UCP}. See also Corollary \ref{cor: special cases UCP} for some simpler special cases, and Proposition \ref{prop: measurable UCP} for a measurable UCP with some additional restrictions on the paramaters. We use Theorem \ref{thm: UCP} to show uniqueness in our inverse problems.

\subsection{The exterior data inverse problem and monotonicity relations} 

Ghosh, Salo and Uhlmann showed in \cite{GSU20} that partial exterior DN data associated with the fractional Schrödinger equation of order $0 < s < 1$
\begin{equation}
\begin{split}\label{eq:fracSchördinger}
    ((-\Delta)^s+q)u &= 0\quad \text{in}\quad \Omega, \\
    u&=f\quad \text{in}\quad \Omega_e
\end{split}
\end{equation}
determines uniquely the potential $q \in L^\infty(\Omega)$. The typical solution to the inverse problem is based on the Runge approximation property for the forward model, which follows from the unique continuation principle of the fractional Laplacian and a nonconstructive Hahn--Banach argument. One may determine the potential $q$ from a single measurement \cite{GRSU-fractional-calderon-single-measurement,R-singular-measurement} and the inverse problem is exponentially instable \cite{RS-fractional-calderon-low-regularity-stability,RS17d}. Generalizations of the model problem \eqref{eq:fracSchördinger} have been studied extensively in the literature in the elliptic cases \cite{Covi:2021,covietal2021calderon-directionally-antilocal,CMRU20-higher-order-fracCald,ghosh2021calderon,LL-fractional-semilinear-problems,RZ2022unboundedFracCald,RS-fractional-calderon-low-regularity-stability} and the inverse problem is known to be uniquely solvable for local perturbations of any fixed nonlocal operator with the UCP whenever the forward problem is well-posed \cite{RZ2022unboundedFracCald}. There is also a comprehensive literature considering inverse problems for time-dependent equations with nonlocality, these examples include time-fractional, space-fractional and spacetime-fractional equations \cite{Banerjee:Krishnan:Senapati:2022, Helin:Lassas:Ylinen:Zhang:2020,Kian:Liu:Yamamoto:2022,Kow:Lin:Wang:2022, Lai:Lin:Ruland:2020}.
Inverse problems for nonlocal operators such as the fractional conductivity equation, fractional powers of elliptic operators and fractional spectral Laplacians have been recently studied in \cite{CRZ22,feizmohammadiEtAl2021fractional,ghosh2021calderon, RZ2022counterexamples}. 
More references can be found from the surveys \cite{Sal17, Yamamoto:2022}
and the aforementioned works.

Let $m\in\N$, $1< p<\infty$, $s> 0$ and $A\in \mathbb{S}_+^m$. If $\Omega\subset\R^n$ is an open bounded set and $f\in H^{s,p}(\R^n;\R^m)$, then by Theorem \ref{thm: Homogeneous fractional p-Laplace equation} there exists a unique weak solution $u\in H^{s,p}(\R^n;\R^m)$ to the exterior value problem
    \begin{equation}
    \label{eq: PDE exterior condition intro}
        \begin{split}
            (-\Delta)^s_{p,A}u&=0,\quad \text{in}\quad \Omega,\\
            u&=f,\quad \text{in}\quad \Omega_e.
        \end{split}
    \end{equation}
We define the so called abstract trace space as the quotient $X_p=\nicefrac{H^{s,p}(\R^n;\R^m)}{\widetilde{H}^{s,p}(\Omega;\R^m)}$. We then define the exterior DN map $\Lambda_{p,A}\colon X_p\to X_p^*$ associated with \eqref{eq: PDE exterior condition intro} by 
    \[
        \langle\Lambda_{p,A}(f),g\rangle=\mathcal{A}_{p,A}(u_f,g)
    \]
 for all $f,g\in X_p$, where $u_f$ is the unique weak solution to the homogeneous fractional $p$\,-biharmonic system \eqref{eq: PDE exterior condition intro} and $\mathcal{A}_{p,A}\colon H^{s,p}(\R^n;\R^m)\times H^{s,p}(\R^n;\R^m)\to \R$ is defined as
    \[
        \mathcal{A}_{p,A}(u,v)=\int_{\R^n}|A^{1/2}(-\Delta)^{s/2}u|^{p-2}A(-\Delta)^{s/2}u\cdot (-\Delta)^{s/2}v\,dx
    \]
for all $u,v\in H^{s,p}(\R^n;\R^m)$. More details are given in Sections~\ref{sec: existence theory} and \ref{subsec: trace space and DN maps}. Given a conformal factor $\sigma \in L^\infty(\R^n)$ with $\sigma(x) \geq \sigma_0 > 0$ and a fixed anisotropy $A\in \mathbb{S}_+^m$, we shortly write $\Lambda_{\sigma} = \Lambda_{p,\sigma^{2/p}A}$ (cf. Section~\ref{sec: inverse problems main}). Our main theorem on the related inverse problem is the following single measurement result:

\begin{theorem}\label{main_theorem}
Let $1< p<\infty$ and $s>0$ with $s\notin 2\N$. Suppose that $W\subset \Omega_e$ and $D \subset \R^n$ are given nonempty open sets. Let $\sigma_1, \sigma_2 \in L^{\infty}(\R^n)$ satisfy $\sigma_1(x),\sigma_2(x)\geq \sigma_0>0$ and $\sigma_1\geq \sigma_2$ in $\R^n$. Moreover, suppose that $\sigma_1$ is lower semicontinuous and $\sigma_2$ upper semicontinuous in $D$. If $\Lambda_{\sigma_1}u_0|_W = \Lambda_{\sigma_2}u_0|_W$ holds for some nonzero $u_0\in C_c^{\infty}(W;\R^m)$, then $\sigma_1 = \sigma_2$ in $D \setminus W$.
\end{theorem}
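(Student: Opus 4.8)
The plan is to derive the result from a monotonicity inequality for the exterior DN maps $\Lambda_\sigma$ combined with the unique continuation Theorems~\ref{UCP} and~\ref{thm: UCP}. First I would set up the variational side. For a conformal factor $\sigma$ with $\sigma\ge\sigma_0>0$ one has $(\sigma^{2/p}A)^{1/2}=\sigma^{1/p}A^{1/2}$, hence $|(\sigma^{2/p}A)^{1/2}\xi|^p=\sigma\,|A^{1/2}\xi|^p$ and $\sigma^{2/p}A\in\mathbb{S}_+^m$. Given exterior datum $f$, let $u_f$ be the unique weak solution of the homogeneous problem~\eqref{eq: PDE exterior condition intro} with coefficient $\sigma^{2/p}A$ (Theorem~\ref{thm: Homogeneous fractional p-Laplace equation}). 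Decomposing $f=u_f+(f-u_f)$ with $f-u_f\in\widetilde H^{s,p}(\Omega;\R^m)$ and using the weak formulation to discard $\mathcal{A}_{p,\sigma^{2/p}A}(u_f,f-u_f)=0$, I would record
\[
\langle\Lambda_\sigma(f),f\rangle=\mathcal{A}_{p,\sigma^{2/p}A}(u_f,u_f)=\int_{\R^n}\sigma\,|A^{1/2}(-\Delta)^{s/2}u_f|^p\,dx ,
\]
together with the fact (strict convexity of the $p$\,-energy, cf.~Section~\ref{sec: existence theory}) that $u_f$ minimizes $v\mapsto\int_{\R^n}\sigma|A^{1/2}(-\Delta)^{s/2}v|^p\,dx$ among all $v$ with $v=f$ in $\Omega_e$. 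From here the monotonicity inequality is immediate: if $\sigma_1\ge\sigma_2\ge\sigma_0$ and $u_i$ is the $\Lambda_{\sigma_i}$\,-solution with the common datum $f$, then bounding the $\sigma_2$\,-energy from above by the competitor $u_1$ gives
\[
\langle(\Lambda_{\sigma_1}-\Lambda_{\sigma_2})(f),f\rangle\ \ge\ \int_{\R^n}(\sigma_1-\sigma_2)\,|A^{1/2}(-\Delta)^{s/2}u_1|^p\,dx\ \ge\ 0 .
\]

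Next I would exploit the single measurement. Since $u_0\in C_c^\infty(W;\R^m)$ and $W\subset\Omega_e$, the function $u_0$ is simultaneously an admissible exterior datum and an admissible test function supported in $W$, so pairing $\Lambda_{\sigma_1}u_0|_W=\Lambda_{\sigma_2}u_0|_W$ against $u_0$ itself yields $\langle(\Lambda_{\sigma_1}-\Lambda_{\sigma_2})(u_0),u_0\rangle=0$. Feeding $f=u_0$ into the monotonicity inequality and writing $w$ for the $\Lambda_{\sigma_1}$\,-solution with datum $u_0$, I obtain $\int_{\R^n}(\sigma_1-\sigma_2)|A^{1/2}(-\Delta)^{s/2}w|^p\,dx=0$. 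As the integrand is nonnegative and $A$ is uniformly elliptic, this forces $(-\Delta)^{s/2}w=0$ a.e.\ on the set $G\vcentcolon=\{x\in D:\sigma_1(x)>\sigma_2(x)\}$, which is open precisely because $\sigma_1-\sigma_2$ is lower semicontinuous in $D$ — this is where that hypothesis enters.

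Finally I would show $G\subset W$, which is exactly the assertion $\sigma_1=\sigma_2$ in $D\setminus W$. Assuming $G\not\subset W$, I would pick a nonempty open ball $B\subset G$ with $B\cap\overline W=\emptyset$ (the boundary $\partial W$, and similarly $\partial\Omega$ below, are negligible for this $L^\infty$\,-statement), on which $(-\Delta)^{s/2}w=0$. If $B$ meets $\Omega$, then on $U\vcentcolon=B\cap\Omega$ the nonlinear field $|(\sigma_1^{2/p}A)^{1/2}(-\Delta)^{s/2}w|^{p-2}\sigma_1^{2/p}A(-\Delta)^{s/2}w$ vanishes while $(-\Delta)^s_{p,\sigma_1^{2/p}A}w=0$ on all of $\Omega\supset U$, so Theorem~\ref{thm: UCP} applied with the matrix $\sigma_1^{2/p}A\in\mathbb{S}_+^m$ and the pair $(w,0)$ (using $s\notin 2\N$) gives $w\equiv0$ in $\R^n$; if instead $B\subset\Omega_e$, then $w=u_0=0$ on $B$ because $\supp(u_0)\subset W$ is disjoint from $B$, and Theorem~\ref{UCP} (using $s/2\notin\N$) applied to $(-\Delta)^{s/2}w=w=0$ on $B$ again gives $w\equiv0$. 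Either way this contradicts $w|_{\Omega_e}=u_0\neq0$, so $G\subset W$.

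I expect the last step to be the main obstacle: one has to split according to whether the mismatch set $G$ reaches into $\Omega$ or stays in $\Omega_e$ and invoke the correct unique continuation statement — the nonlinear Theorem~\ref{thm: UCP} versus the linear Theorem~\ref{UCP} — in each case, along with the accompanying elementary set-theoretic bookkeeping and the identification of the DN restriction $\Lambda_\sigma u_0|_W$ with the pairing against $u_0$. By contrast, the monotonicity identity and its consequence are routine once the existence theory and minimization characterization of Section~\ref{sec: existence theory} are in hand.
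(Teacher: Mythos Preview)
Your argument is correct and follows the same overall strategy as the paper: a monotonicity inequality for $\langle(\Lambda_{\sigma_1}-\Lambda_{\sigma_2})u_0,u_0\rangle$ forces $(-\Delta)^{s/2}$ of a solution to vanish on the mismatch set, and then a case split (interior vs.\ exterior) invokes Theorem~\ref{thm: UCP} or Theorem~\ref{UCP} to reach a contradiction. The one genuine difference is in the monotonicity step. The paper establishes a two-sided estimate (Lemma~\ref{needed_lemma}) via a Young-inequality computation, and in the proof of the theorem uses its \emph{lower} bound, which involves $(\sigma_1^{1/(p-1)}-\sigma_2^{1/(p-1)})$ and the $\sigma_2$\,-solution $u_2$. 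You instead derive the cleaner lower bound
\[
\langle(\Lambda_{\sigma_1}-\Lambda_{\sigma_2})f,f\rangle\ \ge\ \int_{\R^n}(\sigma_1-\sigma_2)\,|A^{1/2}(-\Delta)^{s/2}u_1|^p\,dx
\]
directly from the minimization characterization, using the $\sigma_1$\,-solution $u_1$ as competitor in the $\sigma_2$\,-energy. This is shorter and entirely sufficient for the theorem; the paper's more elaborate bound is not needed here. One small cosmetic point: you ask for a ball $B$ disjoint from $\overline W$, but all you actually use (and all that is guaranteed when $x_0\in\partial W$) is that $B$ is disjoint from the compact set $\supp(u_0)\subset W$, which is exactly what the paper requires; your parenthetical about $\partial W$ being negligible is the right intuition, and replacing $\overline W$ by $\supp(u_0)$ removes any ambiguity.
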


We get a global uniqueness result for classes of conductivities which are assumed to be nontrivial in the whole Euclidean space $\R^n$. In the linear case, without any monotonicity assumptions, the first corresponding result with infinitely many measurements was obtained very recently in \cite{CRZ22} by Covi and the two last named authors. Theorem \ref{main_theorem} directly implies the \emph{global} uniqueness result (which uses two measurements):

\begin{theorem}
\label{main_theorem 2}
    Let $1< p<\infty$ and $s>0$ with $s\notin 2\N$. Suppose that $W\subset \Omega_e$ is a nonempty open set. Let $\sigma_1, \sigma_2 \in L^{\infty}(\R^n)$ satisfy $\sigma_1(x),\sigma_2(x)\geq \sigma_0>0$ and $\sigma_1\geq \sigma_2$ in $\R^n$. Moreover, suppose that $\sigma_1$ is lower semicontinuous and $\sigma_2$ upper semicontinuous in $\R^n$. If $\Lambda_{\sigma_1}f|_W=\Lambda_{\sigma_2}f|_W$ for all $f\in C_c^{\infty}(W;\R^m)$, then $\sigma_1=\sigma_2$ in $\R^n$.
\end{theorem}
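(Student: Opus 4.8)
The plan is to obtain Theorem~\ref{main_theorem 2} as a direct consequence of the single measurement result Theorem~\ref{main_theorem}, applied twice with $D=\R^n$ to two disjoint open measurement subsets of $W$; this is precisely why two measurements turn out to be enough.

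First I would fix two disjoint nonempty open sets $W_1,W_2\subset W$, which is possible since $W$ is a nonempty open subset of $\R^n$. Each $W_i$ satisfies $W_i\subset W\subset\Omega_e$, hence is an admissible measurement set for Theorem~\ref{main_theorem}. Next I would transfer the measured data from $W$ to $W_i$: for every $f\in C_c^{\infty}(W_i;\R^m)\subset C_c^{\infty}(W;\R^m)$ the hypothesis of Theorem~\ref{main_theorem 2} gives $\Lambda_{\sigma_1}f|_W=\Lambda_{\sigma_2}f|_W$, and since the pairing is then only restricted further to test functions supported in $W_i\subset W$, this forces $\Lambda_{\sigma_1}f|_{W_i}=\Lambda_{\sigma_2}f|_{W_i}$. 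In particular, choosing any nonzero $u_i\in C_c^{\infty}(W_i;\R^m)$ (such exists because $W_i$ is nonempty and open), one has $\Lambda_{\sigma_1}u_i|_{W_i}=\Lambda_{\sigma_2}u_i|_{W_i}$.

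Then I would apply Theorem~\ref{main_theorem} with measurement set $W_i$, with $D=\R^n$, and with the single test function $u_i$, for $i=1,2$. The remaining hypotheses are exactly those assumed in Theorem~\ref{main_theorem 2}: $\sigma_1,\sigma_2\in L^{\infty}(\R^n)$ with $\sigma_1(x),\sigma_2(x)\geq\sigma_0>0$ and $\sigma_1\geq\sigma_2$ in $\R^n$, and $\sigma_1$ lower semicontinuous, $\sigma_2$ upper semicontinuous on all of $\R^n$, hence on $D=\R^n$. Theorem~\ref{main_theorem} then yields $\sigma_1=\sigma_2$ in $\R^n\setminus W_i$ for $i=1,2$. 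Since $W_1\cap W_2=\emptyset$, every point of $\R^n$ lies in $\R^n\setminus W_1$ or in $\R^n\setminus W_2$, so $\sigma_1=\sigma_2$ everywhere in $\R^n$, as claimed.

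I do not expect a serious obstacle: all the analytic content — the UCP of Theorem~\ref{thm: UCP} and the monotonicity relations — is already packaged in Theorem~\ref{main_theorem}, and what remains is a soft localization/covering argument. The only points deserving a line of care are that a nonzero test function supported in $W_i$ exists, and that the semicontinuity hypotheses are imposed on all of $\R^n$ in Theorem~\ref{main_theorem 2} precisely so that the choice $D=\R^n$ is legitimate in both applications; if one only wished to recover $\sigma_1=\sigma_2$ on a given open set $D\subset\R^n$, the same argument works with semicontinuity assumed merely on $D$ and with $W_1,W_2$ still chosen disjoint so that $(\R^n\setminus W_1)\cup(\R^n\setminus W_2)=\R^n\supset D$.
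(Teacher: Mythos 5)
Your proof is correct and follows exactly the paper's approach: pick two disjoint open measurement sets, apply the single-measurement result Theorem~\ref{main_theorem} with $D=\R^n$ to each, and cover $\R^n$ by the complements. In fact you are slightly more careful than the paper, which writes ``Let $W_1,W_2\subset\Omega_e$ be two disjoint open sets'' without explicitly insisting $W_1,W_2\subset W$ --- a requirement you correctly impose so that the hypothesis $\Lambda_{\sigma_1}f|_W=\Lambda_{\sigma_2}f|_W$ for $f\in C_c^{\infty}(W;\R^m)$ actually supplies the single-measurement identity $\Lambda_{\sigma_1}u_i|_{W_i}=\Lambda_{\sigma_2}u_i|_{W_i}$ needed to invoke Theorem~\ref{main_theorem} with measurement set $W_i$.
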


We remark our main theorem related to this inverse problem assumes the global monotonicity relation $\sigma_1\geq \sigma_2$ in $\R^n$. Similar limitations are also present in the known uniqueness results for the $p$\,-Calderón problem, which can be thought as a practically relevant local, fully nonlinear, model problem sharing many similarities with the nonlocal problem studied in our article (see Section \ref{sec: pCalderon-intro} for details). On the other hand, many variants of the fractional Calderón problems for linear equations have very strong uniqueness results and the framework of \cite{GSU20} has been very robust to solve many modified problems. Inverse problems for the fractional $p$\,-biharmonic systems require further studies and it remains a partly open question whether nonlocality permits stronger results also for fully nonlinear nonlocal equations. The proof of Theorem \ref{main_theorem} is given in Section \ref{sec: inverse problems main} and it relies on the UCP (Theorems \ref{UCP} and \ref{thm: UCP}) and adapts different methods appearing in the studies of fractional Calderón problems and the classical $p$\,-Calderón problem.

Monotonicity methods have been applied earlier in the fractional Calderón problem for the linear equation \eqref{eq:fracSchördinger}. In particular, Harrach and Lin showed in \cite{Harrach:Lin:2019, Harrach:Lin:2020} 
that $q_1 \leq q_2$ if and only if $\Lambda_{q_1} \leq \Lambda_{q_2}$. Very recently, Lin considered semilinear equations and used monotonicity arguments in the studies of the Calderón problem for nonlinear perturbations of the fractional Laplacians \cite{Lin:2020}.
See also \cite{Harrach:Ullrich:2013,Tamburrino:Rubinacci:2002} for other accounts of the monotonicity methods in inverse problems.

\subsubsection{Further motivation and comparison with the $p$\,-Calderón problem}\label{sec: pCalderon-intro}
 For $1<p<\infty$, consider the Dirichlet problem for the anisotropic $p$\,-Laplace equation
\begin{equation}  \label{p-lapPN0}
\begin{split}
\Div(\sigma\abs{A\nabla u\cdot \nabla u}^{(p-2)/2} A\nabla u) &= 0 \quad \text{in}\quad \Omega, \\
u &= f \quad \text{on}\quad \partial\Omega,
\end{split}
\end{equation}
where $A\in\mathbb{S}_+^n$ and $\sigma \in L^\infty(\Omega)$ with $\sigma \geq \sigma_0 > 0$. The solution of \eqref{p-lapPN0} is the unique minimizer of the $p$\,-Dirichlet energy 
\[
E_p(v) = \int_{\Omega}\sigma|A\nabla v\cdot \nabla v|^{p/2}\,dx 
\]
over all $v\in W^{1,p}(\Omega)$ with $v-f \in W^{1,p}_0(\Omega)$, see \cite{Heinonen:Kilpelainen:Martio:1993,Salo:Zhong:2012}.
Now, let $\mathcal{X}_p$ be the abstract trace space, i.e.~ $\mathcal{X}_p \vcentcolon = W^{1,p}(\Omega)/W_0^{1,p}(\Omega)$.
Then the related DN map $\Lambda^p_{\sigma}\colon \mathcal{X}_p\to\mathcal{X}_p^*$ is weakly defined by
\begin{equation} \label{dnmap_weak_definition}
\langle\Lambda^p_{\sigma}f, g\rangle = \int_{\Omega} \sigma \abs{A\nabla u_f\cdot \nabla u_f}^{(p-2)/2} A\nabla u_f \cdot \nabla v_g \,dx
\end{equation}
for all $f,g\in \mathcal{X}_p$, where $u_f \in W^{1,p}(\Omega)$ is the unique solution of \eqref{p-lapPN0} and $g=v_g|_{\partial\Omega}$ with $v_g\in W^{1,p}(\Omega)$. The $p$\,-Laplace equation is useful in studying certain nonlinear phenomena appearing in nonlinear dielectrics, plastic moulding, nonlinear fluids including electro-rheological and thermo-rheological fluids, fluids governed by a power law, viscous flows in glaciology, or plasticity (see e.g.~\cite{Brander:Kar:Salo:2014} and the references therein). The $n$-Laplace equation has also a connection to the conformal geometry~\cite{Liimatainen:Salo:2012}.

The inverse problem corresponding to the anisotropic $p$\,-Laplace equation is called the \emph{$p$\,-Calderón problem} and asks to recover the conductivity $\sigma$ from the DN map $\Lambda_{\sigma}^p$. This fully nonlinear variant of the Calder\'on problem was introduced by Salo and Zhong in \cite{Salo:Zhong:2012}, where they proved the boundary uniqueness result stating that $\Lambda_{\sigma}^p$ determines $\sigma|_{\partial \Omega}$. First order boundary uniqueness was proved by Brander \cite{Brander:2014}. Other results include inclusion detection and inverse problems in the presence of obstacles \cite{Brander:Kar:Salo:2014,Kar-Wang}.
Numerical studies and linearization approaches were implemented in \cite{Hannukainen-Nuutti-Lauri}.

In \cite{Guo-Kar-Salo}, Guo, Salo and the first named author showed that if the two conductivities $\sigma_1$ and $\sigma_2$ are monotonic in the sense that $\sigma_1\geq \sigma_2$ in $\Omega$ and if $A \in W^{1,\infty}(\Omega; \mathbb{R}^{n \times n})$ has values in $\mathbb{S}^n_+$, then the DN map is injective for Lipschitz conductivites when $n = 2$ for $1<p<\infty$. When $n \geq 3$, similar uniqueness results hold under the assumption that one of the conductivities must be close to a constant and a $C^{1,\alpha}$ regular matrix $A$ is close to identity matrix. Further references on the monotonicity methods include \cite{Brander-Harrach-Kar-Salo,Corbo-Antonio}. Interior uniqueness for the $p$\,-Calderón problem is still open without monotonicity assumptions, which is one motivation to consider nonlocal analogues of this problem. The proof in \cite{Guo-Kar-Salo} is based on the UCP and a monotonicity inequality for the DN maps (a nonlocal version of this inequality is proved in Lemma~\ref{needed_lemma}). 

The UCP of the $p$\,-Laplace equation in three and higher dimensions is an open problem to the best of our knowledge (see \cite[Theorem 2.7]{Seppo:Niko:2014} for a partial result). In two dimensions, the UCP is fairly well understood, see the works of Alessandrini \cite{Alessandrini:1987}, Bojarski--Iwaniec \cite{bi84} and Manfredi \cite{Manfredi:1988}. In the variable coefficient case, see \cite[Proposition 3.3]{Alessandrini:Sigalotti:2001} and \cite{Guo-Kar}. Due to the lack of the UCP in three and higher dimensional domains for the equation \eqref{p-lapPN0}, the interior uniqueness result for the higher dimensional $p$\,-Calderón problem in \cite{Guo-Kar-Salo} has the mentioned, additional, limitations. However, the UCP for our fractional $p$-biharmonic systems (Theorem \ref{thm: UCP}) holds in any dimension and is suitable for the analysis of the related inverse problem. Interestingly, the analogous results for our nonlocal problem (Theorems \ref{main_theorem} and \ref{main_theorem 2}) hold in any dimension without making any additional stronger assumptions.

\section{Preliminaries}
\label{sec: preliminaries}

In this section we first introduce the relevant function spaces used throughout this article and recall the mapping properties of the fractional Laplacians. Finally, we state the (fractional) Poincar\'e inequality on Bessel potential spaces and the Rellich--Kondrachov theorem which will be essential to prove existence (and uniqueness) of solutions to the variational problems and nonlocal, nonlinear, partial differential equations (PDEs) studied in this work.

\subsection{Bessel potential spaces and fractional Laplacians}
\label{subsec: Bessel potentials and co}
 Throughout the article $n,m\in\N$ are fixed natural numbers specifying the dimension of the domain and range of the functions under consideration. We denote the space of Schwartz functions by $\schwartz(\R^n)$ and its dual, the space of tempered distributions, by $\tempered(\R^n)$. We define the Fourier transform on $\schwartz(\R^n)$ by
\[
    \fourier u(\xi)\vcentcolon = \hat u(\xi) \vcentcolon = \int_{\R^n} u(x)e^{-ix \cdot \xi} \,dx
\]
and extend it by duality to $\tempered(\R^n)$. The Fourier transform $\fourier$ acts as an isomorphism on the spaces $\schwartz(\R^n)$, $\tempered(\R^n)$ and we denote its inverse by $\ifourier u$ or $\check{u}$. The Bessel potential of order $s \in \R$ is the Fourier multiplier $\vev{D}^s\colon \tempered(\R^n) \to \tempered(\R^n)$, that is
\begin{equation}\label{eq: Bessel pot}
    \vev{D}^s u \vcentcolon = \ifourier(\vev{\xi}^s\widehat{u}),
\end{equation} 
where $\vev{\xi}\vcentcolon = (1+|\xi|^2)^{1/2}$ is the so-called Japanese bracket. If $s \in \R$ and $1 \leq p < \infty$, the Bessel potential space $H^{s,p}(\R^n)$ is given by
\begin{equation}
\label{eq: Bessel pot spaces}
    H^{s,p}(\R^n) \vcentcolon = \{ u \in \tempered(\R^n)\,;\, \vev{D}^su \in L^p(\R^n)\},
\end{equation}
 endowed with the norm 
 \[
    \norm{u}_{H^{s,p}(\R^n)} \vcentcolon = \norm{\vev{D}^su}_{L^p(\R^n)}.
\]
 For any open set $\Omega\subset \R^n$ and closed set $F\subset\R^n$, we introduce the following local Bessel potential spaces:
\begin{equation}
\label{eq: local bessel pot spaces}
\begin{split}
    \widetilde{H}^{s,p}(\Omega) &\vcentcolon = \mbox{closure of } C_c^\infty(\Omega;\R^m) \mbox{ in } H^{s,p}(\R^n),\\
    H^{s,p}_F(\R^n)&\vcentcolon =\{\,u \in H^{s,p}(\R^n)\,;\, \supp(u) \subset F\,\}.
\end{split}
\end{equation}

If $u\in\tempered(\R^n)$ is a tempered distribution and $s\geq 0$, the fractional Laplacian of order $s$ of $u$ is the Fourier multiplier
\[
    (-\Delta)^su\vcentcolon = \ifourier(|\xi|^{2s}\widehat{u}),
\]
whenever the right hand side is well-defined. If $p\geq 1$ and $t\in\R$, the fractional Laplacian is a bounded linear operator $(-\Delta)^{s}\colon H^{t,p}(\R^n) \to H^{t-2s,p}(\R^n)$. 

Moreover, we denote by $H^{s,p}(\R^n;\R^m),\,\widetilde{H}^{s,p}(\Omega;\R^m),\,H^{s,p}_F(\R^n;\R^m)$ the $m-$fold cartesian product of the above scalar valued spaces and they are naturally endowed with the norm 
\[
    \norm{u}_{H^{s,p}(\R^n;\R^m)} \vcentcolon = \norm{\vev{D}^su}_{L^p(\R^n;\R^m)}.
\]
We extend the Bessel potential operator $\langle D\rangle^s$ and the fractional Laplacian $(-\Delta)^s$ to these spaces by acting componentwise. Clearly, these operators share the same mapping properties on these vectorial spaces as in the scalar valued setting.

\subsection{Poincar\'e inequalities on Bessel potential spaces and the Rellich--Kondrachov theorem}

In this subsection we state a fractional Poincar\'e inequality and a variant of the Rellich--Kondrachov theorem which are adapted to our functional setting. The first result directly follows from Lemma~5.4 in \cite{RZ2022unboundedFracCald}. The second one can be proved, as is done below, using compact embeddings in Besov-type and Triebel--Lizorkin-type spaces on smooth bounded domains \cite{GHS20-Compact-embeddings-Besov-Triebel}. 

\begin{theorem}[{Fractional Poincaré inequality on bounded sets}]
\label{thm: Poincare Bounded sets} 
    Let $\Omega \subset \R^n$ be a bounded open set, $s>0$, $1 < p < \infty$ and $\K=\C$ or $\K=\R^m$. Then there exists $C(n,p,s,\Omega,\K)>0$ such that
\begin{equation}
    \norm{u}_{L^p(\R^n;\K)} \leq C\norm{(-\Delta)^{s/2}u}_{L^p(\R^n;\K)}
\end{equation}
for all $u \in \widetilde{H}^{s,p}(\Omega;\K)$.
\end{theorem}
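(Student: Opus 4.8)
The plan is to prove the estimate by a soft compactness argument, which has the advantage of treating all orders $s>0$ simultaneously without any explicit kernel bounds. (A hands-on alternative, essentially the route behind the cited \cite[Lemma~5.4]{RZ2022unboundedFracCald}, is to use the homogeneous Sobolev embeddings $\dot{H}^{s,p}(\R^n)\hookrightarrow L^{np/(n-sp)}(\R^n)$ when $sp<n$, into $\mathrm{BMO}$ when $sp=n$, and into homogeneous H\"older--Zygmund spaces when $sp>n$, together with $\abs{\Omega}<\infty$ and the fact that $u$ vanishes off $\Omega$.) First I reduce to the scalar case $\K=\R$, since both sides act componentwise for $\K=\R^m$ (and $\K=\C$). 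Next I record the elementary a priori bound
\[
\norm{u}_{H^{s,p}(\R^n)}\leq C\big(\norm{u}_{L^p(\R^n)}+\norm{(-\Delta)^{s/2}u}_{L^p(\R^n)}\big),\qquad u\in H^{s,p}(\R^n),
\]
which I obtain by splitting the Bessel multiplier with a cutoff $\phi\in C_c^\infty(\R^n)$, $\phi\equiv 1$ near $\xi=0$: the low part $\phi(\xi)\vev{\xi}^s$ acts on $u$ as convolution with a Schwartz function, while $(1-\phi(\xi))\vev{\xi}^s\abs{\xi}^{-s}$ is a Mikhlin--H\"ormander multiplier and thus bounded on $L^p(\R^n)$ for $1<p<\infty$, applied here to $(-\Delta)^{s/2}u$.

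Suppose now, towards a contradiction, that the claimed constant does not exist. Then there is a sequence $u_j\in\widetilde{H}^{s,p}(\Omega)$ with $\norm{u_j}_{L^p(\R^n)}=1$ and $\norm{(-\Delta)^{s/2}u_j}_{L^p(\R^n)}\to 0$. By the a priori bound the sequence is bounded in $H^{s,p}(\R^n)$, which is reflexive since $\vev{D}^s$ is an isometric isomorphism onto $L^p(\R^n)$ and $1<p<\infty$; passing to a subsequence, $u_j\rightharpoonup u$ weakly in $H^{s,p}(\R^n)$, and $u\in\widetilde{H}^{s,p}(\Omega)$ because this is a closed, hence weakly closed, subspace. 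Since $\Omega$ is bounded, the embedding $\widetilde{H}^{s,p}(\Omega)\hookrightarrow L^p(\R^n)$ is compact (the fractional Rellich--Kondrachov theorem, whose proof via compact Besov/Triebel--Lizorkin embeddings does not use the present inequality); hence $u_j\to u$ strongly in $L^p$ and $\norm{u}_{L^p(\R^n)}=1$. On the other hand $(-\Delta)^{s/2}\colon H^{s,p}(\R^n)\to L^p(\R^n)$ is bounded, hence weak-to-weak continuous, so $(-\Delta)^{s/2}u_j\rightharpoonup(-\Delta)^{s/2}u$ in $L^p(\R^n)$; since $(-\Delta)^{s/2}u_j\to 0$ strongly, this forces $(-\Delta)^{s/2}u=0$.

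Finally, $(-\Delta)^{s/2}u=0$ means $\abs{\xi}^s\widehat{u}=0$ in $\tempered(\R^n)$, so $\widehat{u}$ is supported at the origin and $u$ is a polynomial; as $u\in H^{s,p}(\R^n)\subset L^p(\R^n)$ with $p<\infty$, necessarily $u\equiv 0$, contradicting $\norm{u}_{L^p(\R^n)}=1$. This proves the scalar inequality, and the vector-valued case follows componentwise. The only genuinely substantial ingredient here is the compact embedding invoked in the second paragraph; if instead one wants a self-contained, quantitative proof with an explicit (non-optimal) constant, the main obstacle is the regime $sp\geq n$, where the Riesz potential $(-\Delta)^{-s/2}$ fails to map $L^p$ into any $L^q$ and one must combine the critical and supercritical Sobolev embeddings mentioned above with the fact that $u$ vanishes identically outside the bounded set $\Omega$.
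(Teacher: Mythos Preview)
Your proof is correct. The paper itself does not give a proof of this theorem but simply refers to \cite[Lemma~5.4]{RZ2022unboundedFracCald}, whose argument (as you correctly summarize) proceeds through the homogeneous Sobolev embeddings and a case distinction according to whether $sp<n$, $sp=n$, or $sp>n$. Your route is genuinely different: a soft compactness--contradiction argument that avoids this case analysis entirely and yields the result for all $s>0$ at once, at the price of being non-constructive (no explicit constant). The key external input is the compact embedding $\widetilde{H}^{s,p}(\Omega)\hookrightarrow L^p(\R^n)$, which in the paper is Theorem~\ref{thm: Rellich Kondrachov} and is stated \emph{after} the present theorem; you are right to flag that its proof via \cite{GHS20-Compact-embeddings-Besov-Triebel} does not use the Poincar\'e inequality, so there is no circularity. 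The Mikhlin--H\"ormander splitting you use to get the a~priori bound $\norm{u}_{H^{s,p}}\leq C(\norm{u}_{L^p}+\norm{(-\Delta)^{s/2}u}_{L^p})$ is also clean and correct; this equivalence of norms is in fact used repeatedly later in the paper.
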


\begin{theorem}[Rellich--Kondrachov theorem]
\label{thm: Rellich Kondrachov} 
    Let $\Omega \subset \R^n$ be a bounded open set, $s>0$, $1 < p < \infty$ and $\K=\C$ or $\K=\R^m$. Then the embedding $\widetilde{H}^{s,p}(\Omega;\K) \hookrightarrow L^p(\R^n;\K)$ is compact.
\end{theorem}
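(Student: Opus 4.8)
The plan is to reduce the compactness statement to known compact embedding results for Besov-type or Triebel–Lizorkin-type function spaces on smooth bounded domains, as announced in the text. First I would fix a smooth bounded open set $\Omega' \subset \R^n$ with $\overline{\Omega} \subset \Omega'$, so that every $u \in \widetilde{H}^{s,p}(\Omega;\K)$ is supported in the compact set $\overline{\Omega} \subset \Omega'$. Since $s>0$ and $1<p<\infty$, the Bessel potential space $H^{s,p}(\R^n)$ coincides (with equivalent norm) with the Triebel–Lizorkin space $F^{s}_{p,2}(\R^n)$; this is the classical Littlewood–Paley characterization of Bessel potential spaces. Restriction to $\Omega'$ then maps $\widetilde{H}^{s,p}(\Omega;\K)$ boundedly into the corresponding space $F^{s}_{p,2}(\Omega';\K)$ on the smooth bounded domain $\Omega'$ (indeed into the subspace of elements vanishing outside $\overline{\Omega}$). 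I would then invoke a compact embedding theorem, e.g. from \cite{GHS20-Compact-embeddings-Besov-Triebel}, which gives that $F^{s}_{p,2}(\Omega';\K) \hookrightarrow F^{0}_{p,2}(\Omega';\K) = L^p(\Omega';\K)$ is compact whenever $s>0$ (the strict positivity of the smoothness gap $s - 0 > 0$ is exactly the hypothesis needed).

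The key steps, in order, are: (1) identify $H^{s,p}(\R^n) = F^{s}_{p,2}(\R^n)$ with equivalent norms, and correspondingly for the $\K$-valued spaces by acting componentwise; (2) fix the smooth bounded enlargement $\Omega'$ of $\Omega$ and observe that elements of $\widetilde{H}^{s,p}(\Omega;\K)$ are, after restriction, elements of $F^{s}_{p,2}(\Omega';\K)$ supported in $\overline{\Omega}$, with the restriction map bounded and the inverse (extension by zero) also bounded on this subspace, so that $\widetilde{H}^{s,p}(\Omega;\K)$ is isomorphic to a closed subspace of $F^{s}_{p,2}(\Omega';\K)$; (3) apply the compact embedding $F^{s}_{p,2}(\Omega';\K) \hookrightarrow L^p(\Omega';\K)$ from \cite{GHS20-Compact-embeddings-Besov-Triebel}; (4) conclude: given a bounded sequence $(u_j) \subset \widetilde{H}^{s,p}(\Omega;\K)$, its restrictions to $\Omega'$ form a bounded sequence in $F^{s}_{p,2}(\Omega';\K)$, hence have a subsequence converging in $L^p(\Omega';\K)$; since all $u_j$ are supported in $\overline{\Omega} \subset \Omega'$, this subsequence converges in $L^p(\R^n;\K)$, and the limit is supported in $\overline{\Omega}$. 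This proves that the embedding $\widetilde{H}^{s,p}(\Omega;\K) \hookrightarrow L^p(\R^n;\K)$ is compact.

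The main obstacle I anticipate is purely bookkeeping rather than conceptual: one must be careful to cite a version of the compact embedding theorem that applies to the precise scale of spaces on domains that is relevant here (there are several competing definitions of Triebel–Lizorkin spaces on domains — by restriction, by intrinsic norms, or with support conditions — and the compactness statement must be quoted for the one matching $\widetilde{H}^{s,p}(\Omega)$). The cleanest route is to work with the support-restricted spaces $\widetilde{F}^{s}_{p,2}(\Omega')$, for which the embedding into $\widetilde{F}^{0}_{p,2}(\Omega') = L^p_{\overline{\Omega}}(\R^n)$ is compact by \cite{GHS20-Compact-embeddings-Besov-Triebel}, since $\widetilde{H}^{s,p}(\Omega;\K)$ embeds continuously into $\widetilde{F}^{s}_{p,2}(\Omega';\K)$ when $\overline{\Omega}$ is compactly contained in $\Omega'$. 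The $\K$-valued case adds nothing new, as compactness of a finite Cartesian product of compact embeddings is immediate. One should also note that the hypothesis $1<p<\infty$ is used to have the Littlewood–Paley/Triebel–Lizorkin identification of Bessel potential spaces, and $s>0$ is used to have a strictly positive smoothness gap; both are present in the statement.
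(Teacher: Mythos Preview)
Your proposal is correct and follows essentially the same route as the paper: enlarge $\Omega$ to a smooth bounded domain $\Omega'$, identify $H^{s,p}$ with the Triebel--Lizorkin scale $F^{s}_{p,2}$, invoke the compact embedding $F^{s}_{p,2}(\Omega')\hookrightarrow L^p(\Omega')$ from \cite{GHS20-Compact-embeddings-Besov-Triebel}, and use the support condition in $\overline{\Omega}$ to upgrade $L^p(\Omega')$-convergence to $L^p(\R^n)$-convergence. The paper is only slightly more specific in that it works with the generalized Triebel--Lizorkin-type spaces $F^{s,\tau}_{p,q}$ (taking $\tau=0$) and cites the precise results \cite[Remark~2.6, Definition~2.10, Corollary~3.5]{GHS20-Compact-embeddings-Besov-Triebel}, which resolves exactly the bookkeeping concern you flagged about matching the correct definition of function spaces on domains.
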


\begin{proof} 
    Without loss of generality we can restrict ourselves to the complex valued case. Let $\overline{\Omega} \subset \Omega'$ where $\Omega'$ is a smooth bounded domain. By \cite[Remark 2.6, Definition 2.10]{GHS20-Compact-embeddings-Besov-Triebel} one has $F^{s,0}_{p,q}(\Omega')=F^{s}_{p,q}(\Omega)$ for $s\in\R,1<p<\infty$, $0<q\leq\infty$, where $F^{s,\tau}_{p,q}(\Omega)$, $0\leq \tau\leq \infty$, denotes the generalized Triebel-Lizorkin space. Since the Triebel--Lizorkin spaces coincide with the Bessel potential space for $q=2$ we have the identification $F^{s,0}_{p,2}(\Omega')=H^{s,p}(\Omega')$ for $s\in\R,1<p<\infty$. Therefore, \cite[Corollary 3.5]{GHS20-Compact-embeddings-Besov-Triebel} shows that $H^{s,p}(\Omega') \hookrightarrow L^p(\Omega')$ is compact. By the embeddings $\widetilde{H}^{s,p}(\Omega) \hookrightarrow \widetilde{H}^{s,p}(\Omega')\hookrightarrow H^{s,p}(\Omega')$ and $u=0$ a.e. in $\R^n\setminus\overline{\Omega}$ for all $u\in\widetilde{H}^{s,p}(\Omega)$ with $s\geq 0$, it follows that $\widetilde{H}^{s,p}(\Omega)\hookrightarrow L^p(\R^n)$ is compact.
\end{proof}

\subsection{Caffarelli--Silvestre extension problems}
\label{subsec: CS extension problems}

The purpose of this section is to recall the extension technique introduced by Caffarelli and Silvestre in \cite{CS-extension-problem-fractional-laplacian}. More precisely, they showed in their celebrated work that the fractional Laplacian of a smooth bounded function $f\colon \R^n\to\R$ can be obtained as a weighted normal derivative of a function $u\colon \R^{n+1}_+\to \R$ solving a degenerate elliptic equation in $\R^{n+1}_+=\R^n\times (0,\infty)$. 

To make the presentation more transpartent we first fix some notation. We will always use the variable $x$ to label points in $\R^n$, the variable $y$ for points in $\R_+$ and capital letters $X$ when we refer to points in $\R^{n+1}$. Moreover, to highlight that a partial differential operator (PDO) $P=P(\partial)$ acts on $\R^{n+1}_+$ we will use the symbol $\overline{P}$. In particular, we write $\Nabla$, $\DivH$ and $\Lap$ to denote the gradient, the diveregence and the Laplacian on $\R^{n+1}_+$. Of particular interest, related to extension problems, is the following PDO
\begin{equation}
\label{eq: PDO extension 1}
    \DivH(y^{1-2s}\Nabla u(x,y)),
\end{equation}
when $0<s<1$. A straight forward computation shows the identity
\begin{equation}
\label{eq: PDO extension 2}
    y^{-(1-2s)}\DivH(y^{1-2s}\Nabla u(x,y))=\Lap_su(x,y)\quad\text{with}\quad \Lap_s\vcentcolon =\Lap+\frac{1-2s}{y}\partial_y.
\end{equation}
Next we recall the notion of Muckenhoupt weights and introduce a particular class of weighted Sobolev spaces (cf.~\cite{WeightedSobolev}). For any $1<p<\infty$, we say that a weight $w\colon \R^n\to [0,\infty)$ belongs to the Muckenhoupt class $A_p$ if there holds
\[
    \left(\frac{1}{|B|}\int_B w\,dx\right)\left(\frac{1}{|B|}\int_B w^{-p'/p}\,dx\right)^{p/p'}\leq C<\infty
\]
for all balls $B\subset \R^n$, where $1<p'<\infty$ satisfies $1/p+1/p'=1$. A direct calculation shows that $|y|^{1-2s}\,dxdy$ is an $A_2$ weight in $\R^{n+1}$. By \cite[Proposition 7.1.5]{Grafakos1} we deduce that $|y|^{1-2s}$ is an $A_p$ weight for $p\geq 2$. Therefore, following \cite{WeightedSobolev} we can define for any $0<s<1$, $2\leq p<\infty$ and open sets $\Omega\subset \R^{n+1}_+$ the weighted Sobolev spaces $W^{1,p}(\Omega,y^{1-2s})$ as the set of all measurable functions $u\colon \Omega\to \R$ satisfying
\[
    \|u\|_{W^{1,p}(\Omega,y^{1-2s})}\vcentcolon = \|u\|_{L^p(\Omega,y^{1-2s})}+\|\Nabla u\|_{L^p(\Omega,y^{1-2s})} 
\]
with
\[
    \|u\|_{L^p(\Omega,y^{1-2s})}\vcentcolon =\left(\int_{\Omega}|u|^py^{1-2s}\,dX\right)^{1/p}.
\]
As shown in \cite{WeightedSobolev} the spaces $W^{1,p}(\Omega,y^{1-2s})$ endowed with $\|\cdot\|_{W^{1,p}(\Omega,y^{1-2s})}$ are Banach spaces. Moreover, we say that $u\in W^{1,p}_{loc}(\overline{\R^{n+1}_+},y^{1-2s})$ if there holds $u\in W^{1,p}(B_r\times (0,r),y^{1-2s})$ for any $r>0$, where $B_r$ denotes the open ball at the origin with radius $r>0$ in $\R^n$. As usual for $p=2$ we set $W^{1,2}(\Omega,y^{1-2s})=H^1(\Omega,y^{1-2s})$ and $W^{1,2}_{loc}(\overline{\R^{n+1}_+},y^{1-2s})=H^1_{loc}(\R^{n+1}_+,y^{1-2s})$.

Now we are ready to state the aforementioned result of Caffarelli and Silvestre:
\begin{theorem}
\label{thm: CS extension}
    Let $0<s<1$. Then for any $u\in H^s(\R^n)$ there is a unique function $U\in H^1_{loc}(\overline{\R^{n+1}_+},y^{1-2s})$, which solves the extension problem
    \begin{align}\label{p-Laplace}
    \begin{array}{rl} 
    \Lap_sU &\!\!\!= 0 \ \text{ in } \R^{n+1}_+, \\
    U &\!\!\!= u \ \text{ on } \R^n\times \{0\}, \end{array} 
\end{align}
and there exists a constant $c_{n,s}>0$ such that there holds
\[
    (-\Delta)^su(x)=-c_{n,s}\lim_{y\to 0}y^{1-2s}\partial_y U(x,y)
\] 
in $H^{-s}(\R^n)$. Moreover, the unique extension $U$ can be represented as the convolution $U=C_{n,s} P(\cdot,y)\ast u$, where 
\begin{equation}
    P(x,y)=\frac{y^{2s}}{(|x|^2+y^2)^{\frac{n+2s}{2}}}
\end{equation}
is the so called generalized Poisson kernel and $C_{n,s}\vcentcolon = \|P(\cdot,1)\|_{L^1(\R^n)}^{-1}$.
\end{theorem}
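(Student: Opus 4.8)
The plan is to construct the extension explicitly by a Fourier transform in the tangential variable and then to verify each assertion. Applying $\fourier$ in $x$ to $\Lap_s U=0$ turns the extension problem \eqref{p-Laplace} into the family of ordinary differential equations $\partial_y^2\widehat U(\xi,y)+\frac{1-2s}{y}\partial_y\widehat U(\xi,y)-|\xi|^2\widehat U(\xi,y)=0$ for $y>0$, with $\widehat U(\xi,0)=\widehat u(\xi)$. The substitution $\widehat U(\xi,y)=\widehat u(\xi)\,\theta(|\xi|y)$ reduces this to the modified Bessel equation $\theta''(t)+\frac{1-2s}{t}\theta'(t)-\theta(t)=0$, whose unique solution that stays bounded as $t\to\infty$ and is normalized by $\theta(0)=1$ is $\theta(t)=\frac{2^{1-s}}{\Gamma(s)}\,t^s K_s(t)$, with $K_s$ the modified Bessel function of the second kind. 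I would therefore \emph{define} $U$ by $\widehat U(\xi,y)=\widehat u(\xi)\theta(|\xi|y)$. The identification $U=C_{n,s}P(\cdot,y)\ast u$ then amounts to computing $\ifourier(\theta(|\xi|y))$; the quickest route to the explicit formula for $P$ is to use the scaling invariance of the operator in \eqref{eq: PDO extension 1} under $(x,y)\mapsto(\lambda x,\lambda y)$, which forces the Poisson kernel to have the self-similar form $y^{-n}\Phi(x/y)$, and then to insert this ansatz into $\Lap_s(y^{-n}\Phi(x/y))=0$; solving the resulting ODE for $\Phi$ and normalizing the total mass yields $P$ up to $C_{n,s}=\|P(\cdot,1)\|_{L^1(\R^n)}^{-1}$. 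Alternatively, one verifies by direct differentiation that the stated $P$ satisfies \eqref{eq: PDO extension 2}.

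Next I would establish the weighted $H^1$ bound and the Neumann identity. A Plancherel computation gives, up to an explicit constant, the global energy identity $\int_{\R^{n+1}_+}|\Nabla U|^2 y^{1-2s}\,dX = c_{n,s}^{-1}\|(-\Delta)^{s/2}u\|_{L^2(\R^n)}^2$; combined with the local boundedness of convolution against $P(\cdot,y)$, this yields $U\in H^1_{loc}(\overline{\R^{n+1}_+},y^{1-2s})$ whenever $u\in H^s(\R^n)$. For the weighted normal derivative I would use the expansion $\theta(t)=1-c_s t^{2s}+o(t^{2s})$ as $t\downarrow 0$ with $c_s=4^{-s}\Gamma(1-s)/\Gamma(1+s)$; differentiating in $y$ gives $y^{1-2s}\partial_y\widehat U(\xi,y)\to -2s\,c_s\,|\xi|^{2s}\widehat u(\xi)$, hence $-c_{n,s}\lim_{y\to 0}y^{1-2s}\partial_y U=(-\Delta)^s u$ in $H^{-s}(\R^n)$ with $c_{n,s}=(2s\,c_s)^{-1}$.

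For uniqueness, suppose $V\in H^1_{loc}(\overline{\R^{n+1}_+},y^{1-2s})$ also solves $\Lap_s V=0$ with trace $u$ on $\R^n\times\{0\}$; then $W=U-V$ has vanishing trace and $\DivH(y^{1-2s}\Nabla W)=0$ weakly. Testing against $W\chi_R^2$, where $\chi_R$ is a cutoff in $(x,y)$ supported in $B_R\times(0,R)$, using the vanishing trace to discard the boundary term on $\{y=0\}$, and letting $R\to\infty$ together with a weighted Caccioppoli inequality forces $\Nabla W\equiv 0$, hence $W\equiv 0$. The step I expect to be the main obstacle is exactly this uniqueness argument within the weighted Sobolev class: one has to justify the density of the admissible test functions, give a rigorous meaning to the trace of a $W^{1,2}_{loc}(\overline{\R^{n+1}_+},y^{1-2s})$-function on $\{y=0\}$, and control the behaviour of $W$ both where the weight degenerates (near $\{y=0\}$) and at spatial infinity — which is precisely where the $A_2$-weight theory behind these weighted Sobolev spaces and a weighted Hardy/Poincaré inequality enter.
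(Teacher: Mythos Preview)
The paper does not prove this theorem; it is stated without proof as the classical Caffarelli--Silvestre result, with a reference to \cite{CS-extension-problem-fractional-laplacian}. So there is no ``paper's own proof'' to compare against, and your proposal is an independent reconstruction of the standard argument.

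Your outline is essentially the textbook Fourier/Bessel approach and is correct in its main lines: the reduction to the ODE for $\theta$, the identification $\theta(t)=\frac{2^{1-s}}{\Gamma(s)}t^sK_s(t)$, the small-$t$ expansion giving the weighted Neumann limit, and the Plancherel energy identity are all fine. The paper does, in Lemma~\ref{lem: properties Poisson kernel}\ref{item 3 Poisson kernel}, verify directly that the stated $P$ satisfies $\Lap_s P=0$, which matches your ``alternatively, verify by direct differentiation'' remark.

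The one genuine soft spot is the uniqueness step, and you have correctly flagged it. As literally stated, uniqueness in the class $H^1_{loc}(\overline{\R^{n+1}_+},y^{1-2s})$ cannot hold without an additional condition at infinity: for instance one may add to $U$ a nontrivial $\Lap_s$-harmonic function with zero trace (e.g.\ built from the growing Bessel branch $t^sI_s(t)$ on the Fourier side, suitably localized), which still lies in $H^1_{loc}$. The standard formulation imposes finite global weighted Dirichlet energy $\int_{\R^{n+1}_+}|\Nabla U|^2 y^{1-2s}\,dX<\infty$ (equivalently, $U$ in the Beppo Levi space $\dot H^1(\R^{n+1}_+,y^{1-2s})$), and it is in that class that your cutoff/Caccioppoli argument closes: the cross term $\int y^{1-2s}\Nabla W\cdot\Nabla(\chi_R^2)W$ is then controlled by the tail of the global energy and vanishes as $R\to\infty$. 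With only $H^1_{loc}$ you cannot pass to the limit. So either read the theorem with this implicit finite-energy normalization (as is customary in the CS literature), or note explicitly that this is what makes the uniqueness go through.
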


Later on in Section~\ref{sec: UCP} we will use this explicit representation of the extension $U$ via the generalized Poisson kernel $P$ to show that the CS extension can be extended to the $L^p$ setting when $p>2$.

\subsection*{Conventions} Throughout the whole article we denote by $1<p'<\infty$ the H\"older conjugated exponent to $1<p<\infty$. Moreover, the dimension $n$ of the domain is fixed to be any natural number but since the results are independent of $n$ we do not further specify it. Furthermore, we denote by $B_r(x_0)$ the ball of radius $r>0$ around $x_0\in\R^n$ in $\R^n$ and by $B^{n+1}(X_0)$ around $X_0\in\R^{n+1}$ in $\R^{n+1}$, we set $B_r\vcentcolon = B_r(0)$, $B_r^{n+1}\vcentcolon = B_r^{n+1}(0)$ and $B_{r,+}^{n+1}\vcentcolon = B_r^{n+1}\cap \overline{\R^{n+1}_+}$.

\section{A variational characterization of the fractional Poincar\'e constant on Bessel potential spaces}
\label{sec: varioational char}

In this section, we show that the fractional $p$\,-biharmonic operator, whose related inverse problem is studied later on, naturally appears when one wants to obtain a variational characterization of the fractional Poincar\'e constant in Theorem~\ref{thm: Poincare Bounded sets}.

\begin{proof}[Proof of Theorem \ref{Theorem: variational characterization of fractional poincare constant}]
    \ref{item 1 var char} This is immediate from the definition of the optimal Poincar\'e constant.
    
    \ref{item 2 var char} Using the fractional Poincar\'e inequality (Theorem~\ref{thm: Poincare Bounded sets} ) and the splitting of the Bessel norm $\|u\|_{H^{s,p}(\R^n)}\sim \|u\|_{L^p(\R^n)}+\|(-\Delta)^{s/2}u\|_{L^p(\R^n)}$ we can endow $\widetilde{H}^{s,p}(\Omega)$ with the equivalent norm $\|u\|_{\widetilde{H}^{s,p}(\Omega)}\vcentcolon =\|(-\Delta)^{s/2}u\|_{L^p(\R^n)}$ for $u\in\widetilde{H}^{s,p}(\Omega)$. Then $(\widetilde{H}^{s,p}(\Omega),\|\cdot\|_{\widetilde{H}^{s,p}(\Omega)})$ is clearly a reflexive Banach space as a closed subspace of a reflexive space. Next we show that $\mathcal{M}_p\subset \widetilde{H}^{s,p}(\Omega)$ is weakly closed in $\widetilde{H}^{s,p}(\Omega)$. Assume $(u_n)_{n\in\N}\subset \mathcal{M}_p$ converges weakly to $u\in\widetilde{H}^{s,p}(\Omega)$. By the Rellich-Kondrachov theorem (Theorem~\ref{thm: Rellich Kondrachov}), the embedding $\widetilde{H}^{s,p}(\Omega)\hookrightarrow L^p(\R^n)$ is compact and thus $u_n\to u$ strongly in $L^p(\R^n)$, but this guarantees that $u\in \mathcal{M}_p$. By the very definition of $\mathcal{E}_p$, it is a coercive and sequentially lower semi-continuous functional on $\mathcal{M}_p$ and hence by \cite[Theorem 1.2]{Variational-Methods} there exists a minimizer $ u\in\mathcal{M}_p$ of $\mathcal{E}_p$ such that $\mathcal{E}_p(u)>0$. The strict positivity follows from the fact that $\mathcal{E}_p(u)=0$ would imply by the fractional Poincar\'e inequality that $u=0$ and so $u$ could not belong to $\mathcal{M}_p$.
    
    \ref{item 3 var char} Fix $\phi\in C_c^{\infty}(\Omega)$ and let $|\epsilon|\leq \epsilon_0$, where $\epsilon_0>0$ is chosen in such a way that $\epsilon_0\|\phi\|_{L^p(\R^n)}\leq 1/2$. Note that this guarantees by the triangle inequality
    \[
        \|u+\epsilon\phi\|_{L^p(\R^n)}\geq \|u\|_{L^p(\R^n)}-|\epsilon|\|\phi\|_{L^p(\R^n)}\geq 1-\epsilon_0\|\phi\|_{L^p(\R^n)}\geq 1/2
    \]
    for all $|\epsilon|\leq \epsilon_0$. Hence, we have $u_{\epsilon}=\frac{u+\epsilon\phi}{\|u+\epsilon\phi\|_{L^p(\R^n)}}\in\mathcal{M}_p$. Next note that 
    \[
        \left.\frac{d}{d\epsilon}\right|_{\epsilon=0}|z+\epsilon w|^p=p|z|^{p-2}(z_1w_1+z_2w_2)=p|z|^{p-2}\text{Re}(\Bar{z}w)
    \]
    for all $z=z_1+iz_2,w=w_1+iw_2\in\C$, since $p>1$. Thus, using the dominated convergence theorem and the fact that $u$ is a minimizer, we obtain
    \[
    \begin{split}
        0&=\left.\frac{d}{d\epsilon}\right|_{\epsilon=0}\mathcal{E}_p(u_{\epsilon})=\left.\frac{d}{d\epsilon}\right|_{\epsilon=0}\frac{\int_{\R^n}|(-\Delta)^{s/2}u+\epsilon(-\Delta)^{s/2}\phi|^p\,dx}{\int_{\R^n}|u+\epsilon\phi|^p\,dx}\\
        &=p\left(\int_{\R^n}|(-\Delta)^{s/2}u|^{p-2}\text{Re}(\overline{(-\Delta)^{s/2}u}(-\Delta)^{s/2}\phi)\,dx-\lambda_{1,s,p}\int_{\R^n}|u|^{p-2}\text{Re}(\overline{u}\phi)\,dx\right)
    \end{split}
    \]
for all $\phi\in C_c^{\infty}(\Omega)$, where we have used $\|u\|_{L^p(\R^n)}=1$ and set $\lambda_{1,s,p}=\|(-\Delta)^{s/2}u\|_{L^p(\R^n)}^p$. Since the fractional Laplacian is generated by a real-valued, radial multiplier we have $\overline{(-\Delta)^{s/2}u}=(-\Delta)^{s/2}\Bar{u}$ for all $u\in H^{s,p}(\R^n)$, and thus we deduce
\[
    \begin{split}
        \int_{\R^n}|(-\Delta)^{s/2}u|^{p-2}((-\Delta)^{s/2}\overline{u}(-\Delta)^{s/2}\phi+(-\Delta)^{s/2}u(-\Delta)^{s/2}\overline{\phi})\,dx&=\lambda_{1,s,p}\int_{\R^n}|u|^{p-2}(\overline{u}\phi+u\overline{\phi})\,dx,\\
        \int_{\R^n}|(-\Delta)^{s/2}u|^{p-2}((-\Delta)^{s/2}u(-\Delta)^{s/2}\phi+(-\Delta)^{s/2}\overline{u}(-\Delta)^{s/2}\overline{\phi})\,dx&=\lambda_{1,s,p}\int_{\R^n}|u|^{p-2}(u\phi+\overline{u}\overline{\phi})\,dx.
    \end{split}
\]
The second identity follows from the first one by replacing $\phi$ by $\Bar{\phi}$. Adding, subtracting these two identities, respectively, we have
\[
\begin{split}
        &\int_{\R^n}|(-\Delta)^{s/2}u|^{p-2}((-\Delta)^{s/2}u+(-\Delta)^{s/2}\overline{u})((-\Delta)^{s/2}\phi+(-\Delta)^{s/2}\overline{\phi})\,dx\\
        &=\lambda_{1,s,p}\int_{\R^n}|u|^{p-2}(u+\overline{u})(\phi+\overline{\phi})\,dx,\\
        &\int_{\R^n}|(-\Delta)^{s/2}u|^{p-2}((-\Delta)^{s/2}u-(-\Delta)^{s/2}\overline{u})((-\Delta)^{s/2}\phi-(-\Delta)^{s/2}\overline{\phi})\,dx\\
        &=\lambda_{1,s,p}\int_{\R^n}|u|^{p-2}(u-\overline{u})(\phi-\overline{\phi})\,dx.
    \end{split}
\]
Choosing $\phi$ real-valued, purely imaginary valued in the first and second equation, respectively, we get
\[
\begin{split}
  \text{Re}\left(\int_{\R^n}\left(|(-\Delta)^{s/2}u|^{p-2}(-\Delta)^{s/2}u (-\Delta)^{s/2}\phi-\lambda_{1,s,p}|u|^{p-2}u\phi\right)\,dx\right)=0\\
  \text{Im}\left(\int_{\R^n}\left(|(-\Delta)^{s/2}u|^{p-2}(-\Delta)^{s/2}u (-\Delta)^{s/2}\phi-\lambda_{1,s,p}|u|^{p-2}u\phi\right)\,dx\right)=0\\
 \end{split}
\]
for all $\phi\in C_c^{\infty}(\Omega;\R)$ and therefore there holds
\[
    \int_{\R^n}|(-\Delta)^{s/2}u|^{p-2}(-\Delta)^{s/2}u (-\Delta)^{s/2}\phi\,dx=\lambda_{1,s,p}\int_{\R^n}|u|^{p-2}u\phi\,dx
\]
for all $\phi\in C_c^{\infty}(\Omega)$. Hence, we have established \eqref{eq: EL eq} for all $\phi\in C_c^{\infty}(\Omega)$. Next, we show that it in fact holds for all $\phi\in\widetilde{H}^{s,p}(\Omega)$. If $u\in H^{s,p}(\R^n)$ then we deduce from H\"older's inequality that
\[
    U\vcentcolon =|(-\Delta)^{s/2}u|^{p-2}(-\Delta)^{s/2}u\in L^{p'}(\R^n)\quad \text{with}\quad \|U\|_{L^{p'}(\R^n)}=\|(-\Delta)^{s/2}u\|_{L^p(\R^n)}^{p-1}<\infty,
\]
where $1<p'<\infty$ satisfies $1/p+1/p'=1$.
Using the mapping properties of the fractional Laplacian and H\"older's inequality, we see that the derived identity holds for all $\phi\in \widetilde{H}^{s,p}(\Omega)$.

\ref{item 4 var char} Since $v\in\widetilde{H}^{s,p}(\Omega)$, we can test \eqref{eq: identity 4} by $\Bar{v}$ to obtain
    \[
    \int_{\R^n}|(-\Delta)^{s/2}v|^p\,dx=\mu\int_{\R^n}|v|^p\,dx
    \]
    and hence $\mu\in\R_+$. Therefore $w=v/\|v\|_{L^p(\R^n)}\in \mathcal{M}_p$ satisfies
    \[
        \mu=\mathcal{E}_p(w)\geq \mathcal{E}_p(u)=\lambda_{1,s,p}
    \]
    for any minimizer $u\in\mathcal{M}_p$.
\end{proof}

\begin{remark}
    The applied methods can be adapted to construct minimizers of energy functionals which has an additional term involving a weighted $L^q$ norm of $u$ similarly as in  \cite{ClassPBiharm} or even more general situations.
\end{remark}

\section{Existence theory for fractional $p$\,-biharmonic type equations}
\label{sec: existence theory}

In Section~\ref{subsec: The anisotropic fractional $p$-biharmonic operator}, we first introduce a class of anisotropic fractional $p$\,-biharmonic operators, which naturally arise in the Euler--Lagrange equations of certain energy functionals. In Section~\ref{subsec: Existence of solutions}, we then prove well-posedness results for these anisotropic fractional $p$\,-biharmonic operators in the cases of pure interior source and pure exterior value as discussed in the introduction. 

\subsection{Anisotropic fractional $p$\,-biharmonic operators}
\label{subsec: The anisotropic fractional $p$-biharmonic operator}

We first define a class of matrices which will be used throughout this article and introduce the associated energy functionals. Then we introduce the anisotropic fractional $p$\,-biharmonic operators and make several remarks in which we explain briefly the used terminology and discuss other possibilities of defining anisotropic fractional $p$\,-biharmonic operators.

\begin{definition}[Anisotropic $p$\,-energies]
    \label{def: assumptions energy functional}
    Let $m\in\N$, $s>0$ and $1<p<\infty$. We denote by $\mathbb{S}_+^m$ the class of functions $A\in L^{\infty}(\R^n;\R^{m\times m})$ taking values in the set of symmetric, positive definite matrices and satisfying the ellipticity condition
    \begin{equation}
    \label{eq: ellipticity}
        \lambda^2|v|^2\leq \langle Av,v\rangle\leq \Lambda^2 |v|^2\quad\text{a.e. in}\quad \R^n
    \end{equation}
    for all $v\in\R^m$ and a pair of real numbers $0<\lambda<\Lambda$. For all $A\in \mathbb{S}_+^m$, we define the related anisotropic $p$\,-energy by
    \[
        \mathcal{E}_{p,A}\colon H^{s,p}(\R^n;\R^m)\to \R,\quad \mathcal{E}_{p,A}(u)=\frac{1}{p}\int_{\R^n}|A^{1/2}(-\Delta)^{s/2}u|^p\,dx
    \]
    for all $u\in H^{s,p}(\R^n;\R^m)$, where $A^{1/2}$ is the unique square root of $A$.
\end{definition}

\begin{proposition}[Anisotropic fractional $p$\,-biharmonic operators]
\label{prop: definition of biharmonic op}
    Let $m\in\N$, $1<p<\infty$, $s>0$ and $A\in \mathbb{S}_+^m$ with ellipticity constants $0<\lambda<\Lambda$. Then the anisotropic fractional $p$\,-biharmonic operator $(-\Delta)^s_{p,A}$ is given by
    \[
        \langle (-\Delta)^s_{p,A} u,v\rangle=\int_{\R^n}|A^{1/2}(-\Delta)^{s/2}u|^{p-2}A(-\Delta)^{s/2}u\cdot (-\Delta)^{s/2}v\,dx
    \]
    for all $u,v\in H^{s,p}(\R^n;\R^m)$ and maps $H^{s,p}(\R^n;\R^m)$ to $(H^{s,p}(\R^n;\R^m))^*$. Moreover, there holds
    \begin{equation}
    \label{eq: estimate fractional biharm op}
        \|(-\Delta)^{s}_{p,A}u\|_{(H^{s,p}(\R^n;\R^m))^*}\leq C_0\Lambda^p\|(-\Delta)^{s/2}u\|_{L^p(\R^n)}^{p-1}
    \end{equation}
    for all $u\in H^{s,p}(\R^n;\R^m)$ for some $C_0>0$.
\end{proposition}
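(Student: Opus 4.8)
The plan is to reduce the statement to a single pointwise bound on the nonlinear integrand, after which Hölder's inequality and the mapping properties of the fractional Laplacian recalled in Section~\ref{subsec: Bessel potentials and co} finish the argument. As a preliminary step I would record the bounds for the matrix square root: since $A(x)$ is symmetric and positive definite for a.e.\ $x$, applying the ellipticity condition \eqref{eq: ellipticity} to the vector $A^{1/2}(x)v$ and using $\langle A(x)v,v\rangle=|A^{1/2}(x)v|^2$ gives $\lambda|v|\leq |A^{1/2}(x)v|\leq\Lambda|v|$ for a.e.\ $x$ and all $v\in\R^m$; in particular $|A(x)v|=|A^{1/2}(x)(A^{1/2}(x)v)|\leq\Lambda|A^{1/2}(x)v|\leq\Lambda^2|v|$, and $A^{1/2}$ is measurable and essentially bounded, so the expressions below make pointwise sense.

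Next, writing $w\vcentcolon=(-\Delta)^{s/2}u$ and $v_u\vcentcolon=|A^{1/2}w|^{p-2}Aw$, I would estimate pointwise a.e.,
\[
    |v_u|=|A^{1/2}w|^{p-2}|Aw|\leq |A^{1/2}w|^{p-2}\,\Lambda\,|A^{1/2}w|=\Lambda\,|A^{1/2}w|^{p-1}\leq\Lambda^p\,|w|^{p-1},
\]
where the last inequality uses only $p-1>0$ together with $|A^{1/2}w|\leq\Lambda|w|$. This chain is the one point of the proof worth some care: routing the estimate through $|A^{1/2}w|^{p-1}$ rather than bounding $|A^{1/2}w|^{p-2}$ separately avoids the factor $\lambda^{p-2}$ that would otherwise appear when $1<p<2$, and yields the clean constant $\Lambda^p$ uniformly in $p$. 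Since $(-\Delta)^{s/2}\colon H^{s,p}(\R^n;\R^m)\to L^p(\R^n;\R^m)$ is bounded, $w\in L^p$, hence $|w|^{p-1}\in L^{p'}(\R^n)$ and therefore $v_u\in L^{p'}(\R^n;\R^m)$ with $\norm{v_u}_{L^{p'}(\R^n;\R^m)}\leq\Lambda^p\norm{(-\Delta)^{s/2}u}_{L^p(\R^n)}^{p-1}$.

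Finally I would conclude. For fixed $u\in H^{s,p}(\R^n;\R^m)$ the assignment $v\mapsto\langle(-\Delta)^s_{p,A}u,v\rangle=\int_{\R^n}v_u\cdot(-\Delta)^{s/2}v\,dx$ is linear in $v$ and, by Hölder's inequality and the boundedness of $(-\Delta)^{s/2}\colon H^{s,p}(\R^n;\R^m)\to L^p(\R^n;\R^m)$ with some constant $C_0>0$, satisfies
\[
    |\langle(-\Delta)^s_{p,A}u,v\rangle|\leq\norm{v_u}_{L^{p'}(\R^n;\R^m)}\norm{(-\Delta)^{s/2}v}_{L^p(\R^n;\R^m)}\leq C_0\Lambda^p\norm{(-\Delta)^{s/2}u}_{L^p(\R^n)}^{p-1}\norm{v}_{H^{s,p}(\R^n;\R^m)}.
\]
Thus $(-\Delta)^s_{p,A}u$ defines a bounded linear functional on $H^{s,p}(\R^n;\R^m)$, i.e.\ an element of $(H^{s,p}(\R^n;\R^m))^*$, and taking the supremum over $\norm{v}_{H^{s,p}(\R^n;\R^m)}\leq1$ gives \eqref{eq: estimate fractional biharm op}. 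Beyond the pointwise estimate above there is no real obstacle; the map $u\mapsto(-\Delta)^s_{p,A}u$ is of course nonlinear, but this plays no role here since only linearity in the test function $v$ and the norm bound are asserted.
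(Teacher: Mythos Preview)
Your proof is correct and follows essentially the same approach as the paper: both route the pointwise estimate through $|A^{1/2}w|^{p-1}$ (the paper by writing $Aw\cdot z=A^{1/2}w\cdot A^{1/2}z$ and applying Cauchy--Schwarz, you by using $|Aw|=|A^{1/2}(A^{1/2}w)|\leq\Lambda|A^{1/2}w|$), then apply H\"older's inequality and the boundedness of $(-\Delta)^{s/2}\colon H^{s,p}\to L^p$. Your explicit remark about why one must avoid bounding $|A^{1/2}w|^{p-2}$ separately when $1<p<2$ is a nice touch that the paper leaves implicit.
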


\begin{remark}
    If $m=1$ and $A=1$, then we set $(-\Delta)^s_p\vcentcolon =(-\Delta)^s_{p,1}$ and call it fractional $p$\,-biharmonic operator. This terminology is motivated by the fact that the classical $p$\,-biharmonic operator is given by
    \[
        (-\Delta)^2_p\vcentcolon = \Delta(|\Delta u|^{p-2}\Delta u),
    \]  
    which is a nonlinear variant of the usual biharmonic operator $\Delta^2$, and this operator coincides with $(-\Delta)^s_p$ for $s=2$. 
\end{remark}

\begin{remark}
Here we want to highlight that one could define other variants of anisotropic fractional $p$\,-biharmonic operators solely based on the fractional Laplacian and a coefficient field $A$ as:
\begin{enumerate}[(i)]
    \item\label{case 1} $A(-\Delta)^s_{p,m} u$ or $(-\Delta)^{s}_{p,m} A u$, where $(-\Delta)_{p,m}^s\vcentcolon =(-\Delta)_{p,\mathbf{1}_m}^s$.
    \item\label{case 2} or $(-\Delta)^{s/2}\left(|(-\Delta)^{s/2}u|^{p-2}A(-\Delta)^{s/2}u\right)$
\end{enumerate}
As long as $A\in\mathbb{S}_+^m$ is sufficiently smooth the behaviour of solutions to the associated boundary value problem of the first two alternatives are quite similar as for the usual fractional $p$\,-biharmonic operator and thus we think there do not arise new interesting phenomena. On the other hand in the case \ref{case 2}, we observe
\[\begin{split}
&\int_{\R^n}|(-\Delta)^{s/2}u|^{p-2}A(-\Delta)^{s/2}u\cdot (-\Delta)^{s/2}v\,dx \\
&=
\int_{\R^n}|(-\Delta)^{s/2}u|^{p-2}(-\Delta)^{s/2}u\cdot ([A^T,(-\Delta)^{s/2}]v+(-\Delta)^{s/2}(A^T v)\,dx
\end{split}
\]
for all $u\in H^{s,p}(\R^n;\R^m)$, $v\in C_c^{\infty}(\Omega;\R^m)$.
If $[A^T,(-\Delta)^{s/2}]=0$ then the solution to the associated boundary value problem can again be easily obtained, but if the commutator is nonzero then this definition could still lead to interesting, nontrivial solutions.
\end{remark}

\begin{proof}
    Let $u\in H^{s,p}(\R^n;\R^m)$ and note that the assumptions on $A$ guarantee the estimate $|A^{1/2}w|\leq \Lambda|w|$ for all $w\in\R^m$. Then the Cauchy--Schwartz inequality, H\"older's inequality and the mapping properties of the fractional Laplacian imply
    \[
    \begin{split}
    |\langle (-\Delta)^s_{p,A} u,v\rangle|&=\left|\int_{\R^n}|A^{1/2}(-\Delta)^{s/2}u|^{p-2}A^{1/2}(-\Delta)^{s/2}u\cdot A^{1/2}(-\Delta)^{s/2}v\,dx\right|\\
    &\leq \||A^{1/2}(-\Delta)^{s/2}u|^{p-1}\|_{L^{p'}(\R^;\R^m)}\|A^{1/2}(-\Delta)^{s/2}v\|_{L^p(\R^n;\R^m)}\\
    &\leq \Lambda^{p}\|(-\Delta)^{s/2}u\|_{L^p(\R^n;\R^m)}^{p-1}\|(-\Delta)^{s/2}v\|_{L^p(\R^n;\R^m)}\\
    &\leq C_0\Lambda^{p}\|(-\Delta)^{s/2}u\|_{L^p(\R^n;\R^m)}^{p-1}\|v\|_{H^{s,p}(\R^n;\R^m)}
    \end{split}
    \]
    for all $v\in H^{s,p}(\R^n;\R^m)$. Taking the supremum over all nonzero $v\in H^{s,p}(\R^n;\R^m)$, we obtain the estimate \eqref{eq: estimate fractional biharm op}. The rest of the statement now follows from the mapping properties of the fractional Laplacian and we can conclude the proof.
\end{proof}

\subsection{Well-posedness results for anisotropic fractional $p$\,-biharmonic operators}
\label{subsec: Existence of solutions}

Next we introduce the used notion of weak solutions and show two closely related well-posedness results for the anisotropic fractional $p$\,-biharmonic operators. The second well-posedness result will then be used later to define the DN maps related to the exterior value problems for these operators.

\begin{definition}[Weak solutions]
\label{def: weak solutions}
     Let $m\in\N$, $1< p<\infty$, $s> 0$ and $A\in \mathbb{S}_+^m$. Suppose that $f\in H^{s,p}(\R^n;\R^m)$ and $F\in (\widetilde{H}^{s,p}(\Omega;\R^m))^*$. Then we say that $u\in H^{s,p}(\R^n;\R^m)$ is a weak solution to the exterior value problem 
     \begin{equation}
    \label{eq: weak solutions}
        \begin{split}
            (-\Delta)^s_{p,A}u&=F,\quad \text{in}\quad \Omega\\
            u&=f,\quad \text{in}\quad \Omega_e,
        \end{split}
    \end{equation}
    if there holds 
    \[
        \langle (-\Delta)^s_{p,A}u,v\rangle=\langle F,v\rangle\quad\text{and}\quad u-f\in \widetilde{H}^{s,p}(\Omega;\R^m)
    \]
    for all $v\in \widetilde{H}^{s,p}(\Omega;\R^m)$.
\end{definition}

Before proceeding we recall the following well-known estimates:

\begin{lemma}[{cf.~\cite[eq.~(2.2)]{Simon}, \cite[Lemma 5.1-5.2]{Estimate_p_Laplacian}}]
\label{lemma: Auxiliary lemma}
    Let $m\in\N$, $1<p<\infty$, then there exists $c_p>0$ such that for all $x,y\in\R^{m}$ there holds
    \[
        ( |x|^{p-2}x-|y|^{p-2}y)\cdot (x-y)\geq c_p|x-y|^p
    \]
    if $p\geq 2$ and
    \[
        (|x|^{p-2}x-|y|^{p-2}y)\cdot (x-y)\geq c_p\frac{|x-y|^2}{(|x|+|y|)^{2-p}}
    \]
    if $1<p<2$.
\end{lemma}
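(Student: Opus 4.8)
The statement is Lemma~\ref{lemma: Auxiliary lemma}, a pair of pointwise monotonicity inequalities for the vector field $x \mapsto |x|^{p-2}x$ on $\R^m$. These are classical (they appear e.g.\ in Simon and in the $p$\,-Laplacian literature cited), so the natural route is a direct, self-contained proof. First I would handle the degenerate case $p \geq 2$. The key elementary fact is the algebraic identity/inequality
\[
    (|x|^{p-2}x - |y|^{p-2}y)\cdot(x-y) = \tfrac12\big(|x|^{p-2}+|y|^{p-2}\big)|x-y|^2 + \tfrac12\big(|x|^{p-2}-|y|^{p-2}\big)(|x|^2-|y|^2),
\]
where both terms on the right are nonnegative (the second because $t \mapsto t^{p-2}$ is nondecreasing on $\R_+$ for $p \geq 2$). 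From here one reduces to showing $\tfrac12(|x|^{p-2}+|y|^{p-2})|x-y|^2 \geq c_p |x-y|^p$, which by homogeneity and symmetry we may normalize to $|x| \geq |y|$ and $|x| = 1$; then $|x-y| \leq 2$ so $|x-y|^p = |x-y|^2 |x-y|^{p-2} \leq 2^{p-2}|x-y|^2 \leq 2^{p-2}(|x|^{p-2}+|y|^{p-2})|x-y|^2$, giving $c_p = 2^{1-p}$. This dispatches the first inequality.

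\textbf{The case $1 < p < 2$.} Here the cleanest approach is via the integral representation along the segment joining $x$ and $y$. Writing $g(t) = |x + t(y-x)|^{p-2}(x+t(y-x))$ for $t \in [0,1]$, one has $|x|^{p-2}x - |y|^{p-2}y = -\int_0^1 g'(t)\,dt$, and a computation of the Jacobian of $z \mapsto |z|^{p-2}z$ shows $g'(t)\cdot(x-y) = |z_t|^{p-4}\big((p-2)(z_t\cdot(x-y))^2 + |z_t|^2|x-y|^2\big)$ where $z_t = x+t(y-x)$. Since $1 < p < 2$ we have $p - 2 \in (-1,0)$, and by Cauchy--Schwarz $(z_t\cdot(x-y))^2 \leq |z_t|^2|x-y|^2$, so the bracket is bounded below by $(p-1)|z_t|^2|x-y|^2 > 0$. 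Hence
\[
    (|x|^{p-2}x - |y|^{p-2}y)\cdot(x-y) \geq (p-1)|x-y|^2 \int_0^1 |z_t|^{p-2}\,dt.
\]
The remaining point is the lower bound $\int_0^1 |z_t|^{p-2}\,dt \geq c\,(|x|+|y|)^{p-2}$ for some dimensional-free constant $c$; since $|z_t| \leq \max(|x|,|y|) \leq |x|+|y|$ and $p - 2 < 0$, the integrand is at least $(|x|+|y|)^{p-2}$ pointwise, so in fact $c = 1$ works and we obtain $c_p = p-1$.

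\textbf{Main obstacle.} None of the steps is genuinely deep; the only mild subtlety is making sure the constants are uniform and that the degenerate locus ($z_t = 0$ for some $t$, which can only happen when $x,y$ are antiparallel and $|x|,|y|$ straddle the segment through the origin) does not cause trouble. For $1<p<2$ the integrand $|z_t|^{p-2}$ is singular there but integrable (since $p - 2 > -1$), so the integral manipulations remain valid; one may either argue by a limiting/density argument (replacing $|z|$ by $(|z|^2+\eps^2)^{1/2}$ and letting $\eps \to 0$) or simply note that both sides of the claimed inequality are continuous in $(x,y)$ and the inequality on the dense open set where the segment avoids the origin extends by continuity. I would state the $\eps$-regularization once and then suppress it. Since this lemma is explicitly quoted from \cite{Simon} and \cite{Estimate_p_Laplacian}, an acceptable alternative is to simply cite those references and give only the one-line homogeneity reduction for the reader's convenience; I would include the short proof above for completeness.
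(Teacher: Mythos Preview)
The paper does not give a proof of this lemma at all; it is stated with references to Simon and to \cite{Estimate_p_Laplacian} and then used as a black box in the well-posedness arguments. Your self-contained argument is correct and is essentially the standard one found in those references: the algebraic identity for $p\geq 2$ combined with a homogeneity reduction, and the segment-integral representation via the Jacobian of $z\mapsto|z|^{p-2}z$ for $1<p<2$.

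One small typo to fix: in the $1<p<2$ case your displayed formula is actually for $-g'(t)\cdot(x-y)$, not for $g'(t)\cdot(x-y)$. Indeed, since $z_t'=y-x$ one gets
\[
-g'(t)\cdot(x-y)=|z_t|^{p-4}\big(|z_t|^2|x-y|^2+(p-2)(z_t\cdot(x-y))^2\big),
\]
and it is this quantity that one integrates (because $|x|^{p-2}x-|y|^{p-2}y=-\int_0^1 g'(t)\,dt$). With the sign corrected the rest of your chain of inequalities is exactly right and yields $c_p=p-1$. Your treatment of the possible singularity when the segment $[x,y]$ passes through the origin (either by $\varepsilon$-regularization or by continuity on a dense set) is also the standard and correct way to handle it.
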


\begin{theorem}[Inhomogeneous equations with zero Dirichlet condition]
\label{theorem:Inhomogeneous fractional p-Laplace equation with zero exterior condition}
    Let $m\in\N$, $1< p<\infty$, $s> 0$ and $A\in \mathbb{S}_+^m$. If $\Omega\subset\R^n$ is an open bounded set and $F\in (\widetilde{H}^{s,p}(\Omega;\R^m))^*$, then there exists a unique weak solution $u\in\widetilde{H}^{s,p}(\Omega;\R^m)$ of
    \begin{equation}
    \label{eq: inhomogeneous p fractional Laplace equation}
        \begin{split}
            (-\Delta)^s_{p,A}u&=F,\quad \text{in}\quad \Omega,\\
            u&=0,\quad\, \text{in}\quad \Omega_e.
        \end{split}
    \end{equation}
    Moreover, the solution $u\in\widetilde{H}^{s,p}(\Omega;\R^m)$ satisfies
    \begin{equation}
    \label{eq: estimate solution}
        \|u\|_{H^{s,p}(\R^n;\R^m)}\leq C\|F\|_{(\widetilde{H}^{s,p}(\Omega;\R^m))^*}^{1/(p-1)}
    \end{equation}
    for some $C>0$.
\end{theorem}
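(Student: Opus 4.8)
The plan is to use the direct method of the calculus of variations on the $p$\,-energy functional restricted to the space $\widetilde{H}^{s,p}(\Omega;\R^m)$, and then deduce uniqueness from the strict monotonicity estimates in Lemma~\ref{lemma: Auxiliary lemma}. By Theorem~\ref{thm: Poincare Bounded sets} (the fractional Poincaré inequality) and the norm splitting $\|u\|_{H^{s,p}(\R^n;\R^m)}\sim \|u\|_{L^p(\R^n;\R^m)}+\|(-\Delta)^{s/2}u\|_{L^p(\R^n;\R^m)}$, we may equip $\widetilde{H}^{s,p}(\Omega;\R^m)$ with the equivalent norm $\|u\|\vcentcolon =\|(-\Delta)^{s/2}u\|_{L^p(\R^n;\R^m)}$, so this is a reflexive Banach space. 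First I would define the functional
\[
J(u)\vcentcolon =\mathcal{E}_{p,A}(u)-\langle F,u\rangle=\frac 1p\int_{\R^n}|A^{1/2}(-\Delta)^{s/2}u|^p\,dx-\langle F,u\rangle
\]
on $\widetilde{H}^{s,p}(\Omega;\R^m)$. Using the ellipticity bound $\lambda^2|w|^2\le \langle Aw,w\rangle$, i.e.~$|A^{1/2}w|\ge \lambda|w|$, together with the Poincaré inequality to absorb the linear term, one gets coercivity: $J(u)\ge \frac{\lambda^p}{p}\|u\|^p-\|F\|_{(\widetilde{H}^{s,p}(\Omega;\R^m))^*}\|u\|\to\infty$ as $\|u\|\to\infty$. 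Convexity of $w\mapsto |A^{1/2}w|^p$ (as a composition of the norm induced by the positive definite matrix $A$ with the convex increasing function $t\mapsto t^p$) plus linearity of $u\mapsto (-\Delta)^{s/2}u$ makes $u\mapsto \mathcal{E}_{p,A}(u)$ convex and continuous, hence weakly lower semicontinuous; $J$ inherits weak lower semicontinuity since $\langle F,\cdot\rangle$ is weakly continuous. Therefore $J$ attains its infimum at some $u\in \widetilde{H}^{s,p}(\Omega;\R^m)$.

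Next I would compute the Euler--Lagrange equation. For $\phi\in \widetilde{H}^{s,p}(\Omega;\R^m)$ and $\epsilon\in\R$, differentiating $\epsilon\mapsto J(u+\epsilon\phi)$ at $\epsilon=0$ (the pointwise derivative $\frac{d}{d\epsilon}|_{\epsilon=0}|A^{1/2}(w+\epsilon z)|^p=p|A^{1/2}w|^{p-2}A^{1/2}w\cdot A^{1/2}z=p|A^{1/2}w|^{p-2}Aw\cdot z$ is dominated, after applying Hölder and the mapping properties of $(-\Delta)^{s/2}$, by an integrable function uniformly in small $\epsilon$, as in the proof of Theorem~\ref{Theorem: variational characterization of fractional poincare constant}) yields
\[
\int_{\R^n}|A^{1/2}(-\Delta)^{s/2}u|^{p-2}A(-\Delta)^{s/2}u\cdot(-\Delta)^{s/2}\phi\,dx=\langle F,\phi\rangle
\]
for all $\phi\in\widetilde{H}^{s,p}(\Omega;\R^m)$, which is exactly $\langle(-\Delta)^s_{p,A}u,\phi\rangle=\langle F,\phi\rangle$; since $u\in\widetilde{H}^{s,p}(\Omega;\R^m)$ we also have $u-0\in\widetilde{H}^{s,p}(\Omega;\R^m)$, so $u$ is a weak solution of \eqref{eq: inhomogeneous p fractional Laplace equation}.

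For uniqueness, suppose $u_1,u_2$ are two weak solutions. Subtracting the weak formulations and testing with $\phi=u_1-u_2\in\widetilde{H}^{s,p}(\Omega;\R^m)$ gives
\[
\int_{\R^n}\bigl(|A^{1/2}(-\Delta)^{s/2}u_1|^{p-2}A(-\Delta)^{s/2}u_1-|A^{1/2}(-\Delta)^{s/2}u_2|^{p-2}A(-\Delta)^{s/2}u_2\bigr)\cdot(-\Delta)^{s/2}(u_1-u_2)\,dx=0.
\]
Writing $x_i=A^{1/2}(-\Delta)^{s/2}u_i$ and using $A(-\Delta)^{s/2}u_i=A^{1/2}x_i$, the integrand becomes $(|x_1|^{p-2}x_1-|x_2|^{p-2}x_2)\cdot(x_1-x_2)\ge 0$ pointwise by Lemma~\ref{lemma: Auxiliary lemma} in both ranges $p\ge2$ and $1<p<2$, with equality only when $x_1=x_2$ a.e.; hence $A^{1/2}(-\Delta)^{s/2}(u_1-u_2)=0$ a.e., and since $A^{1/2}$ is invertible, $(-\Delta)^{s/2}(u_1-u_2)=0$, so $\|u_1-u_2\|=0$ and $u_1=u_2$. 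Finally, for the quantitative bound \eqref{eq: estimate solution}, test the weak formulation of $u$ with $\phi=u$: by Lemma~\ref{lemma: Auxiliary lemma} with $x_2=0$ (for $p\ge2$, giving $c_p|x_1|^p\le|x_1|^{p-2}x_1\cdot x_1$; for $1<p<2$ one uses directly $|x_1|^{p-2}x_1\cdot x_1=|x_1|^p$) and ellipticity we get $c\,\lambda^p\|u\|^p\le \langle F,u\rangle\le \|F\|_{(\widetilde{H}^{s,p}(\Omega;\R^m))^*}\|u\|$, whence $\|u\|^{p-1}\le C\|F\|_{(\widetilde{H}^{s,p}(\Omega;\R^m))^*}$, and passing to the equivalent $H^{s,p}$ norm via Poincaré gives \eqref{eq: estimate solution}. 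The main technical point to handle carefully is the differentiation under the integral sign to obtain the Euler--Lagrange equation in the anisotropic, possibly non-smooth $A$ setting; but this is routine once one records the pointwise bound $\bigl||A^{1/2}(w+\epsilon z)|^{p-2}A(w+\epsilon z)\bigr|\le \Lambda^p(|w|+|z|)^{p-1}$ valid for $|\epsilon|\le1$ and invokes dominated convergence together with Hölder's inequality as in Section~\ref{sec: varioational char}.
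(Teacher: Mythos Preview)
Your proof is correct and follows essentially the same approach as the paper: direct method on the functional $\mathcal{E}_{p,A}(u)-\langle F,u\rangle$ over $\widetilde{H}^{s,p}(\Omega;\R^m)$ equipped with the Poincar\'e-equivalent norm, Euler--Lagrange by differentiation under the integral, uniqueness via the monotonicity inequalities of Lemma~\ref{lemma: Auxiliary lemma}, and the a~priori bound by testing with $u$. The only cosmetic differences are that the paper uses Young's inequality explicitly for coercivity and first derives the Euler--Lagrange equation for $\phi\in C_c^\infty(\Omega;\R^m)$ before extending by density, whereas you argue directly for $\phi\in\widetilde{H}^{s,p}(\Omega;\R^m)$.
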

\begin{proof}
    Assume that $A\in\mathbb{S}_+^m$ satisfies \eqref{eq: ellipticity} with ellipticity constants $0<\lambda<\Lambda$. Using the fractional Poincar\'e inequality (Theorem~\ref{thm: Poincare Bounded sets}) and the splitting 
    \[
        \|u\|_{H^{s,p}(\R^n;\R^m)}\sim \|u\|_{L^p(\R^n;\R^m)}+\|(-\Delta)^{s/2}u\|_{L^p(\R^n;\R^m)}\quad\text{for all}\quad u\in H^{s,p}(\R^n;\R^m),
    \]
    we can endow $\widetilde{H}^{s,p}(\Omega;\R^m)$ with the equivalent norm 
    \[
      \|u\|_{\widetilde{H}^{s,p}(\Omega;\R^m)}\vcentcolon =\|(-\Delta)^{s/2}u\|_{L^p(\R^n;\R^m)}.
    \]
     Then $\widetilde{H}^{s,p}(\Omega;\R^m)$ with the norm $\|\cdot\|_{\widetilde{H}^{s,p}(\Omega;\R^m)}$ is a reflexive Banach space. More precisely, this follows from the fact that $\widetilde{H}^{s,p}(\Omega;\R^m)$ with $\|\cdot\|_{H^{s,p}(\R^n;\R^m)}$ is a reflexive Banach spaces as a closed subspace of a reflexive Banach space, but the first one is isomorphic to $\widetilde{H}^{s,p}(\Omega;\R^m)$ endowed with $\|\cdot\|_{\widetilde{H}^{s,p}(\Omega;\R^m)}$ and so the latter is itself a reflexive Banach space. Next, we define
    \[
        \mathcal{E}_{p,A,F}(u)=\mathcal{E}_{p,A}(u)-\langle F,u\rangle
    \]
    for all $u\in\widetilde{H}^{s,p}(\Omega;\R^m)$, where $\mathcal{E}_{p,A}$ is the anisotropic $p$\,-energy from Definition~\ref{def: assumptions energy functional}. By assumption we have $|A^{1/2}w|\geq \lambda|w|$ for all $w\in\R^m$, and hence Young's inequality implies
    \[
    \begin{split}
        |\mathcal{E}_{p,A,F}(u)|&\geq \frac{\lambda^p}{p}\|u\|_{\widetilde{H}^{s,p}(\Omega;\R^m)}^p-\|F\|_{(\widetilde{H}^{s,p}(\Omega;\R^m))^*}\|u\|_{\widetilde{H}^{s,p}(\Omega;\R^m)}\\
        &\geq \left(\lambda^p/p -\epsilon\right)\|u\|_{\widetilde{H}^{s,p}(\Omega;\R^m)}^p-C_{\epsilon}\|F\|_{(\widetilde{H}^{s,p}(\Omega;\R^m))^*}^{p'}
    \end{split}
    \]
    for all $\epsilon>0$ and $C_{\epsilon}=(\epsilon p)^{-p'/p}/p'$. Hence by choosing $\epsilon=\lambda^p/(2p)$, we obtain
    \[
        \|u\|_{\widetilde{H}^{s,p}(\Omega;\R^m)}^p\leq C\left(|\mathcal{E}_{p,A,F}(u)|+\|F\|_{(\widetilde{H}^{s,p}(\Omega;\R^m))^*}^{p'}\right)
    \]
    for some $C>0$ and therefore $\mathcal{E}_{p,A,F}$ is coercive, in the sense that 
    \[
        \mathcal{E}_{p,A,F}(u)\to\infty\quad\text{if}\quad \|u\|_{\widetilde{H}^{s,p}(\Omega;\R^m)}\to\infty.
    \]
    Next note that $\mathcal{E}_{p,A}$ is a convex, continuous functional on $\widetilde{H}^{s,p}(\Omega;\R^m)$ and hence using \cite[Proposition~2.10]{ConvexityOptimization}, we deduce that $\mathcal{E}_{p,A,F}$ is weakly lower semi-continuous on $\widetilde{H}^{s,p}(\Omega;\R^m)$. 
     Therefore, by \cite[Theorem 1.2]{Variational-Methods} there is a minimizer $u\in\widetilde{H}^{s,p}(\Omega;\R^m)$ of $\mathcal{E}_{p,A,F}$ for all $1<p<\infty$. 
     
     By H\"older's inequality and the dominated convergence theorem, we see that $\mathcal{E}_{p,A,F}$ is a $C^1-$functional for all $1<p<\infty$. Let $u_{\epsilon}=u+\epsilon\phi$ with $\phi\in C_c^{\infty}(\Omega;\R^m)$ and $\epsilon\in\R$. Since $u$ is a minimizer of $\mathcal{E}_{p,A,F}$, the function $\epsilon\mapsto \mathcal{E}_{p,A,F}(u_{\epsilon})$ attains its minimum at $\epsilon=0$. Therefore, by H\"older's inequality and the dominated convergence theorem, we obtain
    \[
    \begin{split}
        0&=\left.\frac{d}{d\epsilon}\right|_{\epsilon=0}\mathcal{E}_{p,A,F}(u_{\epsilon})=\int_{\R^n}|A^{1/2}(-\Delta)^{s/2}u|^{p-2}A^{1/2}(-\Delta)^{s/2}u\cdot A^{1/2}(-\Delta)^{s/2}\phi\,dx-\langle F,\phi\rangle\\
        &=\int_{\R^n}|A^{1/2}(-\Delta)^{s/2}u|^{p-2}A(-\Delta)^{s/2}u\cdot (-\Delta)^{s/2}\phi\,dx-\langle F,\phi\rangle.
    \end{split}
    \]
    By approximation, we deduce that the minimizer $u$ solves \eqref{eq: inhomogeneous p fractional Laplace equation} as asserted. For the uniqueness statement, we distinguish the two cases $2\leq p<\infty$ and $1<p<2$:
    
    \begin{enumerate}[(i)]
    \item First assume that $2\leq p<\infty$. By applying Lemma~\ref{lemma: Auxiliary lemma} to the vectors $x=u_s,\,y=v_s$, where $u_s=A^{1/2}(-\Delta)^{s/2}u$ and $v_s=A^{1/2}(-\Delta)^{s/2}v$ with $u,v\in H^{s,p}(\R^n;\R^m)$, we obtain the following strong monotonicity property
        \begin{equation}
        \label{eq: Strict monotonicity property}
        \begin{split}
            &\int_{\R^n}( |A^{1/2}(-\Delta)^{s/2}u|^{p-2}A(-\Delta)^{s/2}u
            -|A^{1/2}(-\Delta)^{s/2}v|^{p-2}A(-\Delta)^{s/2}v)\\
            &\quad\cdot ((-\Delta)^{s/2}u-(-\Delta)^{s/2}v)\,dx\\
            &\geq \lambda c_p\|(-\Delta)^{s/2}u-(-\Delta)^{s/2}v\|^p_{L^p(\R^n;\R^m)}
        \end{split}
        \end{equation}
        for all $u,v\in H^{s,p}(\R^n;\R^m)$. If $u,v\in \widetilde{H}^{s,p}(\Omega;\R^m)$ then $w=u-v\in\widetilde{H}^{s,p}(\Omega;\R^m)$ and hence if they solve \eqref{eq: inhomogeneous p fractional Laplace equation} the left hand side of \eqref{eq: Strict monotonicity property} is zero and therefore $\|u-v\|_{\widetilde{H}^{s,p}(\Omega;\R^m)}=0$, which in turn implies $u=v$ in $\widetilde{H}^{s,p}(\Omega;\R^m)$. Thus, the constructed minimizer is the unique solution.
        \\
    \item Next let $1<p<2$. We apply the second identity in Lemma~\ref{lemma: Auxiliary lemma} to $x=u_s,y=v_s$, where $u_s=A^{1/2}(-\Delta)^{s/2}u$, $v_s=A^{1/2}(-\Delta)^{s/2}v$ with $u,v\in H^{s,p}(\R^n;\R^m)$, raise it to the power $p/2$, integrate over $\R^n$ and use H\"older's inequality to obtain
    \[
    \begin{split}
        &c_p^{p/2}\int_{\R^n}|u_s-v_s|^p\,dx\leq\int_{\R^n}(( |u_s|^{p-2}u_s-|v_s|^{p-2}v_s)\cdot(u_s-v_s))^{p/2}(|u_s|+|v_s|)^{(2-p)p/2}\,dx\\
        &\leq \|( |u_s|^{p-2}u_s-|v_s|^{p-2}v_s)\cdot(u_s-v_s)\|_{L^1(\R^n)}^{p/2}\||u_s|+|v_s|\|_{L^p(\R^n)}^{p(2-p)/2}\\
        &\leq \left(\int_{\R^n}( |u_s|^{p-2}u_s-|v_s|^{p-2}v_s)\cdot(u_s-v_s)\,dx\right)^{p/2}(\|u_s\|_{L^p(\R^n;\R^m)}+\|v_s\|_{L^p(\R^n;\R^m)})^{p(2-p)/2}
    \end{split}
    \]
    where we used $\frac{2-p}{2}+\frac{p}{2}=1$ and
    \[
        \begin{split}
            ( |u_s|^{p-2}u_s-|v_s|^{p-2}v_s)\cdot (u_s-v_s)^{p/2}\in L^{2/p}(\R^n),\quad (|u_s|+|v_s|)^{(2-p)p/2}\in L^{2/(2-p)}(\R^n).
        \end{split}
    \]
    Hence, by the ellipticity condition on $A$ there holds
    \begin{equation}
    \label{eq: monotonicity subquadratic case}
        \begin{split}
            &\|u-v\|_{\widetilde{H}^{s,p}(\Omega;\R^m)}\\
            &\leq \frac{\Lambda^{1-p/2}}{c_p^{1/2}\lambda}(\|u\|_{\widetilde{H}^{s,p}(\Omega;\R^m)}+\|v\|_{\widetilde{H}^{s,p}(\Omega;\R^m)})^{1-p/2}\\ 
            &\quad\cdot\left(\int_{\R^n}( |A^{1/2}(-\Delta)^{s/2}u|^{p-2}A(-\Delta)^{s/2}u\right.\\
            &\quad\left.-|A^{1/2}(-\Delta)^{s/2}v|^{p-2}A(-\Delta)^{s/2}v)\cdot((-\Delta)^{s/2}u-(-\Delta)^{s/2}v)\,dx\right)^{1/2}
        \end{split}
    \end{equation}
    for all $u,v\in H^{s,p}(\R^n;\R^m)$. If $u,v\in \widetilde{H}^{s,p}(\Omega;\R^m)$ then $w=u-v\in\widetilde{H}^{s,p}(\Omega;\R^m)$ and hence if they satisfy \eqref{eq: inhomogeneous p fractional Laplace equation} the second term on the right hand side of \eqref{eq: monotonicity subquadratic case} is zero and hence $\|u-v\|_{\widetilde{H}^{s,p}(\Omega;\R^m)}=0$, which in turn implies $u=v$ in $\widetilde{H}^{s,p}(\Omega;\R^m)$. Therefore, the constructed minimizer is the unique solution. 
    \end{enumerate}
    
    Estimate \eqref{eq: estimate solution} follows directly by testing \eqref{eq: inhomogeneous p fractional Laplace equation} with $u\in\widetilde{H}^{s,p}(\Omega)$ and using $A\in\mathbb{S}_+^m$. In fact, by Poincar\'e's inequality we have
    \[
    \begin{split}
        \lambda^p\|(-\Delta)^{s/2}u\|_{L^p(\R^n;\R^m)}^p&\leq \int_{\R^n}|A^{1/2}(-\Delta)^{s/2}u|^{p-2}A(-\Delta)^{s/2}u\cdot(-\Delta)^{s/2}u\,dx\\
        &=\langle F,u\rangle  \leq \|F\|_{(\widetilde{H}^{s,p}(\Omega;\R^m))^*}\|u\|_{H^{s,p}(\R^n;\R^m)}\\
    &\leq C\|F\|_{(\widetilde{H}^{s,p}(\Omega;\R^m))^*}\|(-\Delta)^{s/2}u\|_{L^p(\R^n;\R^m)}.
    \end{split}
    \]
    This shows the estimate \eqref{eq: estimate solution} and we can conclude the proof.
\end{proof}

\begin{theorem}[Homogeneous equations with nonzero Dirichlet condition]
\label{thm: Homogeneous fractional p-Laplace equation}
    Let $m\in\N$, $1< p<\infty$, $s> 0$ and $A\in \mathbb{S}_+^m$. If $\Omega\subset\R^n$ is an open bounded set and $u_0\in H^{s,p}(\R^n;\R^m)$, then there exists a unique weak solution $u\in H^{s,p}(\R^n;\R^m)$ of
    \begin{equation}
    \label{eq: PDE exterior condition}
        \begin{split}
            (-\Delta)^s_{p,A}u&=0,\quad \,\,\,\text{in}\quad \Omega,\\
            u&=u_0,\quad \text{in}\quad \Omega_e.
        \end{split}
    \end{equation}
   Moreover, the unique solution $u\in H^{s,p}(\R^n;\R^m)$ satisfies the estimate
    \begin{equation}
    \label{eq: estimate solution with exterior condition}
        \|(-\Delta)^{s/2}u\|_{L^p(\R^n;\R^m)}\leq (\Lambda/\lambda)^p\|(-\Delta)^{s/2}u_0\|_{L^p(\R^n;\R^m)}.
    \end{equation}
\end{theorem}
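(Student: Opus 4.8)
The plan is to reduce the inhomogeneous Dirichlet problem to a minimization over an affine subspace, essentially reusing the machinery of Theorem~\ref{theorem:Inhomogeneous fractional p-Laplace equation with zero exterior condition}. Write $u = u_0 + w$ with the new unknown $w \in \widetilde{H}^{s,p}(\Omega;\R^m)$; then the exterior condition in \eqref{eq: PDE exterior condition} holds automatically and the problem becomes to find $w$ with $\langle (-\Delta)^s_{p,A}(u_0+w),v\rangle = 0$ for all $v \in \widetilde{H}^{s,p}(\Omega;\R^m)$. I would produce such a $w$ as a minimizer over $\widetilde{H}^{s,p}(\Omega;\R^m)$ of the shifted $p$\,-energy $w \mapsto \mathcal{E}_{p,A}(u_0+w)$, where $\mathcal{E}_{p,A}$ is the anisotropic $p$\,-energy of Definition~\ref{def: assumptions energy functional}.

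For the direct method, equip $\widetilde{H}^{s,p}(\Omega;\R^m)$ with the equivalent norm $\|w\|_{\widetilde{H}^{s,p}(\Omega;\R^m)} = \|(-\Delta)^{s/2}w\|_{L^p(\R^n;\R^m)}$ provided by the fractional Poincar\'e inequality (Theorem~\ref{thm: Poincare Bounded sets}); this makes it a reflexive Banach space, as already observed in the proof of Theorem~\ref{theorem:Inhomogeneous fractional p-Laplace equation with zero exterior condition}. Coercivity on the affine set follows from $|A^{1/2}\xi| \geq \lambda|\xi|$ and the triangle inequality in $L^p$:
\[
\mathcal{E}_{p,A}(u_0+w) \geq \frac{\lambda^p}{p}\left(\|(-\Delta)^{s/2}w\|_{L^p(\R^n;\R^m)} - \|(-\Delta)^{s/2}u_0\|_{L^p(\R^n;\R^m)}\right)_+^p \to \infty
\]
as $\|w\|_{\widetilde{H}^{s,p}(\Omega;\R^m)} \to \infty$. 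Since $\xi \mapsto |A^{1/2}\xi|$ is a seminorm and $t \mapsto t^p$ is convex increasing, $\mathcal{E}_{p,A}$ is convex and continuous on $H^{s,p}(\R^n;\R^m)$, hence $w \mapsto \mathcal{E}_{p,A}(u_0+w)$ is convex and continuous, therefore weakly lower semicontinuous by \cite[Proposition~2.10]{ConvexityOptimization}; thus a minimizer $w$ exists by \cite[Theorem~1.2]{Variational-Methods}. As in the previous proof, $\mathcal{E}_{p,A}$ is a $C^1$ functional (H\"older plus dominated convergence), so differentiating $\epsilon \mapsto \mathcal{E}_{p,A}(u_0+w+\epsilon\phi)$ at $\epsilon=0$ for $\phi \in C_c^\infty(\Omega;\R^m)$ yields $\langle (-\Delta)^s_{p,A}(u_0+w),\phi\rangle = 0$; density of $C_c^\infty(\Omega;\R^m)$ in $\widetilde{H}^{s,p}(\Omega;\R^m)$ together with the continuity bound of Proposition~\ref{prop: definition of biharmonic op} extends this to all test functions, so $u = u_0 + w$ is a weak solution.

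Uniqueness and the a priori estimate are then short. If $u_1, u_2$ are two weak solutions, then $u_1 - u_2 \in \widetilde{H}^{s,p}(\Omega;\R^m)$, so subtracting the two weak formulations and testing with $v = u_1 - u_2$ makes the left-hand side vanish; the monotonicity inequalities \eqref{eq: Strict monotonicity property} for $p \geq 2$ and \eqref{eq: monotonicity subquadratic case} for $1 < p < 2$, applied exactly as in Theorem~\ref{theorem:Inhomogeneous fractional p-Laplace equation with zero exterior condition}, force $\|(-\Delta)^{s/2}(u_1-u_2)\|_{L^p(\R^n;\R^m)} = 0$, i.e.\ $u_1 = u_2$. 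For \eqref{eq: estimate solution with exterior condition}, testing the weak formulation with $v = u - u_0 \in \widetilde{H}^{s,p}(\Omega;\R^m)$ gives
\[
\int_{\R^n}|A^{1/2}(-\Delta)^{s/2}u|^p\,dx = \int_{\R^n}|A^{1/2}(-\Delta)^{s/2}u|^{p-2}A(-\Delta)^{s/2}u\cdot(-\Delta)^{s/2}u_0\,dx;
\]
the left side is at least $\lambda^p\|(-\Delta)^{s/2}u\|_{L^p(\R^n;\R^m)}^p$, while Cauchy--Schwarz in $\R^m$, H\"older with exponents $p',p$, the identity $(p-1)p'=p$, and $|A^{1/2}\xi|\leq\Lambda|\xi|$ bound the right side by $\Lambda^p\|(-\Delta)^{s/2}u\|_{L^p(\R^n;\R^m)}^{p-1}\|(-\Delta)^{s/2}u_0\|_{L^p(\R^n;\R^m)}$; dividing (trivial if the left side vanishes) yields the constant $(\Lambda/\lambda)^p$.

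I do not expect a serious obstacle here: the argument is a variation of the preceding theorem. The only genuinely new points are that coercivity must be checked on the affine set $u_0 + \widetilde{H}^{s,p}(\Omega;\R^m)$ rather than on a linear subspace, and that in the a priori estimate the admissible test function is $u - u_0$ (not $u$ itself), so one must track how $u_0$ enters; both are handled by the triangle inequality and H\"older's inequality as above.
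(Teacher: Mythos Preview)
Your proposal is correct and follows essentially the same approach as the paper: direct method on the affine constraint set (the paper works directly on $\widetilde{H}^{s,p}_{u_0}(\Omega;\R^m)$ and first checks it is weakly closed in $H^{s,p}(\R^n;\R^m)$, while you make the equivalent substitution $u=u_0+w$ and minimize over the linear subspace), then the Euler--Lagrange equation, the monotonicity inequalities \eqref{eq: Strict monotonicity property}/\eqref{eq: monotonicity subquadratic case} for uniqueness, and testing with $u-u_0$ for the a priori estimate. The tools and the final estimate are obtained identically.
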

\begin{proof}
    Assume that the ellipticity condition for $A$ holds with parameters $0<\lambda<\Lambda<\infty$. Let us define the affine subspace
    \[
        \widetilde{H}^{s,p}_{u_0}(\Omega;\R^m)=\{u\in H^{s,p}(\R^n;\R^m)\colon\, u-u_0\in\widetilde{H}^{s,p}(\Omega;\R^m)\}\subset H^{s,p}(\R^n;\R^m)
    \]
and denote by $\mathcal{E}'_{p,A}$ the restriction of $\mathcal{E}_{p,A}$ to $\widetilde{H}^{s,p}_{u_0}(\Omega;\R^m)$. First observe that $\widetilde{H}^{s,p}_{u_0}(\Omega;\R^m)$ is weakly closed in the reflexive Banach space $H^{s,p}(\R^n;\R^m)$. In fact, if $(u_n)_{n\in\N}\subset \widetilde{H}^{s,p}_{u_0}(\Omega;\R^m)$ converges weakly to $u\in H^{s,p}(\R^n;\R^m)$ in $H^{s,p}(\R^n;\R^m)$, then $u_n-u_0$ converges weakly to $u-u_0\in H^{s,p}(\R^n;\R^m)$ in $H^{s,p}(\R^n;\R^m)$. As weak limits are contained in the weak closure, the weak closure of convex sets coincide with the strong closure and $\widetilde{H}^{s,p}(\Omega;\R^m)$ is closed in $H^{s,p}(\R^n;\R^m)$ we deduce that $u-u_0\in \widetilde{H}^{s,p}(\Omega;\R^m)$. Therefore we have shown that $\widetilde{H}^{s,p}_{u_0}(\Omega;\R^m)$ is weakly closed in $H^{s,p}(\R^n;\R^m)$. 

Using $|A^{1/2}w|\geq \lambda|w|$ for all $w\in\R^m$, the fractional Poincar\'e inequality (Theorem~\ref{thm: Poincare Bounded sets} ), the usual splitting of the Bessel potential norm $\|\cdot\|_{H^{s,p}(\R^n;\R^m)}$ and the convexity of $x\mapsto |x|^p$ we deduce 
\[
\begin{split}
    |\mathcal{E}'_{p,A}(u)|&\geq C\|u\|_{H^{s,p}(\R^n;\R^m)}^p-C'\|u_0\|_{H^{s,p}(\R^n;\R^m)}^p
\end{split}
\]
for all $u\in \widetilde{H}^{s,p}_{u_0}(\Omega;\R^m)$ and hence $\mathcal{E}'_{p,A}$ is coercive on $\widetilde{H}^{s,p}_{u_0}(\Omega;\R^m)$. By the same argument as in the proof of Theorem~\ref{theorem:Inhomogeneous fractional p-Laplace equation with zero exterior condition} the functional $\mathcal{E}'_{p,A}$ is weakly lower semi-continuous on $\widetilde{H}^{s,p}_{u_0}(\Omega;\R^m)$ with respect to the norm of the space $H^{s,p}(\R^n;\R^m)$. Therefore by \cite[Theorem 1.2]{Variational-Methods} there is a minimizer $u\in\widetilde{H}^{s,p}_{u_0}(\Omega;\R^m)$ of $\mathcal{E}'_{p,A}$ for all $1<p<\infty$. Repeating the argument of Theorem~\ref{theorem:Inhomogeneous fractional p-Laplace equation with zero exterior condition} we see that the minimizer is unique. 

To show the estimate \eqref{eq: estimate solution with exterior condition} we proceed similarly as in the proof of Theorem~\ref{theorem:Inhomogeneous fractional p-Laplace equation with zero exterior condition}, namely we test \eqref{eq: PDE exterior condition} by $u-u_0\in\widetilde{H}^{s,p}(\Omega;\R^m)$, use $A\in\mathbb{S}_+^m$ and apply H\"older's inequality to deduce
    \[
    \begin{split}
        \lambda^p\|(-\Delta)^{s/2}u\|_{L^p(\R^n;\R^m)}^p&\leq \int_{\R^n}|A^{1/2}(-\Delta)^{s/2}u|^{p-2}A(-\Delta)^{s/2}u\cdot(-\Delta)^{s/2}u\,dx\\
        &=\int_{\R^n}|A^{1/2}(-\Delta)^{s/2}u|^{p-2}A(-\Delta)^{s/2}u\cdot(-\Delta)^{s/2}(u-u_0)\,dx\\
        &+\int_{\R^n}|A^{1/2}(-\Delta)^{s/2}u|^{p-2}A(-\Delta)^{s/2}u\cdot(-\Delta)^{s/2}u_0\,dx\\
        &=\int_{\R^n}|A^{1/2}(-\Delta)^{s/2}u|^{p-2}A(-\Delta)^{s/2}u\cdot(-\Delta)^{s/2}u_0\,dx\\
        &\leq \Lambda^p\|(-\Delta)^{s/2}u\|^{p-1}_{L^p(\R^n;\R^m)}\|(-\Delta)^{s/2}u_0\|_{L^p(\R^n;\R^m)}
    \end{split}
    \]
    which in turn implies \eqref{eq: estimate solution with exterior condition}.
\end{proof}

\section{Abstract trace space and DN maps for anisotropic fractional $p$\,-biharmonic operator}
\label{subsec: trace space and DN maps}

In this section, we introduce the basic notions needed to set up the inverse problem related to the anisotropic fractional $p$\,-biharmonic operators, namely the abstract trace space and the DN map. 

\begin{definition}
    Let $m\in\N$, $1< p<\infty$, $s> 0$ and $\Omega\subset \R^n$ be an open set. Then we define the abstract trace space as $X_p=\nicefrac{H^{s,p}(\R^n;\R^m)}{\widetilde{H}^{s,p}(\Omega;\R^m)}$ and endow it with the quotient norm
    \[
    \|[f]\|_{X_p}=\inf_{\phi\in\widetilde{H}^{s,p}(\Omega;\R^m)}\|f-\phi\|_{H^{s,p}(\R^n;\R^m)}
    \]
    for all $[f]\in X_p$.
\end{definition}
\begin{remark}
By standard arguments, one can easily show that $X_p$ is a Banach space as long as $\widetilde{H}^{s,p}(\Omega;\R^m)\neq H^{s,p}(\R^n;\R^m)$. To simplify the notation, we will usually denote elements in $X_p$ by $f$ instead of the more precise notation $[f]$.
\end{remark}

In the next lemma, we show that for any $u_0\in X_p$ there is a unique solution $u\in H^{s,p}(\R^n;\R^m)$ of the related pure exterior value problem for any anisotropic fractional $p$\,-biharmonic operator.

\begin{lemma}
\label{lemma: uniqueness of solutions on trace space}
    Let $m\in\N$, $1< p<\infty$, $s> 0$, $A\in \mathbb{S}_+^m$ and assume $\Omega\subset\R^n$ is an open bounded set. If $u^1_0,u_0^2\in H^{s,p}(\R^n;\R^m)$ are such that $u_0^1-u_0^2\in\widetilde{H}^{s,p}(\Omega;\R^m)$ and suppose $u_1,u_2\in H^{s,p}(\R^n;\R^m)$ are the unique solutions of the exterior value problems
    \[
        \begin{split}
            (-\Delta)^s_{p,A}u_1&=0,\quad\,\,\, \text{in}\quad \Omega,\\
            u&=u_0^1,\quad \text{in}\quad \Omega_e,
        \end{split}
    \]
    and 
    \[
        \begin{split}
            (-\Delta)^s_{p,A}u_2&=0,\quad \text{in}\quad\,\,\, \Omega,\\
            u&=u_0^2,\quad \text{in}\quad \Omega_e,
        \end{split}
    \]
    then $u_1\equiv u_2$ in $\R^n$.
\end{lemma}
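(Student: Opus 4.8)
The statement is essentially a reformulation of the uniqueness part of Theorem~\ref{thm: Homogeneous fractional p-Laplace equation}, and the proof I would give is short. The idea is to observe that $u_1$ is not only the solution with exterior value $u_0^1$ but is \emph{also} a weak solution of the exterior value problem with exterior value $u_0^2$; then the uniqueness already established in Theorem~\ref{thm: Homogeneous fractional p-Laplace equation} forces $u_1 \equiv u_2$.

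\textbf{Key steps.} First I would recall that, by Definition~\ref{def: weak solutions}, the hypothesis that $u_1$ is the weak solution with exterior value $u_0^1$ means $u_1 - u_0^1 \in \widetilde{H}^{s,p}(\Omega;\R^m)$ and $\langle (-\Delta)^s_{p,A} u_1, v\rangle = 0$ for all $v \in \widetilde{H}^{s,p}(\Omega;\R^m)$. Next, since $\widetilde{H}^{s,p}(\Omega;\R^m)$ is a linear subspace of $H^{s,p}(\R^n;\R^m)$ and, by assumption, $u_0^1 - u_0^2 \in \widetilde{H}^{s,p}(\Omega;\R^m)$, I would write
\[
    u_1 - u_0^2 = (u_1 - u_0^1) + (u_0^1 - u_0^2) \in \widetilde{H}^{s,p}(\Omega;\R^m).
\]
Combined with the vanishing of $(-\Delta)^s_{p,A} u_1$ tested against $\widetilde{H}^{s,p}(\Omega;\R^m)$, this shows that $u_1 \in H^{s,p}(\R^n;\R^m)$ is a weak solution of \eqref{eq: PDE exterior condition} with $u_0$ replaced by $u_0^2$. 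Since $u_2$ is, by hypothesis, also a weak solution of the very same problem, the uniqueness assertion of Theorem~\ref{thm: Homogeneous fractional p-Laplace equation} yields $u_1 \equiv u_2$ in $\R^n$.

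\textbf{Alternative and obstacle.} If one prefers to avoid invoking the uniqueness statement as a black box, one can instead set $w \vcentcolon = u_1 - u_2$, note as above that $w \in \widetilde{H}^{s,p}(\Omega;\R^m)$, test the two weak formulations against $w$, subtract, and apply the strong monotonicity inequality \eqref{eq: Strict monotonicity property} (for $p \geq 2$) or the subquadratic estimate \eqref{eq: monotonicity subquadratic case} (for $1 < p < 2$) exactly as in the proof of Theorem~\ref{theorem:Inhomogeneous fractional p-Laplace equation with zero exterior condition} to conclude $\|(-\Delta)^{s/2}w\|_{L^p(\R^n;\R^m)} = 0$, hence $w \equiv 0$ by the fractional Poincaré inequality (Theorem~\ref{thm: Poincare Bounded sets}). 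There is no real obstacle here; the only point requiring care is the bookkeeping that $u_1 - u_0^2$ (equivalently $u_1 - u_2$) genuinely lies in $\widetilde{H}^{s,p}(\Omega;\R^m)$, which is precisely where the hypothesis $u_0^1 - u_0^2 \in \widetilde{H}^{s,p}(\Omega;\R^m)$ is used, and this is immediate from linearity of the subspace.
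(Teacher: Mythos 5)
Your proposal is correct, and your primary argument takes a slightly different---and arguably cleaner---route than the paper's. The paper proceeds directly: it writes $u_1 - u_2 = (u_1 - u_0^1) - (u_2 - u_0^2) + (u_0^1 - u_0^2) \in \widetilde{H}^{s,p}(\Omega;\R^m)$ and then applies the strong monotonicity estimates \eqref{eq: Strict monotonicity property} and \eqref{eq: monotonicity subquadratic case} to conclude $u_1 \equiv u_2$; this is precisely your ``alternative.'' Your primary route instead notes that $u_1 - u_0^2 = (u_1 - u_0^1) + (u_0^1 - u_0^2) \in \widetilde{H}^{s,p}(\Omega;\R^m)$, so $u_1$ is a weak solution of the exterior problem with datum $u_0^2$, and then invokes the uniqueness clause of Theorem~\ref{thm: Homogeneous fractional p-Laplace equation} as a black box. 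Both hinge on exactly the same key observation---the hypothesis $u_0^1 - u_0^2 \in \widetilde{H}^{s,p}(\Omega;\R^m)$ lets you shift the exterior datum without leaving the admissible class $\widetilde{H}^{s,p}_{u_0^2}(\Omega;\R^m)$---but your black-box reduction spares the reader from re-running the $p \geq 2$ versus $1 < p < 2$ case split and makes the logical dependency on Theorem~\ref{thm: Homogeneous fractional p-Laplace equation} explicit, whereas the paper's version keeps the mechanism visible at the level of the monotonicity inequalities. Either is complete.
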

\begin{proof}
    First of all note that by assumption, we have
    \[
        u_1-u_2=(u_1-u_0^1)-(u_2-u_0^2)+(u_0^1-u_0^2)\in \widetilde{H}^{s,p}(\Omega;\R^m).
    \]
    Therefore using the strong monotonicity property (eq.~\eqref{eq: Strict monotonicity property} and \eqref{eq: monotonicity subquadratic case}) and the fact that $u_1,u_2$ are solutions of $(-\Delta)^s_{p,A}v=0$ in $\Omega$, we deduce $u_1\equiv u_2$ in $\R^n$.
\end{proof}

\begin{lemma}[DN map]
\label{lemma: DN maps}
    Let $m\in\N$, $1< p<\infty$, $s> 0$, $A\in \mathbb{S}_+^m$ and assume $\Omega\subset\R^n$ is an open bounded set. Then the DN map $\Lambda_{p,A}\colon X_p\to X_p^*$ given by
    \[
        \langle\Lambda_{p,A}(f),g\rangle=\mathcal{A}_{p,A}(u_f,g)
    \]
    for $f,g\in X_p$ is well-defined, where $u_f$ is the unique weak solution to the homogeneous fractional $p$\,-biharmonic system with exterior value $f$ and $\mathcal{A}_{p,A}\colon H^{s,p}(\R^n;\R^m)\times H^{s,p}(\R^n;\R^m)\to \R$ is defined as
    \[
        \mathcal{A}_{p,A}(u,v)=\int_{\R^n}|A^{1/2}(-\Delta)^{s/2}u|^{p-2}A(-\Delta)^{s/2}u\cdot (-\Delta)^{s/2}v\,dx
    \]
    for all $u,v\in H^{s,p}(\R^n;\R^m)$. Moreover, there exists $C>0$ such that
    \[
        |\langle\Lambda_{p,A}(f),g\rangle|\leq C\|f\|_{X_p}^{p-1}\|g\|_{X_p}
    \]
    for all $f,g\in X_p$.
\end{lemma}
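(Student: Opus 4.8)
The plan is to check, in order, that the solution $u_f$ depends only on the class $[f]\in X_p$, that $g\mapsto\mathcal{A}_{p,A}(u_f,g)$ descends to a well-defined linear functional on $X_p$, and that this functional obeys the stated bound with $(p-1)$-homogeneous dependence on $f$. For the first point I would invoke Theorem~\ref{thm: Homogeneous fractional p-Laplace equation}, which assigns to every $f\in H^{s,p}(\R^n;\R^m)$ a unique weak solution $u_f$ of the homogeneous exterior value problem with exterior value $f$, together with Lemma~\ref{lemma: uniqueness of solutions on trace space}: if two representatives $f_1,f_2$ of the same class satisfy $f_1-f_2\in\widetilde{H}^{s,p}(\Omega;\R^m)$, then $u_{f_1}\equiv u_{f_2}$. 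Hence $u_f$ is genuinely a function of $[f]\in X_p$, so $\mathcal{A}_{p,A}(u_f,g)$ is unambiguous in its first slot.

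For the second point I would note that $v\mapsto\mathcal{A}_{p,A}(u_f,v)$ is linear — all the nonlinearity is carried by the first argument — and continuous on $H^{s,p}(\R^n;\R^m)$ by exactly the estimate used in the proof of Proposition~\ref{prop: definition of biharmonic op} (Cauchy--Schwarz, H\"older, the ellipticity bound $|A^{1/2}w|\le\Lambda|w|$ from \eqref{eq: ellipticity}, and the mapping property $(-\Delta)^{s/2}\colon H^{s,p}(\R^n;\R^m)\to L^p(\R^n;\R^m)$). Since $u_f$ is a weak solution, $\mathcal{A}_{p,A}(u_f,\phi)=\langle(-\Delta)^s_{p,A}u_f,\phi\rangle=0$ for every $\phi\in\widetilde{H}^{s,p}(\Omega;\R^m)$; therefore $\mathcal{A}_{p,A}(u_f,g)$ is unchanged under $g\mapsto g-\phi$ and thus depends only on $[g]$, yielding a well-defined linear functional $\Lambda_{p,A}(f)$ on $X_p$.

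For the bound I would combine the H\"older/ellipticity estimate with two infima over representatives. On the $g$ side, $|\mathcal{A}_{p,A}(u_f,g)|=|\mathcal{A}_{p,A}(u_f,g-\phi)|\le C\Lambda^p\|(-\Delta)^{s/2}u_f\|_{L^p}^{p-1}\|g-\phi\|_{H^{s,p}}$ for every $\phi\in\widetilde{H}^{s,p}(\Omega;\R^m)$, and taking the infimum over $\phi$ replaces the last factor by $\|g\|_{X_p}$. On the $f$ side, estimate \eqref{eq: estimate solution with exterior condition} gives $\|(-\Delta)^{s/2}u_f\|_{L^p}\le(\Lambda/\lambda)^p\|(-\Delta)^{s/2}f_0\|_{L^p}\le C(\Lambda/\lambda)^p\|f_0\|_{H^{s,p}}$ for any representative $f_0$ of $f$, and taking the infimum over representatives bounds it by a constant multiple of $\|f\|_{X_p}$. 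Substituting the two bounds gives $|\langle\Lambda_{p,A}(f),g\rangle|\le C\|f\|_{X_p}^{p-1}\|g\|_{X_p}$, as claimed.

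I do not anticipate a genuine obstacle here; the only point requiring care is the bookkeeping of these two separate infima over representatives, together with the observation that it is precisely the well-definedness in each slot — Lemma~\ref{lemma: uniqueness of solutions on trace space} for $f$, and the vanishing $\mathcal{A}_{p,A}(u_f,\phi)=0$ for $g$ — that licenses replacing $f$ and $g$ by arbitrary representatives before estimating. All the required inputs (Lemma~\ref{lemma: uniqueness of solutions on trace space}, Theorem~\ref{thm: Homogeneous fractional p-Laplace equation}, and the continuity estimate from Proposition~\ref{prop: definition of biharmonic op}) are already available.
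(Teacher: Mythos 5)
Your proposal is correct and follows essentially the same route as the paper: well-definedness in $f$ via Lemma~\ref{lemma: uniqueness of solutions on trace space}, well-definedness in $g$ from the fact that $u_f$ is a weak solution so $\mathcal{A}_{p,A}(u_f,\phi)=0$ for $\phi\in\widetilde{H}^{s,p}(\Omega;\R^m)$, and then the H\"older/ellipticity estimate combined with estimate~\eqref{eq: estimate solution with exterior condition} and an infimum over representatives in each slot. You merely spell out the two infima more explicitly than the paper, which just states the chain of inequalities ``for all representatives $\widetilde f,\widetilde g$'' and concludes.
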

\begin{proof}
    Let us assume that $A\in\mathbb{S}_+^m$ has ellipticity constants $0<\lambda<\Lambda$. By Lemma~\ref{lemma: uniqueness of solutions on trace space}, we know that $u_f$ is independent of the chosen representative and since $u_f$ is a solution to the homogeneous fractional $p$\,-biharmonic systems, we see that $\mathcal{A}_{p,A}(u_f,g)$ is as well independent of the representative of $g$. We now proceed similarly as in the proof of Proposition~\ref{prop: definition of biharmonic op}. Using H\"older's inequality, $A\in \mathbb{S}_+^m$, Theorem~\ref{thm: Homogeneous fractional p-Laplace equation}, the continuity of the fractional Laplacian, we deduce the estimate
    \[
    \begin{split}
        |\langle\Lambda_{p,A}(f),g\rangle|&\leq \Lambda^{p-1}\|(-\Delta)^{s/2}u_{\widetilde{f}}\|^{p-1}_{L^p(\R^n;\R^m)}\|(-\Delta)^{s/2}\widetilde{g}\|_{L^p(\R^n;\R^m)}\\
        &\leq C\|(-\Delta)^{s/2}\widetilde{f}\|^{p-1}_{L^p(\R^n;\R^m)}\|\widetilde{g}\|_{H^{s,p}(\R^n;\R^m)}\\
        &\leq C\|\widetilde{f}\|_{H^{s,p}(\R^n;\R^m)}^{p-1}\|\widetilde{g}\|_{H^{s,p}(\R^n;\R^m)}
    \end{split}
    \]
    for all $f,g\in X_p$ and all representative $\widetilde{f},\widetilde{g}$ of $f$ and $g$. This in turn implies
    \[
         |\langle\Lambda_{p,A}(f),g\rangle|\leq C\|f\|^{p-1}_{X_p}\|g\|_{X_p}
    \]
    and thus $\Lambda_{p,A}$ is indeed well-defined. 
\end{proof}

\section{Caffarelli--Silvestre extension and unique continuation principles for nonlocal operators in Bessel potential spaces}
\label{sec: UCP}

\subsection{Caffarelli--Silvestre extension in $L^p$ for $p\neq 2$}
\label{sec:caffarelliSilvestre}

We first show a preliminary lemma which deals with elementary properties of the generalized Poisson kernel. The proof of \ref{item 3 Poisson kernel} strongly follows the one of \cite[Proposition~B.1]{GenCKNThm}, where also the estimate in \ref{item 4 Poisson kernel} is stated.

\begin{lemma}[Properties of the Poisson kernel]
\label{lem: properties Poisson kernel}
    Let $0<s<1$ and denote by $P$ the generalized Poisson kernel. Then the following statements hold:
    \begin{enumerate}[(i)]
        \item\label{item 1 Poisson kernel} $P(\cdot,y)\in L^q(\R^n)$ for all $1\leq q<\infty$ and $y>0$ with
        \[
            \|P(\cdot,y)\|^q_{L^q(\R^n)}=\frac{\omega_n}{2}y^{n(1-q)}B\left(\frac{n}{2},\frac{n(q-1)}{2}+sp\right),
        \]
        where $B(x,y)$ denotes the Euler Beta function,
        \item\label{item 2 Poisson kernel} $(C_{n,s}P(\cdot,y))_{y>0}$ is a Dirac sequence,
        \item\label{item 3 Poisson kernel} $P\in C^{\infty}(\R^{n+1}_+)$ solves $\Lap_s P=0$ in $\R^{n+1}_+$,
        \item\label{item 4 Poisson kernel} for all $k\in\N$, there exists $C_k>0$ such that
        \[
            |\partial_y^kP(x,y)|\leq C_k\frac{y^{2s-k}}{(|x|^2+y^2)^{\frac{n+2s}{2}}}
        \]
         for all $(x,y)\in \R^{n+1}_+$ and $\partial_yP$ is radially symmetric in $x\in\R^n$.
    \end{enumerate}
\end{lemma}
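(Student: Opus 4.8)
The four assertions are largely computational, so my plan is to verify them in order, using the explicit formula $P(x,y) = y^{2s}(|x|^2+y^2)^{-(n+2s)/2}$ and the normalization $C_{n,s} = \|P(\cdot,1)\|_{L^1(\R^n)}^{-1}$.

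\textbf{Step (i): $L^q$ integrability and the norm formula.} For fixed $y>0$ I would compute
\[
\|P(\cdot,y)\|_{L^q(\R^n)}^q = \int_{\R^n} \frac{y^{2sq}}{(|x|^2+y^2)^{q(n+2s)/2}}\,dx,
\]
pass to polar coordinates to introduce the surface area $\omega_n$ of $S^{n-1}$, substitute $r = y\sqrt{t}$ (so that $|x|^2 + y^2 = y^2(1+t)$ and $r^{n-1}\,dr = \tfrac12 y^n t^{(n-2)/2}\,dt$), and recognize the resulting integral $\int_0^\infty t^{n/2-1}(1+t)^{-q(n+2s)/2}\,dt$ as a Beta integral $B\big(\tfrac n2,\ \tfrac{n(q-1)}{2}+sq\big)$ via the standard identity $\int_0^\infty t^{a-1}(1+t)^{-a-b}\,dt = B(a,b)$. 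Tracking the powers of $y$ gives the exponent $2sq + n - q(n+2s) = n(1-q)$, matching the claim. (I note the stated formula writes $sp$; in context $p=q$, so this is just the Beta integral's second parameter $b = \tfrac{n(q-1)}2 + sq$.) Finiteness for $1\le q<\infty$ is automatic since both Beta parameters are positive.

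\textbf{Step (ii): Dirac sequence.} From (i) with $q=1$ we get $\int_{\R^n} P(x,y)\,dx = \tfrac{\omega_n}{2}B(\tfrac n2, sq)$ independent of $y$ (the exponent $n(1-q)$ vanishes), so $\int C_{n,s}P(x,y)\,dx = 1$ for all $y>0$ by the choice of $C_{n,s}$; nonnegativity is obvious; and the scaling relation $P(x,y) = y^{-n}P(x/y,1)$ gives the concentration property $\int_{|x|>\delta}P(x,y)\,dx \to 0$ as $y\to 0^+$ by dominated convergence after the substitution $x = yz$. These three facts are exactly the definition of a Dirac (approximate-identity) sequence.

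\textbf{Step (iii) and (iv): smoothness, the equation, and derivative bounds.} Smoothness of $P$ on $\R^{n+1}_+$ is clear since the denominator $(|x|^2+y^2)^{(n+2s)/2}$ is smooth and nonvanishing there. For $\Lap_s P = 0$ I would either cite that this is the defining property of the Caffarelli--Silvestre Poisson kernel (Theorem~\ref{thm: CS extension}, where $U = C_{n,s}P(\cdot,y)\ast u$ solves $\Lap_s U = 0$, applied formally to an approximate identity, or more rigorously by a direct differentiation check that $\Lap P + \tfrac{1-2s}{y}\partial_y P = 0$, which reduces to a polynomial identity in $|x|$ and $y$), or reproduce the short computation from \cite[Proposition~B.1]{GenCKNThm} as the paper indicates. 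For the derivative estimates in (iv), I would argue by induction on $k$: writing $P(x,y) = y^{2s}\rho^{-(n+2s)/2}$ with $\rho = |x|^2+y^2$, each $\partial_y$ either lowers the power of $y$ by one or lowers the power of $\rho$ by one while producing a factor $y$ (since $\partial_y \rho = 2y$); an induction then shows $\partial_y^k P$ is a finite sum of terms of the form $c\, y^{2s-k+2j}\rho^{-(n+2s)/2 - j}$ with $0\le j\le k$, and since $y^{2}\le \rho$ each such term is bounded by $C\,y^{2s-k}\rho^{-(n+2s)/2}$. Radial symmetry of $\partial_y P$ in $x$ is immediate because $P$ depends on $x$ only through $|x|$.

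\textbf{Main obstacle.} None of the steps is genuinely hard; the only place requiring care is getting the Beta-function parameters and the power of $y$ in (i) exactly right (and reconciling the $sp$ versus $sq$ notation), since everything downstream — the Dirac-sequence property and the later $L^p$ mapping properties of the extension — rests on that scaling. I would therefore double-check the substitution $r = y\sqrt t$ and the exponent bookkeeping before moving on.
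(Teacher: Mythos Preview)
Your proposal is correct and follows essentially the same route as the paper: direct computation of the $L^q$ norm via polar coordinates and a Beta-function substitution for (i), the three approximate-identity properties for (ii), a direct verification of $\Lap_s P=0$ for (iii), and an inductive structure for the $y$-derivatives in (iv). The only cosmetic differences are that the paper scales $x\mapsto x/y$ first and then uses the substitution $r^2=t/(1-t)$ (mapping to $\int_0^1$) rather than your $r=y\sqrt t$ (mapping to $\int_0^\infty$), and for (iv) the paper simply cites \cite[Proposition~B.1, eq.~(78)]{GenCKNThm} whereas your induction argument actually spells out the proof; you also correctly flag the $sp$ versus $sq$ typo in the statement.
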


\begin{proof}
    \ref{item 1 Poisson kernel} Fix $y>0$ and $1\leq q<\infty$, then using the change of variables $z=x/y$, we obtain
    \[
    \begin{split}
        \int_{\R^n}|P(x,y)|^q\,dx&=y^{n(1-q)}\int_{\R^n}\frac{1}{(|x|^2+1)^{\frac{n+2s}{2}q}}\,dz=\omega_ny^{n(1-q)}\int_0^{\infty}\frac{r^{n-1}}{(1+r^2)^{\frac{n+2s}{2}q}}\,dr\\
        &=\frac{\omega_n}{2} y^{n(1-q)}\int_0^1(1-t)^{\frac{n}{2}(q-1)+sq-1}t^{\frac{n}{2}-1}\,dt\\
        &=\frac{\omega_n}{2} y^{n(1-q)}B\left(\frac{n}{2},\frac{n}{2}(q-1)+sq\right),
    \end{split}
    \]
where in the second equality we used polar coordinates, then made the change of variables $r^2=\frac{t}{1-t}$ with $dr=\frac{1}{2(1-t)^2}\sqrt{\frac{1-t}{t}}dt$ and finally used the product rule for the Beta function, which is given by $B(x,y)=\int_0^1t^{x-1}(1-t)^{y-1}\,dt$ for all $x,y>0$.
    
\ref{item 2 Poisson kernel} For all $(x,y)\in\R^{n+1}_+$, we have $P(x,y)\geq 0$ and by the definition of $C_{n,s}$ there holds $\|C_{n,s}P(\cdot,y)\|_{L^1(\R^n)}=1$. Moreover, by Lebesgue's dominated convergence theorem, we have $\|P(\cdot,y)\|_{L^1(\R^n\setminus B_{\epsilon})}\to 0$ as $y\to 0$ for any $\epsilon>0$ and therefore the claim follows.

\ref{item 3 Poisson kernel} The smoothness of $P$ directly follows from the assumption that $y>0$. Since for $y>0$, there holds
    \[
        \partial_i|X|^{-\gamma}=-\gamma X_i|X|^{-(\gamma+2)}
    \]
    for all $1\leq i\leq n+1$ and $\gamma>0$. Hence, we have
    \begin{equation}
    \label{eq: laplacian}
    \begin{split}
        \Lap |X|^{-\gamma}&=\gamma(\gamma+2)|X|^{-{\gamma+2}}-(n+1)\gamma|X|^{-(\gamma+2)}\\
        &=\gamma(\gamma-(n-1))|X|^{-(\gamma+2)}.
    \end{split}
    \end{equation}
    Therefore, for $\gamma,\beta>0$, we obtain
    \[
    \begin{split}
        &\Lap_s \left(\frac{y^{\beta}}{|X|^{\gamma}}\right)
        =\Lap \left(\frac{y^{\beta}}{|X|^{\gamma}}\right)+\frac{1-2s}{y}\partial_y\left(\frac{y^{\beta}}{|X|^{\gamma}}\right)\\
        &=\gamma(\gamma-(n-1))\frac{y^{\beta}}{|X|^{\gamma+2}}+\beta(\beta-1)\frac{y^{\beta-2}}{|X|^{\gamma}}-2\beta\gamma\frac{y^{\beta}}{|X|^{\gamma+2}}+(1-2s)\beta\frac{y^{\beta-2}}{|X|^{\gamma}}-(1-2s)\gamma\frac{y^{\beta}}{|X|^{\gamma+2}}\\
        &= \beta(\beta-1+(1-2s)\frac{y^{\beta-2}}{|X|^{\gamma}}+\gamma(\gamma-2\beta-(n-1)-(1-2s))\frac{y^{\beta}}{|X|^{\gamma+2}}.
    \end{split}
    \]
    If we take $\beta=2s$, $\gamma=n+2s$, then both coefficients are zero and we see that $\Lap_s P=0$. This shows $\Lap_s P=0$ in $\R^{n+1}_+$.
    
    \ref{item 4 Poisson kernel} This estimate is stated in \cite[Proposition~B.1, eq. (78)]{GenCKNThm}. It can be proved by a direct, but a bit lengthy, computation using the generalized Leibniz rule and the formula of Faà di Bruno or induction.
\end{proof}

\begin{lemma}
\label{prop: integrability of CS extension}
    Let $0<s<1$, $1<p<\infty$, denote by $P$ the generalized Poisson kernel, assume $u\in L^p(\R^n)$ and let $U(\cdot,y)\vcentcolon = C_{n,s}P(\cdot,y)\ast u$. Then 
    \begin{equation}
    \label{eq: integrability CS extension}
        \|U(\cdot,y)\|_{L^p(\R^n)}\leq \|u\|_{L^p(\R^n)},
    \end{equation}
    where $C_{n,s}=\|P(\cdot,1)\|_{L^1(\R^n)}^{-1}$, and $U\in L^p_{loc}(\overline{\R^{n+1}_+},y^{1-2s})$. Moreover, $U$ solves
    \begin{align}
         \begin{array}{rl} 
            \Lap_sU &\!\!\!= 0 \ \text{ in } \R^{n+1}_+, \\
            U &\!\!\!= u \ \text{ on } \R^n\times \{0\}. 
        \end{array} 
    \end{align}
\end{lemma}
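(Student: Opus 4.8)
The statement asserts three things about $U(\cdot,y) = C_{n,s} P(\cdot,y) \ast u$ for $u \in L^p(\R^n)$: the uniform $L^p$-bound \eqref{eq: integrability CS extension}, the local integrability $U \in L^p_{loc}(\overline{\R^{n+1}_+}, y^{1-2s})$, and the fact that $U$ solves the Caffarelli--Silvestre extension problem. The plan is to treat these in order. The first is an immediate consequence of Young's convolution inequality: since $\|C_{n,s} P(\cdot,y)\|_{L^1(\R^n)} = 1$ for every $y > 0$ by Lemma~\ref{lem: properties Poisson kernel}\ref{item 1 Poisson kernel} (or by the very definition of $C_{n,s}$ together with the scaling $\|P(\cdot,y)\|_{L^1} = \|P(\cdot,1)\|_{L^1}$), convolving with a probability density cannot increase the $L^p$-norm, so $\|U(\cdot,y)\|_{L^p(\R^n)} \leq \|C_{n,s}P(\cdot,y)\|_{L^1(\R^n)} \|u\|_{L^p(\R^n)} = \|u\|_{L^p(\R^n)}$.

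For the local integrability, I would fix $r > 0$ and estimate $\int_0^r \int_{B_r} |U(x,y)|^p y^{1-2s}\,dx\,dy$. Applying the uniform bound just obtained pointwise in $y$, this is at most $\|u\|_{L^p(\R^n)}^p \int_0^r y^{1-2s}\,dy$, which is finite because $1 - 2s > -1$ (as $s < 1$). Hence $U \in L^p(B_r \times (0,r), y^{1-2s})$ for every $r$, i.e. $U \in L^p_{loc}(\overline{\R^{n+1}_+}, y^{1-2s})$. (If one also wanted $\overline\nabla U$ locally integrable against the weight — not claimed in this lemma — one would differentiate under the convolution and use the kernel estimates of Lemma~\ref{lem: properties Poisson kernel}\ref{item 4 Poisson kernel}, but that is not needed here.)

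For the PDE, the interior equation $\Lap_s U = 0$ in $\R^{n+1}_+$ follows by differentiating under the integral sign: for $y > 0$ the kernel $P(x - \cdot, y)$ and all its $X$-derivatives are smooth and decay fast enough in the $x$-variable (again Lemma~\ref{lem: properties Poisson kernel}\ref{item 4 Poisson kernel} and its analogues for $x$-derivatives) to justify differentiating $U = C_{n,s}\int P(x-z,y) u(z)\,dz$ under the integral against $u \in L^p$; then $\Lap_s U(x,y) = C_{n,s}\int (\Lap_s P)(x-z,y) u(z)\,dz = 0$ by Lemma~\ref{lem: properties Poisson kernel}\ref{item 3 Poisson kernel}. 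The boundary condition $U = u$ on $\R^n \times \{0\}$ is interpreted as $U(\cdot,y) \to u$ in $L^p(\R^n)$ as $y \to 0$: this is the standard approximate-identity argument, using that $(C_{n,s}P(\cdot,y))_{y>0}$ is a Dirac sequence (Lemma~\ref{lem: properties Poisson kernel}\ref{item 2 Poisson kernel}) and that translations act continuously on $L^p$ for $p < \infty$.

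**Main obstacle.** None of the steps is genuinely hard; the only point requiring a little care is the precise sense in which the boundary condition and the equation are to be understood in the $L^p$ setting for $p \neq 2$ (trace in $L^p(\R^n)$ rather than in the energy space $H^1(\R^n, y^{1-2s})$, and the interior equation holding classically in $\R^{n+1}_+$ rather than weakly). I would make this interpretation explicit at the start of the proof and then the rest is routine. A secondary bookkeeping point is justifying differentiation under the integral sign uniformly on compact subsets of $\R^{n+1}_+$, which follows from the bound $|\partial_X^\alpha P(x,y)| \lesssim_{\alpha,K} (1+|x|)^{-(n+2s)}$ on such compacta, placing the $x$-derivatives of the kernel in $L^{p'}(\R^n)$ so that the pairing with $u \in L^p$ is well-defined and continuous.
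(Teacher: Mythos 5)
Your proposal is correct and follows essentially the same route as the paper's proof: Young's inequality with $\|C_{n,s}P(\cdot,y)\|_{L^1}=1$ for \eqref{eq: integrability CS extension}, integration of the resulting uniform bound against $y^{1-2s}$ on $(0,r)$ for the weighted local integrability, differentiation under the integral combined with $\Lap_s P=0$ (Lemma~\ref{lem: properties Poisson kernel}\ref{item 3 Poisson kernel}) for the interior equation, and the Dirac-sequence property for the boundary condition. The only cosmetic difference is that the paper spells out the required $L^q$ bounds on $\partial_y^k P$ and $\Delta P$ explicitly rather than citing the kernel estimate generically, but the argument is the same.
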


\begin{remark}
    The estimate $\|P(\cdot,y)\ast u\|_{L^p(\R^n)}\leq C\|u\|_{L^p(\R^n)}$ also follows from \cite[Theorem~2.1]{StingaTorrea}, but since our proof is less involved we presented the argument here.
\end{remark}

\begin{proof}
    The estimate \eqref{eq: integrability CS extension} follows by Young's inequality and the property \ref{item 1 Poisson kernel} of Lemma~\ref{lem: properties Poisson kernel}. This estimate implies
    \[
    \begin{split}
        &\left(\int_0^R\int_{\R^n}|U|^py^{1-2s}\,dxdy\right)^{1/p}=\left(\int_0^Ry^{1-2s}\|U(\cdot,y)\|_{L^p(\R^n)}^p\,dy\right)^{1/p}\\
        &\leq \|u\|_{L^p(\R^n)}\left(\int_0^Ry^{1-2s}\,dy\right)^{1/p}=C_sR^{2(1-s)/p}\|u\|_{L^p(\R^n)}
    \end{split}
    \]
    for any $R>0$ and some $C_s>0$. Therefore there holds $U\in L^p_{loc}(\overline{\R^{n+1}_+},y^{1-2s})$.
    Next, we verify that $U$ solves $\Lap_s U=0$ in $\R^{n+1}$. By the assertions~\ref{item 1 Poisson kernel} and \ref{item 3 Poisson kernel} of Lemma~\ref{lem: properties Poisson kernel}, we have $\partial_y^kP\in L^q(\R^n)$ for all $1\leq q\leq \infty$. Moreover, it follows by a direct calculation that
    \[
    \begin{split}
        |\Delta P(x,y)|&=y^{1-2s}\left|-n(n+2s)|X|^{-(n+2s+2)}+(n+2s)(n+2s+2)|x|^2|X|^{-(n+2s+4)}\right|\\
        &\leq Cy^{1-2s}|X|^{-(n+2s+2)}\leq Cy^{-(1+4s)}P(x,y)
    \end{split}
    \]  
    which again belongs to $L^q(\R^n)$ whenever $1\leq q\leq \infty$ and $y>0$.
    Therefore, the assertion follows by Young's inequality, the dominated convergence theorem and the property~\ref{item 1 Poisson kernel} of Lemma~\ref{lem: properties Poisson kernel}. Finally, the boundary condition is a consequence of \ref{item 2 Poisson kernel} in Lemma~\ref{lem: properties Poisson kernel}.
\end{proof}

Next we recall the following basic properties of the CS extension from \cite[Section~5.1]{CDV-Bernstein-technique}:

\begin{lemma}
\label{thm: CS extension p>2}
    Let $0<s<1$ and $u\in C^{\infty}(\R^n)\cap W^{2,\infty}(\R^n)$. Then the CS extension $U(\cdot,y)\vcentcolon = C_{n,s}P(\cdot,y)\ast u\in C^{\infty}(\R^{n+1}_+)\cap C(\overline{\R^{n+1}_+})$ is the unique bounded solution of
    \begin{align}
    \label{eq: harm extension}
         \begin{array}{rl} 
            \Lap_sU &\!\!\!= 0 \ \text{ in } \R^{n+1}_+, \\
            U &\!\!\!= u \ \text{ on } \R^n\times \{0\}
        \end{array} 
    \end{align}
    and there holds
    \begin{equation}
    \label{eq: recov frac Lapl}
       -c_{n,s} \lim_{y\to 0}y^{1-2s}\partial_yU=(-\Delta)^su.
    \end{equation}
\end{lemma}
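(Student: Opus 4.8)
The plan is to derive all four assertions directly from the Poisson representation $U(\cdot,y)=C_{n,s}P(\cdot,y)\ast u$, using the properties of $P$ collected in Lemma~\ref{lem: properties Poisson kernel}, and then to obtain uniqueness among bounded solutions from a maximum principle for the degenerate operator $\Lap_s$. First I would record the easy facts. Boundedness is immediate: since $\|C_{n,s}P(\cdot,y)\|_{L^1(\R^n)}=1$ for every $y>0$ (Lemma~\ref{lem: properties Poisson kernel}\ref{item 1 Poisson kernel}), Young's inequality gives $\|U(\cdot,y)\|_{L^\infty(\R^n)}\le\|u\|_{L^\infty(\R^n)}$. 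For smoothness, the bounds $|\partial_y^kP(x,y)|\lesssim_k y^{2s-k}(|x|^2+y^2)^{-(n+2s)/2}$ from Lemma~\ref{lem: properties Poisson kernel}\ref{item 4 Poisson kernel}, together with the analogous (easier) estimates for the $x$-derivatives, show that every derivative of $P(\cdot,y)$ lies in $L^1(\R^n)$, uniformly for $y$ in compact subsets of $(0,\infty)$; hence one may differentiate under the integral sign to get $U\in C^\infty(\R^{n+1}_+)$, and then $\Lap_sU(x,y)=C_{n,s}\int_{\R^n}(\Lap_sP)(x-z,y)u(z)\,dz=0$ by Lemma~\ref{lem: properties Poisson kernel}\ref{item 3 Poisson kernel} and translation invariance in $x$. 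Continuity up to $\{y=0\}$ and the boundary condition $U(\cdot,0)=u$ follow from the Dirac sequence property (Lemma~\ref{lem: properties Poisson kernel}\ref{item 2 Poisson kernel}) and the uniform continuity of $u$, which gives $U\in C(\overline{\R^{n+1}_+})$.

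The key step is the recovery formula \eqref{eq: recov frac Lapl}. Since $\int_{\R^n}\partial_yP(z,y)\,dz=\partial_y(C_{n,s}^{-1})=0$ and $\partial_yP(\cdot,y)$ is radial in $z$, I would symmetrize and write
\[
y^{1-2s}\partial_yU(x,y)=\frac{C_{n,s}}{2}\int_{\R^n}y^{1-2s}\partial_yP(z,y)\,\big(u(x-z)+u(x+z)-2u(x)\big)\,dz .
\]
A direct computation gives $y^{1-2s}\partial_yP(z,y)=2s(|z|^2+y^2)^{-(n+2s)/2}-(n+2s)y^2(|z|^2+y^2)^{-(n+2s)/2-1}$, which converges pointwise to $2s|z|^{-(n+2s)}$ as $y\to 0$ and is bounded by $C(|z|^2+y^2)^{-(n+2s)/2}\le C|z|^{-(n+2s)}$. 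Because $u\in W^{2,\infty}(\R^n)$ yields $|u(x-z)+u(x+z)-2u(x)|\le\min(\|D^2u\|_{L^\infty}|z|^2,\,4\|u\|_{L^\infty})$, the integrand is dominated by an integrable function (here $s<1$ controls the singularity at $z=0$), so dominated convergence gives
\[
\lim_{y\to 0}y^{1-2s}\partial_yU(x,y)=sC_{n,s}\int_{\R^n}\frac{u(x-z)+u(x+z)-2u(x)}{|z|^{n+2s}}\,dz ,
\]
which is a negative constant multiple of the absolutely convergent singular-integral expression for $(-\Delta)^su(x)$; matching the normalization against the definition of $c_{n,s}$ (as in Theorem~\ref{thm: CS extension}) gives \eqref{eq: recov frac Lapl}.

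For uniqueness, suppose $U_1,U_2$ are two bounded solutions with the same trace; then $W=U_1-U_2$ is a bounded $\Lap_s$-harmonic function with zero boundary value. Rewriting the equation in divergence form as $\DivH(y^{1-2s}\Nabla W)=0$, whose weight is $A_2$, a weak maximum principle holds on bounded cylinders, and a Phragm\'{e}n--Lindel\"{o}f/barrier argument (with barriers adapted to the weight $y^{1-2s}$) propagates $\sup|W|$ from bounded cylinders to all of $\R^{n+1}_+$ and forces $W\equiv 0$.

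The main obstacle is the recovery formula: one must set up the symmetrized second difference so that the $W^{2,\infty}$-bound on $u$ exactly compensates the $|z|^{-(n+2s)}$ singularity of the limiting kernel near the origin (this is where $0<s<1$ enters), justify the interchange of limit and integral, and then track the normalizing constants $C_{n,s}$ and $c_{n,s}$ carefully so that the resulting singular integral is literally $(-\Delta)^su$. The uniqueness statement is also mildly delicate, since $\Lap_s$ degenerates exactly at $\{y=0\}$, so one cannot quote a textbook maximum principle verbatim and must instead build barriers tailored to the Muckenhoupt weight.
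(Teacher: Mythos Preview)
The paper does not prove this lemma at all: it is stated immediately after the sentence ``Next we recall the following basic properties of the CS extension from \cite[Section~5.1]{CDV-Bernstein-technique}'' and no argument is given. So there is nothing to compare your proof against.

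That said, your proposal is a correct outline of the standard direct proof (essentially the original Caffarelli--Silvestre argument). The smoothness, boundedness, $\Lap_s$-harmonicity, and boundary trace all follow exactly as you indicate from Lemma~\ref{lem: properties Poisson kernel} and differentiation under the integral. Your derivation of \eqref{eq: recov frac Lapl} via the symmetrized second difference and dominated convergence is the right mechanism, and the $W^{2,\infty}$ hypothesis is used precisely where you say, to make $|z|^{-(n+2s)}\min(|z|^2,1)$ integrable when $0<s<1$. The only place you might tighten the argument is uniqueness: rather than building barriers by hand, one can reflect evenly across $\{y=0\}$ (the zero Neumann-type condition is automatic once the boundary datum vanishes) to get a bounded solution of $\DivH(|y|^{1-2s}\Nabla W)=0$ on all of $\R^{n+1}$ and then invoke the Liouville theorem for $A_2$-weighted divergence-form equations; this is cleaner than a Phragm\'en--Lindel\"of argument on half-cylinders.
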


\subsection{Unique continuation principles for the fractional Laplacian}
\label{sec:UCPfractionalLaplacia}
In the study of nonlocal inverse problems, one important property of the fractional Laplacian is the unique continuation principle (UCP) (cf.~\cite[Theorem 1.2]{CMR20}): \\

\emph{Let $r\in \R$, $s\in \R_+\setminus\N$. If $u \in H^{r}(\R^n)$ satisfies $(-\Delta)^su=u=0$ in a nonempty open set $V$, then $u\equiv 0$ in $\R^n$.}\\

Theorem~\ref{UCP} shows that the UCP holds also in the Bessel potential spaces $H^{r,p}(\R^n)$ for $p\neq 2$ and $r\in\R$. For $1\leq p<2$ this is well-known (see \cite[Corollary 3.5]{CMR20}) but for $2<p<\infty$ this result is to the best of our knowledge new. We first prove the following reduction lemma:

\begin{lemma}
\label{lemma: reduction}
    Let $2<p<\infty$, $0<s<1$ and define $W^{\infty,p}(\R^n)=\bigcap_{k\in\N}W^{k,p}(\R^n)$
. Suppose that $(-\Delta)^su=u=0$ in a nonempty open set $V$ and $u\in C_b^{\infty}(\R^n)\cap W^{\infty,p}(\R^n)$ implies that $u\equiv 0$ in $\R^n$. Then Theorem~\ref{UCP} holds true.
\end{lemma}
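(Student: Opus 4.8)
The plan is to deduce Theorem~\ref{UCP} (for the fixed exponents $2<p<\infty$, $0<s<1$ of the lemma and an \emph{arbitrary} $r\in\R$) from the assumed special case by a mollification argument. The one point that genuinely requires work is that a mollified $H^{r,p}$ function lands in $C_b^{\infty}(\R^n)\cap W^{\infty,p}(\R^n)$ \emph{even when $r<0$}, while at the same time both local vanishing conditions are preserved.

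So suppose $u\in H^{r,p}(\R^n)$ satisfies $(-\Delta)^su=u=0$ in a nonempty open set $V$, and fix a ball $B_\rho(x_0)\subset V$. Let $\eta\in C_c^{\infty}(B_1(0))$ with $\int_{\R^n}\eta\,dx=1$, put $\eta_\eps(x)=\eps^{-n}\eta(x/\eps)$ and $u_\eps\vcentcolon= u*\eta_\eps$. Writing $g\vcentcolon=\langle D\rangle^r u\in L^p(\R^n)$, the key observation is that for every $k\in\N$
\[
    \langle D\rangle^{r+k}u_\eps=\langle D\rangle^{r+k}(u*\eta_\eps)=g*\big(\langle D\rangle^{k}\eta_\eps\big),
\]
because Fourier multipliers commute with convolution and $\langle D\rangle^{k}\eta_\eps\in\schwartz(\R^n)\subset L^1(\R^n)$ (as $\langle\xi\rangle^{k}\widehat{\eta_\eps}\in\schwartz(\R^n)$). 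Hence Young's convolution inequality gives $u_\eps\in H^{t,p}(\R^n)$ for every $t\in\R$, so in particular $u_\eps\in W^{m,p}(\R^n)$ for all $m\in\N$ (using $H^{m,p}=W^{m,p}$ for integers $m\ge0$), and therefore $u_\eps\in W^{\infty,p}(\R^n)\subset C_b^{\infty}(\R^n)$ by Sobolev embedding. This is the step one might naively fear fails for $r<0$, and it is the heart of the reduction.

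It remains to check that the hypotheses of the assumed special case hold for $u_\eps$ and to pass to the limit. Since $u=0$ in $B_\rho(x_0)$ and $\supp\eta_\eps\subset B_\eps(0)$, we get $u_\eps=0$ in $B_{\rho-\eps}(x_0)$ for $0<\eps<\rho$; likewise $(-\Delta)^su\in H^{r-2s,p}(\R^n)\subset\tempered(\R^n)$ is a well-defined distribution vanishing in $B_\rho(x_0)$, and $(-\Delta)^su_\eps=\big((-\Delta)^su\big)*\eta_\eps=0$ in $B_{\rho-\eps}(x_0)$, a nonempty open set. Applying the assumed implication to $u_\eps\in C_b^{\infty}(\R^n)\cap W^{\infty,p}(\R^n)$ gives $u_\eps\equiv0$ in $\R^n$ for every $0<\eps<\rho$. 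Finally $\langle D\rangle^r u_\eps=g*\eta_\eps\to g$ in $L^p(\R^n)$ as $\eps\to0$ (here $p<\infty$ is used), i.e.\ $u_\eps\to u$ in $H^{r,p}(\R^n)$, whence $u=0$, that is $u\equiv0$ in $\R^n$.

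The argument is essentially independent of the precise values of $p\in(2,\infty)$ and $s\in(0,1)$; the only nontrivial ingredient is the mapping property $H^{r,p}(\R^n)\ni u\mapsto u*\eta_\eps\in\bigcap_{t\in\R}H^{t,p}(\R^n)$, i.e.\ that smoothing by a fixed Schwartz mollifier produces all the $L^p$-integrable derivatives one needs regardless of the (possibly very negative) starting regularity $r$. That is the only place where any care is required; everything else is bookkeeping about how mollification interacts with the support of $\eta_\eps$ and with Fourier multipliers.
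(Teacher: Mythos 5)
Your mollification argument is correct and in fact a little more explicit than the paper's own version of that step: you spell out the Fourier-side factorization $\langle D\rangle^{r+k}(u*\eta_\eps)=(\langle D\rangle^{r}u)*(\langle D\rangle^{k}\eta_\eps)$ and invoke Young's inequality, whereas the paper just asserts that the Bessel potential commutes with convolution and concludes $u_\eps\in H^{t,p}(\R^n)$ for all $t$. The support bookkeeping and the limit $u_\eps\to u$ in $H^{r,p}(\R^n)$ are also handled correctly and match the paper.

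However, there is a genuine gap: Theorem~\ref{UCP} asserts the UCP for \emph{all} $s\in\R_+\setminus\N$, whereas your argument only establishes it for $0<s<1$. You even flag this yourself (``for the fixed exponents\dots $0<s<1$ of the lemma''), but the conclusion of the lemma is ``Theorem~\ref{UCP} holds true,'' not ``Theorem~\ref{UCP} holds true for $0<s<1$.'' The reduction from $s>1$ to $0<s<1$ is precisely the second half of the paper's proof: after the mollification step one may assume $u\in C_b^{\infty}(\R^n)\cap W^{\infty,p}(\R^n)$; write $s=k+t$ with $k\in\N$, $t\in(0,1)$; observe that $w:=(-\Delta)^k u$ again lies in $C_b^{\infty}\cap W^{\infty,p}$ and, since $(-\Delta)^k$ is a local operator and $(-\Delta)^s u=(-\Delta)^t w$, $w$ satisfies $(-\Delta)^t w=w=0$ on $V$; apply the hypothesis to conclude $w\equiv 0$; then invoke \cite[Lemma~3.1]{CMR20} (an injectivity statement for $(-\Delta)^k$ on the relevant function class) to get $u\equiv 0$. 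Without this iteration step (and the appeal to the known $1\le p\le 2$ case to cover all of $1\le p<\infty$), your proof does not actually establish the stated conclusion.
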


\begin{proof}
    As already noted, we can assume without loss of generality that $2<p<\infty$ and $s\in\R_+\setminus \N$. We first show that it suffices to prove Theorem~\ref{UCP} for functions $u$ in the class $C^{\infty}_b(\R^n)\cap W^{\infty,p}(\R^n)$. For this purpose let $(\rho_{\epsilon})_{\epsilon>0}$ be a sequence of standard mollifiers and fix $u\in H^{r,p}(\R^n)$ with $r\in\R$ satisfying $(-\Delta)^su=u=0$ in some nonempty open subset $V\subset \R^n$. Since the Bessel potential operator commutes with convolution, we have $u_{\epsilon}\vcentcolon = u\ast \rho_{\epsilon}\in H^{t,p}(\R^n)$ for all $t\in\R$ and thus the Sobolev embedding implies $u_\epsilon \in C^{\infty}_b(\R^n)$. This shows $u_\epsilon \in C^{\infty}_b(\R^n)\cap W^{\infty,p}(\R^n)$. Next fix some precompact open subset $\Omega$ with $\overline{\Omega}\subset V$ and choose $\epsilon_0>0$ such that $\overline{B_{\epsilon_0}(x)}\subset V$ for all $x\in \Omega$. Then we clearly have $\phi\ast \rho_{\epsilon}\in C_c^{\infty}(V)$ for all $\phi\in C_c^{\infty}(\Omega)$, $0<\epsilon<\epsilon_0$, and therefore $(-\Delta)^su_{\epsilon}=u_{\epsilon}=0$ in $\Omega$ as the fractional Laplacian commutes with mollification. Now if this implies $u_{\epsilon}=0$ then the convergence $u_{\epsilon}\to u$ in $H^{r,p}(\R^n)$ shows $u=0$. This shows that it is enough to prove Theorem~\ref{UCP} for functions $u\in C^{\infty}_b(\R^n)\cap W^{\infty,p}(\R^n)$. 
    
    Next, following \cite{CMR20}, we show that Theorem~\ref{UCP} holds. If $0<s<1$, then Theorem~\ref{UCP} holds by assumption of Lemma~\ref{lemma: reduction} and the first part of the proof. Thus we can assume $s>1$. Suppose that $u\in C^{\infty}_b(\R^n)\cap W^{\infty,p}(\R^n)$ satisfies $(-\Delta)^su=u=0$ in some nonempty open set $V\subset \R^n$. We set $t\vcentcolon =s-k\in (0,1)$, where $k\in\N$ is the unique integer such that $k<s<s=k+1$. As in the proof of \cite[Theorem 1.2]{CMR20}, we can see that $(-\Delta)^{k}u\in C^{\infty}_b(\R^n)\cap W^{\infty,p}(\R^n)$ satisfies $(-\Delta)^{t}u=u=0$ in $V$, since $(-\Delta)^k$ is a local operator. Now by assumption there holds $(-\Delta)^ku\equiv 0$ in $\R^n$. Using \cite[Lemma 3.1]{CMR20}, we deduce $u\equiv 0$ in $\R^n$. Therefore, we can conclude the proof.
\end{proof}

\begin{proof}[Proof of Theorem~\ref{UCP}]
    As noted earlier it is sufficient to consider the case $2<p<\infty$. By Lemma~\ref{lemma: reduction}, we can assume without loss of generality that $u\in C^{\infty}_b(\R^n)\cap H^{t,p}(\R^n)$ for any $t\in\R$ and assume that $u=(-\Delta)^su=0$ in some nonempty open set $V\subset\R^n$. Let us denote by $U\in L^{p}_{loc}(\overline{\R^{n+1}_+},y^{1-2s})$ the CS extension of $u$, where the regularity follows from Lemma~\ref{prop: integrability of CS extension}. Then by Lemma~\ref{thm: CS extension p>2}, we know that $U$ solves \eqref{eq: harm extension} and there holds 
    \begin{equation}
        -c_{n,s}\lim_{y\to 0}y^{1-2s}\partial_yU=(-\Delta)^su.
    \end{equation} Next we show that $\Nabla U\in L^2_{loc}(\overline{\R^{n+1}_+},y^{1-2s})$. Since $U\in C^{\infty}_b(\R^n)$ and there holds $(-\Delta)^su\in L^{\infty}(\R^n)$ (cf.~\cite[Lemma~3.2]{DINEPV-hitchhiker-sobolev}), we have $y^{1-2s}\partial_y U\in C(\overline{\R^{n+1}_+})$. Hence, by \cite[Proposition~3.6]{CS-nonlinera-equations-fractional-laplacians}, we deduce $\|y^{1-2s}\partial_yU\|_{L^{\infty}(\R^{n+1}_+)}\leq C$. Therefore, there holds $\partial_y U\in L^{2}_{loc}(\overline{\R^{n+1}_+},y^{1-2s})$. In fact for any $R>0$, $\Omega\Subset\R^n$ we have
    \[
    \int_0^R\int_{\Omega}y^{1-2s}|\partial_yU|^2\,dxdy\leq \|y^{1-2s}\partial_yU\|^2_{L^{\infty}(\R^{n+1}_+)}|\Omega|\int_0^Ry^{-(1-2s)}\,dy<\infty,
    \]
    since $1-2s\in (-1,1)$. By Young's inequality and $\|C_{n,s}P(\cdot,y)\|_{L^1(\R^n)}=1$, we obtain
    \[
        \begin{split}
            \|\nabla U\|^p_{L^p(\R^n\times (0,R),y^{1-2s})}&=\int_0^Ry^{1-2s}\|\nabla U(\cdot,y)\|_{L^p(\R^n)}^p\,dy\\
            &\leq C\int_0^Ry^{1-2s}\|P(\cdot,y)\ast \nabla u\|_{L^p(\R^n)}^p\,dy\\
            &\leq C_{n,s,R}\|\nabla u\|_{L^p(\R^n)}^p<\infty
        \end{split}
    \]
    for any $R>0$. Since $p>2$ and the calculation above, H\"older's inequality implies $\Nabla U\in L^2_{loc}(\overline{\R^{n+1}_+},y^{1-2s})$ and therefore $U\in H^1_{loc}(\R^{n+1}_+,y^{1-2s})\cap L^{\infty}(\R^{n+1}_+)$. 
    
    Next fix $r>0$, $x_0\in\R^n$ such that $B_{2r}(x_0)\subset V$ and let $\eta\in C_c^{\infty}(\overline{\R^{n+1}})$ be a cutoff function supported in $B^{n+1}_{2r,+}(X_0)$ with $\eta|_{B_{r,+}^{n+1}(X_0)}=1$, where $X_0=(x_0,0)$. Now $V\vcentcolon = \eta U\in H^{1}_{loc}(\R^{n+1},y^{1-2s})\cap L^{\infty}(\R^n)$ solves
    \begin{align}
    \label{eq: harm extension cut off}
         \begin{array}{rl} 
            \Lap_sV &\!\!\!= g \ \text{ in } \R^{n+1}_+, \\
            V &\!\!\!= f \ \text{ on } \R^n\times \{0\},
        \end{array} 
    \end{align}
    where $f\vcentcolon =\eta(\cdot,0)u$ and
    $g\vcentcolon = U\Lap \eta+2\Nabla U\cdot\Nabla \eta+\frac{1-2s}{y}U\partial_y \eta$.
    Note that $g$ vanishes in $B_{r,+}^{n+1}(X_0)$. Moreover, by the product rule we deduce $f=(-\Delta)^sf=0$ in $B_{r,+}^{n+1}(X_0)$. Therefore, $V\in H^{1}_{loc}(\R^{n+1},y^{1-2s})\cap L^{\infty}(\R^n)$ solves
    \begin{align}
    \label{eq: harm extension cut off}
         \begin{array}{rl} 
            \Lap_sV &\!\!\!= 0  \text{ in } B^{n+1}_{r}(X_0), \\
            V &\!\!\!= 0 \ \text{ in }  B_{r}(x_0),\\
            \lim\limits_{y\to 0}y^{1-2s}\partial_y V&\!\!\!= 0  \text{ in } B_{r}(x_0).
        \end{array} 
    \end{align}
    Therefore, we can apply \cite[Proposition~2.2]{Ru15} to deduce $V=0$ in $B_{r,+}^{n+1}(X_0)$ and thus $U= 0$ in  $B_{r,+}^{n+1}(X_0)$. Since $U$ solves an elliptic PDE with real analytic coefficients, we deduce from the analytic regularity theory that $U$ is real analytic in $\R^{n+1}_+$ (see~e.g.~\cite[Chapter~8 and 9]{HO:analysis-of-pdos}). Now as $U$ vanishes on an open set we deduce that $U=0$ in $\R^{n+1}_+$ but this implies $u=0$ in $\R^n$.
\end{proof}

\subsection{Unique continuation principles for the anisotropic fractional $p$\,-biharmonic operator}

Similarly as for the uniqueness results for the inverse problems related to the fractional Schr\"odinger equations, an important role is played by the unique continuation properties of the anisotropic fractional $p$\,-biharmonic operator $(-\Delta)^s_{p,A}$. In this section, we show that the UCP for the fractional Laplacians naturally lead to certain variants of unique continuation principles for the operators $(-\Delta)^s_{p,A}$. Under additional monotonicity properties on the coefficient fields $A$, this nonlocal phenomenon allows us to deduce 
uniqueness statements for the related inverse problems of anisotropic fractional $p$\,-biharmonic systems with monotonic classes of coefficients.

\begin{proof}[Proof of Theorem \ref{thm: UCP}]
     We assume throughout the proof that the matrix valued function $A$ satisfies the ellipticity condition with parameters $0<\lambda<\Lambda$. First, consider the case $p\geq 2$ and note that this implies $1<p'\leq 2$ and therefore applying \cite[Corollary 3.5]{CMR20} componentwise, we deduce that there holds $v_1=v_2$ in $\R^n$. By integrating the first identity in Lemma~\ref{lemma: Auxiliary lemma} with $x=(-\Delta)^{s/2}u_1, y=(-\Delta)^{s/2}u_2$ over all of $\R^n$, we obtain the following strong monotonicity property
    \[
    \begin{split}
        0=&\int_{\R^n}(v_1-v_2)((-\Delta)^{s/2}u_1-(-\Delta)^{s/2}u_2)\,dx\\
        =&\int_{\R^n}(|A^{1/2}(-\Delta)^{s/2}u_1|^{p-2}A^{1/2}(-\Delta)^{s/2}u_1-|A^{1/2}(-\Delta)^{s/2}u_2|^{p-2}A^{1/2}(-\Delta)^{s/2}u_2)\\
        &\cdot(A^{1/2}(-\Delta)^{s/2}u_1-A^{1/2}(-\Delta)^{s/2}u_2)\,dx\\
        \geq& c_p\int_{\R^n}|A^{1/2}(-\Delta)^{s/2}u_1-A^{1/2}(-\Delta)^{s/2}u_2|^p\,dx\\
        \geq& c_p\lambda^{p}\int_{\R^n}|(-\Delta)^{s/2}u_1-(-\Delta)^{s/2}u_2|^p\,dx.
    \end{split}
    \]
    If $sp<n$, then the Hardy--Littlewood--Sobolev lemma shows $u_1=u_2$ a.e. in $\R^n$. On the other hand, if $sp\geq n$, then the above calculation ensures $(-\Delta)^{s/2}u_1=(-\Delta)^{s/2}u_2$ in $\R^n$. Now we set $u_i\vcentcolon =u_1^{i}-u_2^{i}\in H^{s,p}(\R^n)$, $i=1,\ldots,m$, and claim that the support of $\ifourier u_i$ is contained in $\{0\}$. In fact, if $\psi\in C_c^{\infty}(\R^n\setminus\{0\})$, then $\phi= |\xi|^{-s}\psi\in C_c^{\infty}(\R^n\setminus \{0\})$, and therefore there holds
    \[
    \begin{split}
        \langle \ifourier u_i,\psi\rangle &= \langle \ifourier u_i, |\xi|^s\phi\rangle =\langle u_i,(-\Delta)^{s/2}\check{\phi}\rangle=\langle (-\Delta)^{s/2}u_i,\check{\phi}\rangle=0,
    \end{split}
    \]
    since $\check{\phi}\in\mathscr{S}_0(\R^n)$. By \cite[Exercise 2.67]{MI:distribution-theory}, we deduce that $\ifourier u_i$ has a unique representation of the form
    \[
        \ifourier u_i=\sum_{|\alpha|\leq N_i}a_{\alpha}^{i}D^{\alpha}_{\xi}\delta_0
    \]
    for some $a_{\alpha}^{i}\in \C$ and $N_i\in\N_0$. Therefore 
    \[
        u_i=\sum_{|\alpha|\leq N_i}a^{i}_{\alpha}x^{\alpha},
    \]
    but since $u_i\in L^p(\R^n)$, we deduce $u_i\equiv 0$ in $\R^n$ for all $1\leq i\leq m$ and therefore $u_1\equiv u_2$ in $\R^n$.
    
    Next consider the case $1<p<2$. Then using Theorem~\ref{UCP}, we deduce $v_1=v_2$ in $\R^n$. Similarly as in the previous case, we derive, by integrating the second identity in Lemma~\ref{lemma: Auxiliary lemma} with $x=(-\Delta)^{s/2}u_1, y=(-\Delta)^{s/2}u_2$ over all of $\R^n$, the estimate
    \[
    \begin{split}
        0=&\int_{\R^n}(v_1-v_2)((-\Delta)^{s/2}u_1-(-\Delta)^{s/2}u_2)\,dx\\
        =&\int_{\R^n}\left(|A^{1/2}(-\Delta)^{s/2}u_1|^{p-2}A^{1/2}(-\Delta)^{s/2}u_1-|A^{1/2}(-\Delta)^{s/2}u_2|^{p-2}A^{1/2}(-\Delta)^{s/2}u_2\right)\\
        &\cdot(A^{1/2}(-\Delta)^{s/2}u_1-A^{1/2}(-\Delta)^{s/2}u_2)\\
        \geq &c_p\int_{\R^n}\frac{|A^{1/2}(-\Delta)^{s/2}u_1-A^{1/2}(-\Delta)^{s/2}u_2|^2}{(|A^{1/2}(-\Delta)^{s/2}u_1|+|A^{1/2}(-\Delta)^{s/2}u_1|)^{2-p}}\,dx\\
        \geq &c_p\frac{\lambda^2}{\Lambda^{p-2}}\int_{\R^n}\frac{|(-\Delta)^{s/2}u_1-(-\Delta)^{s/2}u_2|^2}{(|(-\Delta)^{s/2}u_1|+|(-\Delta)^{s/2}u_1|)^{2-p}}\,dx.
    \end{split}
    \]
    This can only hold if $(-\Delta)^{s/2}u_1=(-\Delta)^{s/2}u_2$ a.e. in $\R^n$. By the same argument as in the case $2\leq p<\infty$, we have $u_1=u_2$ a.e. in $\R^n$. Therefore, we can conclude the proof.
\end{proof}

\begin{corollary}[Special cases]
\label{cor: special cases UCP}
    Let $1< p<\infty$, $s>0$ with $s\notin 2\N$ and $\Omega\subset \R^n$ be an open set. Moreover, assume that Theorem~\ref{UCP} holds.
    \begin{enumerate}[(i)]
        \item If $u\in H^{s,p}(\R^n)$ satisfies
            \[
              (-\Delta)^s_p u=(-\Delta)^{s/2}u=0\quad \text{in}\quad\Omega,
            \]
            then $u\equiv 0$ in $\R^n$.
        \item If $u_1,u_2\in H^{s,p}(\R^n)$ satisfy
            \[
              (-\Delta)^s_p (u_1-u_2)=(-\Delta)^{s/2}(u_1-u_2)=0\quad \text{in}\quad\Omega,
            \]
            then $u_1\equiv u_2$ in $\R^n$.
    \end{enumerate}
\end{corollary}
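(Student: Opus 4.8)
The plan is to derive both statements from the UCP for the fractional Laplacian (Theorem~\ref{UCP}), applied not to $u$ itself but to the nonlinear field $v\vcentcolon=|(-\Delta)^{s/2}u|^{p-2}(-\Delta)^{s/2}u$, in the same spirit as the proof of Theorem~\ref{thm: UCP}. Equivalently, part~(i) is just the scalar isotropic case $m=1$, $A=1$, $u_2=0$ of Theorem~\ref{thm: UCP}, with $v_1=0$ forced by $(-\Delta)^{s/2}u=0$ in $\Omega$, so one may also simply quote that theorem directly; I will nevertheless spell out the short self-contained argument, since the statement only presupposes Theorem~\ref{UCP}.

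First I would dispose of (ii) by reducing it to (i): given $u_1,u_2\in H^{s,p}(\R^n)$ with $(-\Delta)^s_p(u_1-u_2)=(-\Delta)^{s/2}(u_1-u_2)=0$ in $\Omega$, the function $w\vcentcolon=u_1-u_2\in H^{s,p}(\R^n)$ satisfies exactly the hypotheses of (i), hence $w\equiv0$ and $u_1\equiv u_2$. Note that no cancellation in the nonlinear operator is needed here, because (ii) is formulated directly in terms of $w=u_1-u_2$ rather than as a difference of the two nonlinear expressions.

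For (i), the key point is that $(-\Delta)^{s/2}u=0$ in $\Omega$ forces $v\vcentcolon=|(-\Delta)^{s/2}u|^{p-2}(-\Delta)^{s/2}u=0$ in $\Omega$, while by definition $(-\Delta)^s_pu=(-\Delta)^{s/2}v$, so $(-\Delta)^{s/2}v=0$ in $\Omega$ as well. Since $u\in H^{s,p}(\R^n)$, H\"older's inequality shows $v\in L^{p'}(\R^n)=H^{0,p'}(\R^n)$ with $1\le p'<\infty$, and the hypothesis $s\notin 2\N$ gives $s/2\in\R_+\setminus\N$; thus Theorem~\ref{UCP}, applied with exponent $p'$, order $s/2$ and regularity index $r=0$, yields $v\equiv0$ in $\R^n$. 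Because $t\mapsto|t|^{p-2}t$ vanishes only at $t=0$, this gives $(-\Delta)^{s/2}u=0$ a.e.\ in $\R^n$. I would then close the argument exactly as in the proof of Theorem~\ref{thm: UCP}: testing $\ifourier u$ against functions of the form $|\xi|^{-s}\psi$ with $\psi\in C_c^\infty(\R^n\setminus\{0\})$ shows $\supp(\ifourier u)\subset\{0\}$, so $\ifourier u$ is a finite linear combination of derivatives of $\delta_0$ and $u$ is a polynomial; as $u\in L^p(\R^n)$, this forces $u\equiv0$.

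I do not expect a genuine obstacle here: everything rests on Theorem~\ref{UCP} (or, if one prefers, Theorem~\ref{thm: UCP}), which are assumed available, together with the elementary facts about $|\cdot|^{p-2}(\cdot)$ and polynomials in $L^p$. The only points to be careful with are that the parameter shift $s\mapsto s/2$ must stay inside the admissible set $\R_+\setminus\N$ — which is precisely what $s\notin 2\N$ guarantees — and that the auxiliary field $v$ genuinely lies in a Bessel potential space to which Theorem~\ref{UCP} applies, which is why its $L^{p'}$-membership is recorded explicitly.
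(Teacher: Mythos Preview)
Your proposal is correct and essentially matches the paper's approach: the paper's proof consists of the single sentence ``The assertions directly follow from Theorem~\ref{thm: UCP}'', and you yourself note at the outset that part~(i) is exactly the scalar isotropic case $m=1$, $A=1$, $u_2=0$ of that theorem, while your reduction of (ii) to (i) via $w=u_1-u_2$ is the only natural route (since the hypothesis in (ii) concerns $u_1-u_2$, not $u_1$ and $u_2$ separately). The self-contained argument you spell out simply reproduces the relevant scalar case of the proof of Theorem~\ref{thm: UCP}, so there is no genuine difference in strategy.
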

\begin{proof} The assertions directly follow from Theorem~\ref{thm: UCP}.
\end{proof}

To prove a measurable UCP for anisotropic fractional $p$\,-biharmonic operators, we will need the following estimate:

\begin{lemma}({\cite[Lemma 5.3]{Estimate_p_Laplacian}})
\label{lemma: auxiliary lemma II}
    Let $2\leq p<\infty$, then there exists $C>0$ such that
    \[
        ||x|^{p-2}x-|y|^{p-2}y|\leq C|x-y|(|x|+|y|)^{p-2}
    \]
    for all $x,y\in\R$.
\end{lemma}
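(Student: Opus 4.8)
The plan is to derive the inequality by writing $|x|^{p-2}x-|y|^{p-2}y$ as the integral of the derivative of the scalar function $\phi(t)\vcentcolon=|t|^{p-2}t$ along the segment joining $y$ to $x$. Since $p\geq 2$, the map $\phi\colon\R\to\R$ is continuously differentiable with $\phi'(t)=(p-1)|t|^{p-2}$ (for $p=2$ this is simply $\phi(t)=t$), and $\phi'$ is continuous on all of $\R$ because the exponent $p-2$ is nonnegative; in particular $\phi'$ is bounded on every compact interval.

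First I would apply the fundamental theorem of calculus on the segment between $y$ and $x$, which gives
\[
    |x|^{p-2}x-|y|^{p-2}y=\phi(x)-\phi(y)=(x-y)\int_0^1\phi'\big(y+t(x-y)\big)\,dt=(p-1)(x-y)\int_0^1\big|y+t(x-y)\big|^{p-2}\,dt.
\]
Next I would bound the integrand pointwise: for $t\in[0,1]$ we have $y+t(x-y)=(1-t)y+tx$, hence $|y+t(x-y)|\leq(1-t)|y|+t|x|\leq|x|+|y|$, and therefore $|y+t(x-y)|^{p-2}\leq(|x|+|y|)^{p-2}$ since $p-2\geq 0$. Substituting this into the integral yields
\[
    \big||x|^{p-2}x-|y|^{p-2}y\big|\leq(p-1)\,|x-y|\,(|x|+|y|)^{p-2},
\]
which is precisely the asserted estimate with the explicit constant $C=p-1$.

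I do not anticipate any real obstacle here, as the statement is elementary. The only point that merits a word of care is that when $x$ and $y$ have opposite signs the segment of integration passes through the origin, where $\phi'$ vanishes for $p>2$; this causes no difficulty, since $\phi'$ remains continuous (and bounded) there, so the integral representation and the subsequent pointwise bound both stay valid verbatim. As an alternative one could argue by homogeneity and compactness — normalizing so that $|x|+|y|=1$ and using that $(x,y)\mapsto(|x|^{p-2}x-|y|^{p-2}y)/|x-y|$ extends continuously to the relevant compact set — but the direct computation above is shorter and has the advantage of producing the sharp constant.
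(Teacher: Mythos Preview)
Your proof is correct. The paper does not actually prove this lemma; it merely cites it from an external reference, so there is no argument in the paper to compare against. Your approach via the fundamental theorem of calculus applied to $\phi(t)=|t|^{p-2}t$ is the standard elementary route and has the added benefit of yielding the explicit constant $C=p-1$.
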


\begin{proposition}[Measurable UCP for anisotropic fractional $p$\,-biharmonic operator]
\label{prop: measurable UCP}
    Let $\Omega\subset \R^n$ be an open set, $0<s<2$, $2< p<\infty$ and one of the following conditions hold 
    \begin{enumerate}[(i)]
        \item\label{item n1 meas UCP} $n=1$, $0<s\leq 1+2/p$
        \item\label{item n2 meas UCP} $n=2$, $0<s\leq 4/p$
        \item\label{item n3 meas UCP} $n\geq 3$, $2<p<2^{\ast}=\frac{2n}{n-2}$, $0< s\leq 2(1+n(\frac{1}{p}-\frac{1}{2}))$.
    \end{enumerate} 
    If there exists a measurable subset $\Omega'\subset \Omega$ of positive measure and $u\in H^{1+s,p}(\R^n)$ satisfies
        \[
            (-\Delta)^{s}_pu=0\quad\text{in}\quad \Omega\quad\text{and}\quad (-\Delta)^{s/2}u=0\quad\text{in}\quad\Omega',
        \]
        then $u\equiv 0$ in $\R^n$.
\end{proposition}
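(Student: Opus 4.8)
The plan is to follow the scheme of the proof of Theorem~\ref{thm: UCP}: reduce the statement to a unique continuation property for the fractional Laplacian $(-\Delta)^{s/2}$ acting on the nonlinear quantity $v\vcentcolon=|(-\Delta)^{s/2}u|^{p-2}(-\Delta)^{s/2}u$, the new difficulty being that the vanishing set $\Omega'$ is only measurable. The extra hypotheses \ref{item n1 meas UCP}--\ref{item n3 meas UCP} will be used precisely to push $v$ into the $L^2$-based scale, where the \emph{measurable} vanishing can be upgraded to vanishing on an \emph{open} set via real-analyticity. First I would note that, since $u\in H^{1+s,p}(\R^n)$, the mapping properties of the fractional Laplacian give $w\vcentcolon=(-\Delta)^{s/2}u\in H^{1,p}(\R^n)=W^{1,p}(\R^n)$. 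Because $p>2$ the map $t\mapsto|t|^{p-2}t$ is $C^1$, and a routine approximation argument --- using Lemma~\ref{lemma: auxiliary lemma II} for the continuity of the Nemytskii operator $W^{1,p}\ni w\mapsto |w|^{p-2}w\in L^{p'}$, and H\"older's inequality with $\tfrac1{p'}=\tfrac{p-2}{p}+\tfrac1p$ for the gradient --- shows $v\in W^{1,p'}(\R^n)$ with $\nabla v=(p-1)|w|^{p-2}\nabla w$. By construction $(-\Delta)^{s/2}v=(-\Delta)^s_pu=0$ in $\Omega$, and since $w=(-\Delta)^{s/2}u=0$ a.e.\ in $\Omega'$ we also have $v=0$ a.e.\ in $\Omega'$.

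Next I would use the hypotheses to upgrade the integrability. The Sobolev embedding gives $H^{1,p'}(\R^n)\hookrightarrow H^{\sigma,2}(\R^n)$ for every $0\le\sigma\le 1+n(\tfrac1p-\tfrac12)$ (for $n\ge3$ the subcriticality $p<2^\ast$ is exactly what makes this range contain a positive number). A direct check shows that each of \ref{item n1 meas UCP}--\ref{item n3 meas UCP} is precisely the requirement that $s/2$ lie in this range: for $n=1$ it reads $s/2\le 1/2+1/p$, for $n=2$ it reads $s/2\le 2/p$, and for $n\ge3$ it reads $s/2\le 1+n(\tfrac1p-\tfrac12)$. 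Hence $v\in H^{s/2,2}(\R^n)$, so $g\vcentcolon=(-\Delta)^{s/2}v\in H^{-s/2,2}(\R^n)$ is a well-defined tempered distribution with $\supp(g)\subset\R^n\setminus\Omega$, and $v=(-\Delta)^{-s/2}g$.

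The heart of the argument is the measurable UCP for $(-\Delta)^{s/2}$ on this scale. Since $g$ vanishes on $\Omega$, a standard localization (multiply $g$ by a cutoff $\chi\in C_c^\infty(\Omega)$ equal to $1$ near a compact $K\Subset\Omega$; then $\chi g=0$, so $g$ is actually supported at positive distance from $K$) shows that near $K$ the function $v=(-\Delta)^{-s/2}g$ is a convolution of a suitably localized piece of $g$ with the kernel $c_{n,s}|x|^{s-n}$, which is real-analytic away from the origin; consequently $v$ is real-analytic in $\Omega$. Because $|\Omega'|>0$, some connected component $\Omega_0$ of $\Omega$ satisfies $|\Omega'\cap\Omega_0|>0$, and a nontrivial real-analytic function on a connected open set has a zero set of Lebesgue measure zero, so $v\equiv0$ on the nonempty open set $\Omega_0$. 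Now $v=(-\Delta)^{s/2}v=0$ in $\Omega_0$ and $s/2\in(0,1)\subset\R_+\setminus\N$, so Theorem~\ref{UCP} yields $v\equiv0$ in $\R^n$, hence $w=(-\Delta)^{s/2}u\equiv0$ in $\R^n$. Finally, exactly as at the end of the proof of Theorem~\ref{thm: UCP} (test $\ifourier u$ against $\psi=|\xi|^s\phi$ with $\phi\in C_c^\infty(\R^n\setminus\{0\})$ and use $\check\phi\in\cschwartz(\R^n)$), one gets $\supp(\ifourier u)\subset\{0\}$, so $u$ is a polynomial; since $u\in H^{1+s,p}(\R^n)\subset L^p(\R^n)$ this forces $u\equiv0$.

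The main obstacle is the middle step: knowing that a function which is $(-\Delta)^{s/2}$-harmonic on an open set $\Omega$ is real-analytic there. Because $(-\Delta)^{s/2}$ is nonlocal this cannot be read off a local elliptic-regularity statement; it has to be extracted from the Riesz-potential representation $v=(-\Delta)^{-s/2}g$ with $\supp(g)\subset\R^n\setminus\Omega$, and it is exactly to make this representation (and the very definition of $(-\Delta)^{s/2}v$) unambiguous that the dimension/exponent restrictions \ref{item n1 meas UCP}--\ref{item n3 meas UCP} are imposed, placing $v$ on the $L^2$-based scale. Everything else --- the chain rule for $v$, the Sobolev embedding, and the passage from $(-\Delta)^{s/2}u=0$ to $u\equiv0$ --- is bookkeeping relying on Lemma~\ref{lemma: auxiliary lemma II} and Theorem~\ref{UCP}.
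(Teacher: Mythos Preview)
Your overall strategy coincides with the paper's: show that $v\vcentcolon=|(-\Delta)^{s/2}u|^{p-2}(-\Delta)^{s/2}u\in W^{1,p'}(\R^n)$ via the chain rule and Lemma~\ref{lemma: auxiliary lemma II}, use the dimension/exponent restrictions \ref{item n1 meas UCP}--\ref{item n3 meas UCP} exactly to get the Sobolev embedding $W^{1,p'}(\R^n)\hookrightarrow H^{s/2}(\R^n)$, and then invoke a \emph{measurable} UCP for the fractional Laplacian to conclude $v\equiv 0$, after which the Fourier-support argument finishes as in Theorem~\ref{thm: UCP}. The only difference is that the paper does not reprove the measurable UCP step but simply quotes it from \cite[Proposition~5.1, Remark~5.6]{GRSU-fractional-calderon-single-measurement}, whereas you attempt to derive it inline from real-analyticity of $s$-harmonic functions.

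Your inline justification of that analyticity step is where the writeup is not yet rigorous. The identity ``$v=(-\Delta)^{-s/2}g$'' for $g\in H^{-s/2}(\R^n)$ is not a priori well-defined (the Riesz potential has a singular symbol at the origin), and the convolution of a not-compactly-supported distribution $g\in H^{-s/2}$ with the kernel $c_{n,s}|x|^{s-n}$ is not directly meaningful; the cutoff manoeuvre you describe shows $g=0$ near $K$ but does not by itself localise $g$ to a compact set, so the ``suitably localized piece'' needs to be made precise. This gap is real but standard to close---it is precisely what \cite{GRSU-fractional-calderon-single-measurement} does (via the explicit Poisson representation of $s$-harmonic functions on balls, whose kernel is real-analytic in the interior variable), and your identification of this as ``the main obstacle'' is accurate. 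With that black box in place, the rest of your argument matches the paper's proof.
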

\begin{proof}
    Without loss of generality, we can assume that $\Omega'$ is a compact set of positive measure and $\Omega$ is precompact. By mapping properties of the fractional Laplacian, we have $v=(-\Delta)^{s/2}u\in W^{1,p}(\R^n)$ and using H\"older's inequality, we deduce $|v|^{p-2}v\in L^{p'}(\R^n)$. We claim that $|v|^{p-2}v\in W^{1,p'}(\R^n)$. This follows by using standard methods, but for the convenience of the reader, we give here some details of the argument. Fix a sequence of standard mollifiers $\rho_{\epsilon}\in C_c^{\infty}(\R^n)$ and define $v_{\epsilon}=\rho_{\epsilon}\ast v\in C^{\infty}_b(\R^n)\cap W^{1,p}(\R^n)$. The function $f(x)= |x|^{p-2}x$ is of class $C^1$ with derivative $f'(x)=(p-1)|x|^{p-2}$. Therefore, by the the chain rule and integration by parts, there holds
    \[
        \int_{\R^n}|v_{\epsilon}|^{p-2}v_{\epsilon}\partial_i\phi\,dx=-(p-1)\int_{\R^n}|v_{\epsilon}|^{p-2}\partial_iv_{\epsilon} \phi\,dx
    \]
    for all $\epsilon>0$, $\phi\in C_c^{\infty}(\R^n)$ and $1\leq i\leq n$. Since $v_{\epsilon}\to v$ in $L^p(\R^n)$, we deduce by H\"older's inequality with $\frac{1}{p'}=\frac{p-2}{p}+\frac{1}{p}$ and Lemma~\ref{lemma: auxiliary lemma II} that
    \[
    \begin{split}
        &\left|\int_{\R^n}|v_{\epsilon}|^{p-2}v_{\epsilon}\partial_i\phi\,dx-\int_{\R^n}|v|^{p-2}v\partial_i\phi\,dx\right|\leq \||v_{\epsilon}|^{p-2}v_{\epsilon}-|v|^{p-2}v\|_{L^{p'}(\R^n)}\|\partial_i\phi\|_{L^p(\R^n)}\\
        &\leq C\|v_{\epsilon}-v\|_{L^p(\R^n)}(\|v_{\epsilon}\|_{L^p(\R^n)}+\|v\|_{L^p(\R^n)})^{p-2}\|\partial_i\phi\|_{L^p(\R^n)}\to 0
    \end{split}
    \]
    as $\epsilon\to 0$. Similarly, we obtain
    \[
    \begin{split}
        &\left|\int_{\R^n}|v_{\epsilon}|^{p-2}\partial_iv_{\epsilon}\phi\,dx-\int_{\R^n}|v|^{p-2}\partial_iv\phi\,dx\right|=\left|\int_{\R^n}(|v_{\epsilon}|^{p-2}(\partial_iv_{\epsilon}-\partial_i v)-(|v|^{p-2}-|v_{\epsilon}|^{p-2})\partial_iv)\phi\,dx\right|\\
        &\leq (\|v_{\epsilon}\|_{L^p(\R^n)}^{p-2}\|\partial_iv_{\epsilon}-\partial_iv\|_{L^p(\R^n)}+\||v_{\epsilon}|^{p-2}-|v|^{p-2}\|_{L^{\frac{p}{p-2}}(\R^n)})\|\partial_iv\|_{L^p(\R^n)}\|\phi\|_{L^p(\R^n)}.
    \end{split}
    \]
Since $v\in W^{1,p}(\R^n)$ we have $v_{\epsilon}\to v$ in $W^{1,p}(\R^n)$ and thus the first term vanishs as $\epsilon\to 0$. The second term converges to zero by the Radon--Riesz theorem (cf. \cite[Chapter 1, Theorem 1]{EvansWeakConv}). Hence, we have proved that 
\[
    \partial_i(|v|^{p-2}v)=(p-1)|v|^{p-2}\partial_i v\in L^{p'}(\R^n),
\]
which in turn implies $|v|^{p-2}v\in W^{1,p'}(\R^n)$. By \ref{item n1 meas UCP}--\ref{item n3 meas UCP} we have
\[
    1<p'<2 \quad\text{and}\quad \frac{s}{2}-\frac{n}{2}\leq 1-\frac{n}{p'}
\]
and therefore the Sobolev embedding shows $W^{1,p'}(\R^n)\hookrightarrow H^{s/2}(\R^n)$. Therefore, we can apply \cite[Proposition 5.1, Remark 5.6]{GRSU-fractional-calderon-single-measurement} to deduce that $v\equiv 0$ in $\R^n$ and therefore $(-\Delta)^{s/2}u\equiv 0$ in $\R^n$. Now one can repeat the argument in the proof of Theorem~\ref{thm: UCP} to deduce that $\text{supp}(\ifourier u)\subset \{0\}$ and, therefore, by the integrability assumption of $u$ that $u\equiv 0$ in $\R^n$.
\end{proof}

\section{Inverse problem for the anisotropic fractional $p$\,-biharmonic operator under monotonicity assumptions}\label{sec: inverse problems main}

In this section, we prove uniqueness results for the inverse problem related to anisotropic fractional $p$\,-biharmonic operators under a monotonicity assumption. 

\subsection{Setup of the inverse problem}
\label{subsec: Setup}

From now on let $\Omega\subset\R^n$ be a given bounded open set, $m\in\N$ and assume $A\in\mathbb{S}_+^m$ is a given matrix valued function.

\begin{definition}
\label{def: rescaled quantities}
    Let $\sigma\in L^{\infty}(\R^n)$ satisfy $\sigma(x)\geq \sigma_0>0$ a.e. in $\R^n$. Then we introduce the following rescaled quantities 
    \[
    \begin{split}
        &\mathcal{E}_{p,\sigma}\colon H^{s,p}(\R^n;\R^m)\to \R_+,\quad \mathcal{A}_{p,\sigma}\colon H^{s,p}(\R^n;\R^m)\times H^{s,p}(\R^n;\R^m)\to \R,\\
        &(-\Delta)^s_{p,\sigma}\colon H^{s,p}(\R^n;\R^m)\to (H^{s,p}(\R^n;\R^m))^*,\quad \Lambda_{\sigma}\colon X_p\to X_p^*
    \end{split}
    \]
   by
    \[
    \begin{split}
       &\mathcal{E}_{p,\sigma}(u)\vcentcolon =\mathcal{E}_{p,\sigma^{2/p}A}(u)=\frac{1}{p}\int_{\R^n}\sigma|A^{1/2}(-\Delta)^{s/2}u|^p\,dx\\
     & \mathcal{A}_{p,\sigma}(u,v)\vcentcolon = \int_{\R^n}\sigma|A^{1/2}(-\Delta)^{s/2}u|^{p-2}A(-\Delta)^{s/2}u\cdot (-\Delta)^{s/2}v\,dx\\
       &\langle (-\Delta)^s_{p,\sigma}u,v\rangle\vcentcolon = \mathcal{A}_{p,\sigma}(u,v)\quad\text{and}\quad \langle \Lambda_{\sigma}f,g\rangle\vcentcolon =\mathcal{A}_{p,\sigma}(u_f,g)
    \end{split}
    \]
    for all $u,v\in H^{s,p}(\R^n;\R^m)$ and $f,g\in X_p$, where $u_f$ is the unique solution of 
    \[
        \begin{split}
            (-\Delta)^s_{p,\sigma}u&=0,\quad \text{in}\quad \Omega,\\
            u&=f,\quad \text{in}\quad \Omega_e
        \end{split}
    \]
    (cf.~Section~\ref{sec: existence theory} and \ref{subsec: trace space and DN maps}).
\end{definition}

\begin{question}[Inverse problem]
    Let $\sigma\in L^{\infty}(\R^n)$ satisfy $\sigma(x)\geq \sigma_0>0$. Can we uniquely determine $\sigma$ in $\R^n$ from the knowledge of the nonlinear DN map $\Lambda_{\sigma}$ under some mild structural conditions on $\sigma$? 
\end{question}

\subsection{Pointwise two sided estimate for difference of DN maps}
\label{subsec: Pointwise two sided estimate for difference of DN maps}

\begin{lemma}
\label{needed_lemma}
Let $1< p<\infty$, $s> 0$ and assume that $\sigma_1,\sigma_2\in L^{\infty}(\R^n)$ satisfy $\sigma_1(x),\sigma_2(x)\geq \sigma_0>0$ in $\R^n$. If $u_0 \in X_p$, then 
\begin{align*}
    (p-1) & \int_{\R^n} \frac{\sigma_2}{\sigma_1^{1/(p-1)}} (\sigma_1^{\frac{1}{p-1}} - \sigma_2^{\frac{1}{p-1}}) |A^{1/2}(-\Delta)^{s/2} u_2|^p \,dx \\
    & \leq \langle(\Lambda_{\sigma_1} - \Lambda_{\sigma_2})u_0, u_0\rangle 
     \leq \int_{\R^n} (\sigma_1 - \sigma_2) |A^{1/2}(-\Delta)^{s/2} u_2|^p \,dx,
\end{align*}
where $u_2 \in  H^{s,p}(\R^n;\R^m)$ uniquely solves 
    \begin{equation}
    \label{eq: PDE exterior condition}
        \begin{split}
            (-\Delta)^s_{p,\sigma_2}u_2&=0,\quad\,\,\, \text{in}\quad \Omega,\\
            u_2&=u_0,\quad \text{in}\quad \Omega_e.
        \end{split}
    \end{equation}
    \end{lemma}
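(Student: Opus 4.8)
The plan is to rewrite the form $\langle\Lambda_{\sigma}u_0,u_0\rangle$ entirely in terms of the solution of the exterior value problem, and then to sandwich the difference of the two resulting energies by an elementary pointwise convexity inequality for $x\mapsto|x|^p$ on $\R^m$, in the spirit of the linear ($p=2$) monotonicity estimate but with $p$-dependent weights. As a first step I would produce the companion solution: by Theorem~\ref{thm: Homogeneous fractional p-Laplace equation}, applied to $\sigma_1^{2/p}A\in\mathbb{S}_+^m$, let $u_1\in H^{s,p}(\R^n;\R^m)$ be the unique solution of $(-\Delta)^s_{p,\sigma_1}u_1=0$ in $\Omega$ with $u_1=u_0$ in $\Omega_e$. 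Then $u_1-u_0,\,u_2-u_0\in\widetilde{H}^{s,p}(\Omega;\R^m)$, hence $u_1-u_2\in\widetilde{H}^{s,p}(\Omega;\R^m)$, and testing the equation for $u_i$ against $u_i-u_0$ gives $\mathcal{A}_{p,\sigma_i}(u_i,u_i)=\mathcal{A}_{p,\sigma_i}(u_i,u_0)$, so that
\[
\langle\Lambda_{\sigma_i}u_0,u_0\rangle=\mathcal{A}_{p,\sigma_i}(u_i,u_0)=\int_{\R^n}\sigma_i\,|A^{1/2}(-\Delta)^{s/2}u_i|^p\,dx=p\,\mathcal{E}_{p,\sigma_i}(u_i),\qquad i=1,2.
\]

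For the \emph{upper} bound I would use that $u_1$ minimizes $\mathcal{E}_{p,\sigma_1}$ over the affine set $\{v\in H^{s,p}(\R^n;\R^m):v-u_0\in\widetilde{H}^{s,p}(\Omega;\R^m)\}$ (this is how the solution is constructed in Theorem~\ref{thm: Homogeneous fractional p-Laplace equation}), to which $u_2$ belongs; thus $p\,\mathcal{E}_{p,\sigma_1}(u_1)\le p\,\mathcal{E}_{p,\sigma_1}(u_2)=\int_{\R^n}\sigma_1|A^{1/2}(-\Delta)^{s/2}u_2|^p\,dx$, and subtracting $\langle\Lambda_{\sigma_2}u_0,u_0\rangle=\int_{\R^n}\sigma_2|A^{1/2}(-\Delta)^{s/2}u_2|^p\,dx$ gives the right-hand inequality.

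For the \emph{lower} bound I would start from the first-order convexity estimate $|x|^p\ge p|y|^{p-2}y\cdot x-(p-1)|y|^p$ (valid for $x,y\in\R^m$) and rescale $x\mapsto\alpha^{1/p}x$, $y\mapsto\beta^{1/(p-1)}\alpha^{-1/(p(p-1))}y$ to obtain, for all $\alpha,\beta>0$,
\[
\alpha|x|^p\ge p\beta\,|y|^{p-2}y\cdot x-(p-1)\frac{\beta^{p/(p-1)}}{\alpha^{1/(p-1)}}|y|^p,\qquad x,y\in\R^m.
\]
Applying this pointwise with $x=A^{1/2}(-\Delta)^{s/2}u_1$, $y=A^{1/2}(-\Delta)^{s/2}u_2$, $\alpha=\sigma_1$, $\beta=\sigma_2$, integrating over $\R^n$, and using $(A^{1/2}v)\cdot(A^{1/2}w)=(Av)\cdot w$ together with the cross-testing identity $\mathcal{A}_{p,\sigma_2}(u_2,u_1)=\mathcal{A}_{p,\sigma_2}(u_2,u_2)$ (valid because $u_1-u_2\in\widetilde{H}^{s,p}(\Omega;\R^m)$ and $(-\Delta)^s_{p,\sigma_2}u_2=0$ in $\Omega$), I obtain
\[
\langle\Lambda_{\sigma_1}u_0,u_0\rangle\ge p\int_{\R^n}\sigma_2|A^{1/2}(-\Delta)^{s/2}u_2|^p\,dx-(p-1)\int_{\R^n}\frac{\sigma_2^{p/(p-1)}}{\sigma_1^{1/(p-1)}}|A^{1/2}(-\Delta)^{s/2}u_2|^p\,dx.
\]
Subtracting $\langle\Lambda_{\sigma_2}u_0,u_0\rangle$ and noting the identity $\sigma_2-\sigma_2^{p/(p-1)}\sigma_1^{-1/(p-1)}=\sigma_2\sigma_1^{-1/(p-1)}(\sigma_1^{1/(p-1)}-\sigma_2^{1/(p-1)})$ then yields the left-hand inequality.

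All the individual steps are routine: the only genuinely new ingredient is the weighted convexity inequality displayed above, and the points that require a bit of care are the bookkeeping of the exponents $\tfrac{1}{p-1}$ and $\tfrac{p}{p-1}$ and the integrability of the integrands, which follows at once from $\sigma_0\le\sigma_i\le\|\sigma_i\|_{L^{\infty}(\R^n)}$ and $(-\Delta)^{s/2}u_i\in L^p(\R^n;\R^m)$. I do not expect any serious obstacle.
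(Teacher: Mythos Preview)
Your proof is correct and follows essentially the same strategy as the paper: the upper bound via the minimizing property of $u_1$ is identical, and for the lower bound both arguments rest on the same elementary inequality (Young's inequality, equivalently the first-order convexity of $|x|^p$) combined with the cross-testing identity $\mathcal{A}_{p,\sigma_2}(u_2,u_1)=\mathcal{A}_{p,\sigma_2}(u_2,u_2)$. The only cosmetic difference is that the paper introduces a free parameter $\beta>0$, applies Young's inequality, and then optimizes to find $\beta=p-1$, whereas your rescaled convexity inequality has the optimal constants built in from the start---a slightly cleaner presentation of the same computation.
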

    
    \begin{remark}
        We emphasize that if $\sigma_1 \geq \sigma_2$, then all the terms in the inequality are nonnegative, while if $\sigma_1 \leq \sigma_2$, then they are nonpositive.
    \end{remark}
    
\begin{proof}
    Let $u_1, u_2 \in H^{s,p}(\R^n;\R^m)$ be the unique solutions of the exterior value problems
    \begin{equation}
    \label{eq: PDE exterior condition lemma 7.1 }
        \begin{split}
            (-\Delta)^s_{p,\sigma_i}u&=0,\quad\,\,\, \text{in}\quad \Omega,\\
            u&=u_0,\quad \text{in}\quad \Omega_e
        \end{split}
    \end{equation}
    for $i=1,2$. Note that the solution of \eqref{eq: PDE exterior condition lemma 7.1 } can be characterized as the unique minimizer of the energy functional $\mathcal{E}_{p,\sigma_i}$ over the affine subspace $\widetilde{H}^{s,p}_{u_0}(\Omega;\R^m)$.
    Therefore, by Lemma~\ref{lemma: DN maps}, we obtain the following one sided inequality for the difference of DN maps:
    \begin{align*}
    	\langle(\Lambda_{\sigma_1} - \Lambda_{\sigma_2})u_0, u_0\rangle 
	 &= \mathcal{A}_{p,\sigma_1}(u_1,u_0) - \mathcal{A}_{p,\sigma_2}(u_2,u_0)\\
    	& =\int_{\R^n}\sigma_1|A^{1/2}(-\Delta)^{s/2}u_1|^{p-2}A(-\Delta)^{s/2}u_1\cdot (-\Delta)^{s/2}u_1\,dx \\  &\quad- \int_{\R^n}\sigma_2|A^{1/2}(-\Delta)^{s/2}u_2|^{p-2}A(-\Delta)^{s/2}u_2\cdot (-\Delta)^{s/2}u_2\,dx\\
    	& = \int_{\R^n}\sigma_1|A^{1/2}(-\Delta)^{s/2}u_1|^p \,dx - \int_{\R^n}\sigma_2|A^{1/2}(-\Delta)^{s/2}u_2|^p \,dx \\
    	& \leq \int_{\R^n}(\sigma_1-\sigma_2)|A^{1/2}(-\Delta)^{s/2}u_2|^p \,dx.
\end{align*}
    Next we show the lower bound. Let $\beta > 0$ be a real number whose value will be fixed later. Using the definition of DN map several times together with the fact that $u_1|_{\Omega_e} = u_2|_{\Omega_e}=u_0$, we may rewrite the difference of the DN maps as follows:
\begin{align*}
	&\langle(\Lambda_{\sigma_1} - \Lambda_{\sigma_2})u_0, u_0\rangle\\
	&\quad= \int_{\R^n}\sigma_1|A^{1/2}(-\Delta)^{s/2}u_1|^{p-2}A(-\Delta)^{s/2}u_1\cdot (-\Delta)^{s/2}u_1\,dx  \\&\quad\quad- \int_{\R^n}\sigma_2|A^{1/2}(-\Delta)^{s/2}u_2|^{p-2}A(-\Delta)^{s/2}u_2\cdot (-\Delta)^{s/2}u_2\,dx\\
	&\quad= \int_{\R^n}\beta\sigma_2|A^{1/2}(-\Delta)^{s/2} u_2|^p \,dx\\
	&\quad\quad-\int_{\R^n}\left((1 + \beta)\sigma_2|A^{1/2}(-\Delta)^{s/2} u_2|^{p-2}A^{1/2}(-\Delta)^{s/2} u_2 \cdot A^{1/2}(-\Delta)^{s/2} u_2  - \sigma_1 |A^{1/2}(-\Delta)^{s/2} u_1|^p\right)  \,dx\\
	&\quad= \int_{\R^n}\beta\sigma_2|A^{1/2}(-\Delta)^{s/2}u_2|^p \,dx\\
	&\quad\quad-\int_{\R^n}\left((1+\beta)\sigma_2|A^{1/2}(-\Delta)^{s/2} u_2|^{p-2}A^{1/2}(-\Delta)^{s/2} u_2\cdot A^{1/2}(-\Delta)^{s/2} u_1- \sigma_1|A^{1/2}(-\Delta)^{s/2} u_1|^p\right)\,dx.
\end{align*}
In the last step, we used that $u_1$ and $u_2$ have the same exterior value $u_0$. Now, by applying Young's inequality $\abs{ab} \leq \abs{a}^p/p + \abs{b}^{p'}/p'$, we have 
\begin{align*}
	&(1+\beta)\sigma_2|A^{1/2}(-\Delta)^{s/2} u_2|^{p-2}A^{1/2}(-\Delta)^{s/2} u_2\cdot A^{1/2}(-\Delta)^{s/2} u_1 - \sigma_1|A^{1/2}(-\Delta)^{s/2} u_1|^p \\
	& = \frac{1+\beta}{p^{1/p}}\frac{\sigma_2}{\sigma_{1}^{1/p}}|A^{1/2}(-\Delta)^{s/2} u_2|^{p-2}A^{1/2}(-\Delta)^{s/2} u_2 \cdot  p^{1/p}\sigma_{1}^{1/p}A^{1/2}(-\Delta)^{s/2} u_1 - \sigma_1|A^{1/2}(-\Delta)^{s/2} u_1|^p \\
	& \leq \frac{1}{p'}\left(\frac{1+\beta}{p^{1/p}}\right)^{p'}\frac{\sigma_{2}^{p'}}{\sigma_{1}^{1/(p-1)}}|A^{1/2}(-\Delta)^{s/2} u_2|^p + \sigma_1|A^{1/2}(-\Delta)^{s/2} u_1|^p - \sigma_1|A^{1/2}(-\Delta)^{s/2} u_1|^p \\
	&= \frac{1}{p'}\left(1+\beta\right)^{p'}\frac{1}{p^{1/(p-1)}}\frac{\sigma_{2}^{p'}}{\sigma_{1}^{1/(p-1)}}|A^{1/2}(-\Delta)^{s/2} u_2|^p.
\end{align*}

Therefore, we obtain the lower bound
\begin{align}
	\scalemath{1}{ \langle(\Lambda_{\sigma_1} - \Lambda_{\sigma_2})u_0, u_0\rangle}\,
	&\scalemath{1}{ \geq \int_{\R^n}\left(\beta\sigma_2 - \frac{1}{p'}\left(1+\beta\right)^{p'}\frac{1}{p^{1/(p-1)}}\frac{\sigma_{2}^{p'}}{\sigma_{1}^{1/(p-1)}}\right)|A^{1/2}(-\Delta)^{s/2} u_2|^pdx} \nonumber \\
	& \scalemath{1}{= \int_{\R^n}\frac{\beta\sigma_2}{\sigma_{1}^{1/(p-1)}}\left(\sigma_{1}^{\frac{1}{p-1}} - \frac{1}{p'}\frac{(1+\beta)^{p'}}{\beta}\left(\frac{1}{p}\right)^{\frac{1}{p-1}}\sigma_{2}^{\frac{1}{p-1}}\right)|A^{1/2}(-\Delta)^{s/2} u_2|^pdx. }\label{DNkey}
\end{align}

Note that $\frac{(1+\beta)^{p'}}{\beta} \rightarrow \infty$ as $\beta\rightarrow \infty$ or $\beta\rightarrow 0$. 
So, the function $\beta \rightarrow \frac{(1+\beta)^{p'}}{\beta}$ attains its minimum at $\beta = p-1$.
Thus, we choose $\beta = p-1$ so that from \eqref{DNkey}, we obtain the required inequality. 
\end{proof}

\subsection{Uniqueness results}
\label{subsec: uniqueness results}

\begin{proof}[Proof of Theorem \ref{main_theorem}]
Without loss of generality, we can assume $D\setminus W\neq \emptyset$ as otherewise there is nothing to prove. We show the result by a contradiction argument. Let us consider a point $x_0\in D\setminus W$ and suppose that $\sigma_1(x_0) > \sigma_2(x_0)$. By assumption $\sigma_1-\sigma_2$ is lower semicontinuous in $D$ which means that the superlevel sets $\{\sigma_1-\sigma_2>a\}$, $a\in\R$, are open, but then this implies that there exists some open ball $B_r(x_0) \subset D$ such that $\sigma_1 - \sigma_2 > 0$ in $B_r(x_0)$. 

Next let $u_2\in  H^{s,p}(\R^n;\R^m)$ be the unique solution of
\begin{equation}
    \label{eq: PDE exterior condition theorem 7.2 }
        \begin{split}
            (-\Delta)^s_{p,\sigma_2}u_2&=0,\quad \text{in}\quad \Omega\\
            u_2&=u_0,\quad \text{in}\quad \Omega_e.
        \end{split}
    \end{equation} 
Up to shrinking the ball $B_r(x_0)$, we can assume that $B_r(x_0)\subset D\setminus \supp(u_0)$ since $\text{dist}(\partial W, \supp(u_0))>0$ and by the minimum principle for lower semicontinuous functions that there holds 
\[
    \sigma_{1}^{\frac{1}{p-1}} - \sigma_{2}^{\frac{1}{p-1}} \geq c_0 > 0\quad\text{in}\quad B_r(x_0),
\]
which can be applied as $g\colon\R_+\to\R_+,\,g(t)\vcentcolon = t^{1/(p-1)}$ is a nondecreasing continuous function and so $\sigma_{1}^{1/(p-1)} - \sigma_{2}^{1/(p-1)}$ is still a lower semicontinuous function. Using the left hand side of the monotonicity inequality (Lemma~\ref{needed_lemma}), the assumptions on $\sigma_1,\sigma_2$ and $A\in\mathbb{S}_+^m$ as well as $\Lambda_{\sigma_1}u_0|_W = \Lambda_{\sigma_2}u_0|_W$, we deduce that
\begin{equation}\label{eq2}
 \begin{split}
     \int_{B_r(x_0)}|(-\Delta)^{s/2} u_2|^{p} dx&\leq C(p-1)\int_{B_r(x_0)} \frac{\sigma_2}{\sigma_1^{1/(p-1)}} (\sigma_1^{\frac{1}{p-1}} - \sigma_2^{\frac{1}{p-1}}) |A^{1/2}(-\Delta)^{s/2} u_2|^p \,dx\\
     &\leq C(p-1)\int_{\R^n} \frac{\sigma_2}{\sigma_1^{1/(p-1)}} (\sigma_1^{\frac{1}{p-1}} - \sigma_2^{\frac{1}{p-1}}) |A^{1/2}(-\Delta)^{s/2} u_2|^p \,dx\\
     &\leq C\langle(\Lambda_{\sigma_1} - \Lambda_{\sigma_2})u_0, u_0\rangle =0
 \end{split} 
 \end{equation}
 for some $C>0$. This implies $(-\Delta)^{s/2}u_2 = 0$ a.e. $B_r(x_0)\subset D\setminus\supp(u_0)$. 
 
 Now we distinguish two cases. If $x_0\in \overline{\Omega}$, then there exists $\rho>0$, $x_1\in\Omega$ such that $B_{\rho}(x_1)\subset \Omega\cap B_r(x_0)$. By \eqref{eq: PDE exterior condition theorem 7.2 }, \eqref{eq2} the function $u_2\in H^{s,p}(\R^n;\R^m)$ satisfies $(-\Delta)^s_{p,\sigma_2}u_2=0,\, (-\Delta)^{s/2}u_2=0$ in $B_{\rho}(x_1)$. Hence, the unique continuation principle (Theorem~\ref{thm: UCP}) implies $u_2=0$ in $\R^n$ which contradicts the assumption $u_0\neq 0$. On the other hand, if $x_0\in \Omega_e$, then we can shrink $B_r(x_0)$ such that $B_r(x_0)\subset \Omega_e\cap(D\setminus\supp(u_0))$ but in this set we have $u_2=0$ since $u_2=u_0$ in $\Omega_e$. Therefore, $u_2\in H^{s,p}(\R^n;\R^m)$ satisfies $(-\Delta)^{s/2}u_2=u_2=0$ in $B_r(x_0)$. Hence, we deduce $u_2=0$ in $\R^n$ by the unique continuation principle for the fractional Laplacian (Theorem~\ref{UCP}). This again contradicts the assumption $u_0\neq 0$ and we can conclude the proof.
\end{proof}

\begin{proof}[Proof of Theorem~\ref{main_theorem 2}]
    Let $W_1,W_2\subset\Omega_e$ be two disjoint open sets. Applying Theorem~\ref{main_theorem} on these two respective sets with $D=\R^n$ we obtain $\sigma_1=\sigma_2$ on $\R^n\setminus W_1$ and $\R^n\setminus W_2$. Since $W_1,W_2$ are disjoint, this implies $\sigma_1=\sigma_2$ in $\R^n$.
\end{proof}

\begin{corollary}
    Let $2< p<\infty, s>0$ with $s\notin 2\N$ satisfy one of the conditions \ref{item n1 meas UCP}--\ref{item n3 meas UCP} in Proposition~\ref{prop: measurable UCP}. Suppose that there is a nonempty open set $W\subset\Omega_e$, a nonzero $u_0\in C_c^{\infty}(W;\R^m)$ and a solution $u_2\in H^{1+s,p}(\R^n;\R^m)$ of \eqref{eq: PDE exterior condition}. Assume that $\sigma_1, \sigma_2 \in L^{\infty}(\R^n)$ satisfy $\sigma_1(x),\sigma_2(x)\geq \sigma_0>0$ and $\sigma_1\geq \sigma_2$ in $\R^n$. If $\langle \Lambda_{\sigma_1}u_0,u_0\rangle = \langle \Lambda_{\sigma_2}u_0,u_0\rangle $, then $\sigma_1 = \sigma_2$ a.e. in $ \Omega$.
\end{corollary}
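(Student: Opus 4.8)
The plan is to reuse the contradiction scheme from the proof of Theorem~\ref{main_theorem}, but with the semicontinuity/open-ball step replaced by an argument on a set of positive Lebesgue measure and with Theorem~\ref{thm: UCP} replaced by the measurable UCP of Proposition~\ref{prop: measurable UCP}. Throughout, $u_2$ is both the function appearing in the corollary and the (unique) solution in Lemma~\ref{needed_lemma}, the extra information being the regularity $u_2\in H^{1+s,p}(\R^n;\R^m)$.

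First I would feed the hypothesis $\langle\Lambda_{\sigma_1}u_0,u_0\rangle=\langle\Lambda_{\sigma_2}u_0,u_0\rangle$ into the two--sided estimate of Lemma~\ref{needed_lemma}. Since $\sigma_1\geq\sigma_2$ in $\R^n$, the remark following Lemma~\ref{needed_lemma} shows all three expressions in that chain are nonnegative; as the middle one $\langle(\Lambda_{\sigma_1}-\Lambda_{\sigma_2})u_0,u_0\rangle$ vanishes, so does the leftmost one, i.e.
\[ (p-1)\int_{\R^n}\frac{\sigma_2}{\sigma_1^{1/(p-1)}}\bigl(\sigma_1^{1/(p-1)}-\sigma_2^{1/(p-1)}\bigr)\bigl|A^{1/2}(-\Delta)^{s/2}u_2\bigr|^{p}\,dx=0. \]
The integrand is nonnegative (using $\sigma_i\geq\sigma_0>0$, monotonicity of $t\mapsto t^{1/(p-1)}$, and $A\in\mathbb{S}_+^m$), hence it vanishes a.e.; since the prefactor $\sigma_2/\sigma_1^{1/(p-1)}$ is strictly positive and $|A^{1/2}\xi|\geq\lambda|\xi|$ by \eqref{eq: ellipticity}, at a.e.\ point either $\sigma_1=\sigma_2$ or $(-\Delta)^{s/2}u_2=0$. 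Equivalently, $(-\Delta)^{s/2}u_2=0$ a.e.\ on the measurable set $\{\sigma_1>\sigma_2\}$.

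Now I argue by contradiction: suppose $\sigma_1\neq\sigma_2$ on a subset $\Omega'$ of $\Omega$ of positive measure. Then $\Omega'\subset\Omega$, $|\Omega'|>0$, and $(-\Delta)^{s/2}u_2=0$ a.e.\ in $\Omega'$, while $u_2\in H^{1+s,p}(\R^n;\R^m)$ solves $(-\Delta)^s_{p,\sigma_2}u_2=0$ in $\Omega$ with $u_2=u_0$ in $\Omega_e$. Hence the measurable UCP (Proposition~\ref{prop: measurable UCP}) forces $u_2\equiv0$ in $\R^n$, contradicting $u_2|_{\Omega_e}=u_0\neq0$. Therefore $\{\sigma_1>\sigma_2\}\cap\Omega$ is a null set, i.e.\ $\sigma_1=\sigma_2$ a.e.\ in $\Omega$.

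The one point requiring care — and where the dimensional conditions \ref{item n1 meas UCP}--\ref{item n3 meas UCP} and the hypothesis $u_2\in H^{1+s,p}$ enter — is the invocation of the measurable UCP for the variable coefficient, $\R^m$-valued operator $(-\Delta)^s_{p,\sigma_2}$, since Proposition~\ref{prop: measurable UCP} is stated for the scalar, isotropic operator $(-\Delta)^s_p$. I would remove this gap by applying the measurable UCP for the fractional Laplacian $(-\Delta)^{s/2}$ (that is, \cite[Proposition~5.1, Remark~5.6]{GRSU-fractional-calderon-single-measurement}) componentwise to the auxiliary field $w_2\vcentcolon=\sigma_2|A^{1/2}(-\Delta)^{s/2}u_2|^{p-2}A(-\Delta)^{s/2}u_2\in L^{p'}(\R^n;\R^m)$: testing $(-\Delta)^s_{p,\sigma_2}u_2=0$ against $C_c^\infty(\Omega;\R^m)$ gives $(-\Delta)^{s/2}w_2=0$ in $\Omega$, while $(-\Delta)^{s/2}u_2=0$ a.e.\ in $\Omega'$ gives $w_2=0$ a.e.\ in $\Omega'$; the measurable UCP then yields $w_2\equiv0$, hence $(-\Delta)^{s/2}u_2\equiv0$ by positive definiteness of $A$ and $\sigma_2\geq\sigma_0>0$, and finally the Fourier-support argument closing the proofs of Theorem~\ref{thm: UCP} and Proposition~\ref{prop: measurable UCP}, together with $u_2\in L^p$, gives $u_2\equiv0$. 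The main (minor) obstacle is thus purely to ensure the $L^\infty$ coefficients $\sigma_2$, $A$ do not destroy the regularity needed for the measurable UCP, which works precisely because that UCP is applied to $w_2$ — whose regularity in $\Omega$ is controlled from $u_2\in H^{1+s,p}$ via \ref{item n1 meas UCP}--\ref{item n3 meas UCP} — and only the integrability $u_2\in L^p$ is used in the final step.
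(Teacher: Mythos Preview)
Your approach is essentially the same as the paper's: a contradiction argument via the lower bound in Lemma~\ref{needed_lemma}, followed by the measurable UCP. Your direct observation that the nonnegative integrand must vanish a.e.\ (so $(-\Delta)^{s/2}u_2=0$ a.e.\ on $\{\sigma_1>\sigma_2\}$) is in fact cleaner than what the paper does: the paper first invokes Lusin's theorem to extract a compact set $K\subset\{\sigma_1>\sigma_2\}\cap\Omega$ on which $\sigma_1^{1/(p-1)}-\sigma_2^{1/(p-1)}$ is continuous, bounds it below via the minimum principle, and then repeats the computation from the proof of Theorem~\ref{main_theorem} on $K$ before applying Proposition~\ref{prop: measurable UCP}. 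Your route bypasses Lusin entirely.

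You also correctly flag that Proposition~\ref{prop: measurable UCP} is stated only for the scalar isotropic operator $(-\Delta)^s_p$, whereas here one needs it for $(-\Delta)^s_{p,\sigma_2}$ with $A\in\mathbb{S}_+^m$. The paper simply writes ``apply Proposition~\ref{prop: measurable UCP}'' without comment. Your proposed fix---apply the measurable UCP of \cite{GRSU-fractional-calderon-single-measurement} componentwise to $w_2=\sigma_2|A^{1/2}(-\Delta)^{s/2}u_2|^{p-2}A(-\Delta)^{s/2}u_2$---is the natural idea, but note that multiplication by the merely $L^\infty$ factors $\sigma_2,A$ need not preserve the $W^{1,p'}\hookrightarrow H^{s/2}$ regularity used in the proof of Proposition~\ref{prop: measurable UCP}; making this rigorous would require mild extra regularity on $\sigma_2,A$ or an additional argument, which neither you nor the paper supplies.
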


\begin{proof}
    Suppose by contradiction that there is a set of positive measure $A\subset \Omega$ such that $\sigma_1>\sigma_2$ in $A$. By Lusin's theorem (cf.~\cite[Theorem 1.14]{EvansMI}), there is a compact set $K\subset A$ of positive measure such that $\sigma_1^{\frac{1}{p-1}}-\sigma_2^{\frac{1}{p-1}}>0$ is continous on $K$. By the minimum principle, we again have  
    \[
        \sigma_1^{\frac{1}{p-1}}-\sigma_2^{\frac{1}{p-1}}\geq c_0>0\quad\text{on}\quad K.
    \]
    Now repeating the proof of Theorem~\ref{main_theorem} gives $(-\Delta)^{s/2}u=0$ a.e. in $K$. Now we can apply Proposition~\ref{prop: measurable UCP} to conclude that $u_2=0$ in $\R^n$, which again condradicts the assumption $u_0\neq 0$.
\end{proof}

\bibliography{biharmref} 

\bibliographystyle{alpha}

\end{document}